\numberwithin{equation}{section}
\newtheorem{theorem_intro}{Theorem}
\newtheorem{theorem}{Theorem}[section]
\newtheorem{lemma}[theorem]{Lemma}
\newtheorem{proposition}[theorem]{Proposition}
\newtheorem{claim}[theorem]{Claim}
\theoremstyle{definition}
\newtheorem{definition}[theorem]{Definition}
\newtheorem{assumption}[theorem]{Assumption}
\theoremstyle{remark}
\newtheorem{examples}[theorem]{Examples}
\newtheorem{remark}[theorem]{Remark}
\newtheorem{observation}[theorem]{Observation}
\newcommand{\fI}{\mathfrak{I}}
\newcommand{\bA}{\mathbb{A}}
\newcommand{\bF}{\mathbb{F}}
\newcommand{\bP}{\mathbb{P}}
\newcommand{\bQ}{\mathbb{Q}}
\newcommand{\bZ}{\mathbb{Z}}
\newcommand{\scr}{\mathcal}
\newcommand{\sF}{\scr{F}}
\newcommand{\sO}{\scr{O}}
\DeclareMathOperator{\Aut}{Aut}
\DeclareMathOperator{\Bij}{{Bij}}
\DeclareMathOperator{\codim}{codim}
\DeclareMathOperator{\depth}{{depth}} 
\DeclareMathOperator{\Diff}{Diff}
\DeclareMathOperator{\Exc}{Exc}
\DeclareMathOperator{\GL}{{GL}}
\DeclareMathOperator{\Hom}{Hom}
\DeclareMathOperator{\id}{{id}}
\DeclareMathOperator{\im}{{im}}
\DeclareMathOperator{\PGL}{{PGL}}
\DeclareMathOperator{\Spec}{{Spec}}
\DeclareMathOperator{\Supp}{{Supp}}
\DeclareMathAlphabet{\mathchanc}{OT1}{pzc}%
                                 {m}{it}
\newcommand{\mcH}{\mathchanc{H}}
\newcommand{\mco}{\mathchanc{o}}
\newcommand{\mcm}{\mathchanc{m}}
\newcommand{\sHom}[0]{{\mcH\mco\mcm}}
\newcommand{\fm}{\mathfrak{m}}
\newcommand{\fn}{\mathfrak{n}}
\DeclareMathOperator{\Char}{char}
\DeclareMathOperator{\pr}{pr}
\setlist[enumerate,1]{label={(\alph*)}, ref={\alph*}}
\title{Abundance for slc surfaces over arbitrary fields}
\author{Quentin Posva}
\address{\'{E}cole Polytechnique F\'{e}d\'{e}rale de Lausanne, SB MATH CAG, MA C3 595 (B\^atiment MA), Station 8, CH-1015 Lausanne, Switzerland}
\email{quentin.posva@epfl.ch}
\begin{document}


\maketitle

\begin{prelims}

\DisplayAbstractInEnglish

\bigskip

\DisplayKeyWords

\medskip

\DisplayMSCclass

\end{prelims}


\newpage

\setcounter{tocdepth}{1}

\tableofcontents


\section{Introduction}

The minimal model program (MMP) predicts that a variety with mild singularities $X$ admits a birational model $X'$ such that either $K_{X'}$ is nef, or  there is a fibration $X'\to Y$ whose general fiber is a Fano variety. In the first case, the MMP is completed by the abundance conjecture: if $K_{X'}$ is nef, then it should also be semi-ample. 

In the case of surfaces, both the MMP and the abundance conjecture are established in many cases. For smooth surfaces over the complex numbers, this goes back to the work of the Italian school at the beginning of the twentieth century and to the subsequent work of Kodaira, although the results were formulated in different terms; see~\cite[Section~1]{Matsuki_Introduction_to_the_Mori_Program} for an exposition of these results. These classical methods were extended by Mumford~\cite{Mumford_Enriques_classification_in_char_p_I} to surfaces over algebraically closed fields of positive characteristic. Since then, the MMP and the abundance conjecture were proved more generally for log canonical surface pairs over the complex numbers by Fujino~\cite{Fujino_MMP_for_log_surfaces} and over algebraically closed fields of positive characteristic by Tanaka~\cite{Tanaka_MMP_and_abundance_for_positive_characteristic_log_surfaces}.

The work of Koll\'{a}r and Shepherd-Barron on the moduli space of canonically polarized smooth complex surfaces~\cite{Kollar_Shepherd_Barron_3folds_and_deformations_of_surfaces_singularities} has demonstrated that in order to have a good moduli theory of such surfaces, we should consider the larger class of so-called semi-log canonical (slc) surfaces. Hence it is natural to ask whether the MMP and the abundance theorem can be extended to that class of surfaces. As a matter of fact, the usual MMP does not work; see~\cite[Example 5.4]{Fujino_Fundamental_Theorems_for_slc_pairs}. On the other hand, abundance holds for slc surface pairs in characteristic zero by~\cite[Sections~8 and~12]{Flips_and_abundance_for_3folds} and over algebraically closed fields of positive characteristic by~\cite{Tanaka_Abundance_for_slc_surfaces}.

\bigskip
The purpose of this article is to extend the abundance theorem to slc surfaces over any field of positive characteristic. We prove the following. 

\begin{theorem_intro}\label{theorem:abundance_relative}
Let $(S,\Delta)$ be an slc surface pair and $f\colon S\to B$ a projective morphism, where $B$ is quasi-projective over a field of positive characteristic. Assume that $K_{S}+\Delta$ is $f\!$-nef; then it is $f\!$-semi-ample.
\end{theorem_intro}

Let us sketch the proof in the case where $B$ is the spectrum of a field. Abundance holds over arbitrary fields for lc surface pairs by the work of Tanaka~\cite{Tanaka_Abundance_for_lc_surfaces_over_imperfect_fields}. Thus if $(\bar{S},\bar{D}+\bar{\Delta})$ is the normalization of $(S,\Delta)$, since $K_S+\Delta$ pull backs to $K_{\bar{S}}+\bar{D}+\bar{\Delta}$, the latter is semi-ample. We have to find a way to descend semi-ampleness along the normalization.

Our strategy is similar to that of~\cite{Hacon_Xu_Finiteness_of_B_representations_and_slc_abundance}. Let $\tau$ be the involution of $\bar{D}^n$ induced by the normalization, and let $\varphi\colon \bar{S}\to \bar{T}$ be the fibration given by a sufficiently divisible multiple of $K_{\bar{S}}+\bar{D}+\bar{\Delta}$. One shows that the set-theoretic equivalence relation on $\bar{T}$ induced by $(\varphi,\varphi\circ\tau)\colon \bar{D}^n\rightrightarrows \bar{T}$ is finite using finiteness of \textbf{B}-representations. It follows that the quotient $T:=\bar{T}/(\bar{D}^n\rightrightarrows \bar{T})$ exists, and similar arguments show that the hyperplane divisor of $\bar{T}$ descends to $T$. Then it is not difficult to show that the composition $\bar{S}\to\bar{T}\to T$ factors through $S$ and that a multiple of $K_S+\Delta$ is the pullback of the hyperplane divisor of $T$.

In~\cite{Hacon_Xu_Finiteness_of_B_representations_and_slc_abundance} the authors use the theory of sources and springs of a crepant log structure developed by Koll\'{a}r (see~\cite[Section~4.3]{Kollar_Singularities_of_the_minimal_model_program}) to prove the finiteness of the equivalence relation and descent of the hyperplane divisor. We have not placed our proof on such axiomatic ground since the few cases of crepant log structures $\bar{S}\to\bar{T}$ that arise in our situation can be quite explicitly described. However, our proof is an illustration of Koll\'{a}r's theory: the technical details are easier, yet we encounter its main steps and subtleties. We highlight the correspondences in \Cref{remark:Kollars_theory_illustrated}. 

There is another approach to slc abundance, developed in characteristic zero by Fujino~\cite{Fujino_Abundance_for_slc_3folds_announce, Fujino_Abundance_for_slc_3folds} and used by Tanaka to prove slc abundance for surfaces over algebraically closed fields of positive characteristic; see~\cite{Tanaka_Abundance_for_slc_surfaces}. This approach is actually closely related to that of Hacon and Xu, and to Koll\'{a}r's theory of crepant log structures: the finiteness of \textbf{B}-representations plays a crucial role (see~\cite[Conjecture 4.2]{Fujino_Abundance_for_slc_3folds_announce}), and the geometric properties of $\bar{S}\to\bar{T}$ that are relevant in Fujino's approach (see~\cite[Proposition 3.1]{Fujino_Abundance_for_slc_3folds_announce}) can be understood in terms of sources, springs and $\bP^1$-links (see~\cite[Section~4.3]{Kollar_Singularities_of_the_minimal_model_program}).

The setup of~\cite{Hacon_Xu_Finiteness_of_B_representations_and_slc_abundance} may also be applied to the relative setting $f\colon S\to B$. However, instead of adapting all the previous steps to this relative setting, we choose to reduce to the absolute case as in~\cite{Tanaka_Abundance_for_lc_surfaces_over_imperfect_fields}. The main step is to compactify both $S$ and $B$ while preserving the properties of $S$ and $f$. This is achieved by running an MMP and carrying the gluing data along it.

\subsection*{Applications}
We give two applications of \Cref{theorem:abundance_relative}. The first one is about families of slc surfaces in mixed characteristic. In positive characteristic, relative semi-ampleness is a property of fibers by~\cite{Cascini_Tanaka_Relative_semi_ampleness_in_pos_char}. Recent work of Witaszek~\cite{Witaszek_Relative_semiampleness_in_mixed_char} shows that a similar statement holds for relative semi-ampleness in mixed characteristic. Combining these results with our main theorem, we obtain the following. 

\begin{theorem_intro}[\Cref{theorem:abundance_in_mixed_characteristic}]
Let $S$ be an excellent regular one-dimensional scheme that is separated and of finite type over $\Spec(\bZ)$. Assume that no component of $S$ is of equicharacteristic zero. Let $f\colon (X,\Delta)\to S$ be a surjective flat projective morphism of relative dimension two with $S_2$ fibers. Let $\Delta$ be a $\bQ$-divisor on $X$ such that $(X,\Delta+X_s)$ is slc for every closed point $s\in S$.

If $K_X+\Delta$ is $f\!$-nef, then it is $f\!$-semi-ample.
\end{theorem_intro}

The second application is abundance for minimal lc threefold pairs of general type over $F$-finite fields of big enough characteristic. 

\begin{theorem_intro}[\Cref{thm:abundance_in_dim_3}]
Let $(X,\Delta)$ be a three-dimensional lc projective pair over an $F$-finite field $k$ of characteristic $p>5$. Let $f\colon X\to S$ be a projective morphism onto a projective normal $k$-scheme satisfying $f_*\sO_X=\sO_S$. 

If\, $K_X+\Delta$ is $f\!$-nef and $f\!$-big, then it is $f\!$-semi-ample.
\end{theorem_intro}

Over an algebraically closed field of characteristic $p>5$, this result was obtained in \cite[Theorem~1.1]{Waldron_MMP_for_3_folds_in_char_>5}. The proof in \textit{op.\ cit.}\ relies on the MMP for klt pairs and on abundance for slc surfaces. The recent developments of the MMP over imperfect fields (see \cite{Das_Waldron_MMP_for_3folds_over_imperfect_fields} and also \cite{Bhatt&Co_MMP_for_3folds_in_mixed_char}) combined with our \Cref{theorem:abundance_relative} yield the $F$-finite case. The abundance for surfaces comes into play through the following form. 

\begin{theorem_intro}[\Cref{theorem:restriction_to_boundary_is_semi_ample}]
Let $(X,\Delta)$ be a projective $\bQ$-factorial dlt threefold over an arbitrary field $k$ of characteristic $p>5$. Assume that $K_X+\Delta$ is nef. Then $(K_X+\Delta)|_{\Delta^{=1}}$ is semi-ample.
\end{theorem_intro}

The proof we give follows closely that of \cite[Theorem~1.3]{Waldron_MMP_for_3_folds_in_char_>5}.

\subsection*{Acknowledgements}
I am grateful to Jakub Witaszek for several suggestions, and to the anonymous reviewer for their comments. I also thank the anonymous reviewer for some corrections and suggestions. 

\section{Preliminaries}

\subsection{Conventions and notation}
We work over an arbitrary field $k$ of positive characteristic, except in \Cref{section:mixed_characteristic}, where we work over an excellent base scheme $S$. We use the same terminology in the more general case, replacing $k$ with $S$ in the definitions below.

A \emph{variety} is a connected separated reduced equidimensional scheme of finite type over $k$. Note that a variety in our sense might be reducible. A \emph{curve} (resp.\ a \emph{surface}, a \emph{threefold}) is a variety of dimension one (resp.\ two,  three).

Let $X$ be an integral scheme. A coherent $\sO_X$-module $\sF$ is $S_2$ if it satisfies the condition $\depth_{\sO_{X,x}}\sF_x\geq \min\{2,\dim\sF_x\}$ for all $x\in X$.

If $X$ is a reduced Noetherian scheme, its \emph{normalization} is defined to be its relative normalization along the structural morphism $\bigsqcup_\eta\Spec(k(\eta))\to X$, where $\eta$ runs through the generic points of $X$. Recall that $X$ is normal if and only if it is regular in codimension one and $\sO_X$ is $S_2$.

Let $C$ be a regular irreducible proper curve over an arbitrary field $k$. We define the \textit{genus} of $C$ to be 
	$$g(C):=\frac{\dim_k H^1(C,\sO_C)}{\dim_k H^0(C,\sO_C)}=\dim_{H^0(C,\sO_C)}H^1(C,\sO_C).$$ 
Notice that $g(C)=1$ if and only if $\omega_{C/k}\cong \sO_C$; see~\cite[Example~7.3.35]{Liu_Ag_and_arithmetic_curves}.

Given a normal variety $X$, we denote by $K_X$ any Weil divisor on $X$ associated to the invertible sheaf $\omega_{X_\text{reg}}$. A $\bQ$-Weil divisor $D$ on $X$ is \emph{$\bQ$-Cartier} if for some $m>0$, the reflexive sheaf $\sO_X(mD)$ is invertible.

A \emph{pair} $(X,\Delta)$ is the data of a  normal variety $X$, together with a $\bQ$-Weil divisor $\Delta$ whose coefficients belongs to $[0,1]$, such that $K_X+\Delta$ is $\bQ$-Cartier. The divisor $\Delta$ is called the \emph{boundary} of the pair. 

We follow the standard terminology of~\cite[Section~2.1]{Kollar_Singularities_of_the_minimal_model_program} for the birational geometry of pairs. In particular, we refer the reader to \textit{loc.~cit.}\ for the notions of \emph{log canonical} (\emph{lc}\,) and \emph{divisorially log terminal} (\emph{dlt}\,) pairs, and for the definition of \emph{log canonical model}. 

If $(X,D)$ is a dlt pair, the \emph{strata} of $D^{=1}=\sum_{i=1}^nD_i$ are the irreducible components of all possible intersections $\bigcap_{i\in I}D_i$ with $I\subset \{1,\dots,n\}$.

Let $(X,\Delta+D)$ be a pair, where $D$ is a reduced divisor with normalization $D^n$. Then there is a ca\-non\-i\-cal\-ly defined $\bQ$-divisor $\Diff_{D^n}\Delta$ on $D^n$ such that restriction on $D^n$ induces an isomorphism ${\omega_X^{[m]}(m\Delta+mD)|_{D^n}}\cong \omega_{D^n}(m\Diff_{D^n}\Delta)$ for $m$ divisible enough. Singularities of $(X,\Delta+D)$ along $D$ and singularities of $(D^n,\Diff_{D^n}\Delta)$ are related by so-called adjunction theorems. We refer to~\cite[Section~4.1]{Kollar_Singularities_of_the_minimal_model_program} for fundamental theorems of adjunction theory.

Let $X$ be a variety and $C\subset X$ a curve proper over a field $k'$. For a Cartier divisor $L$ on $X$, we define the intersection number $L\cdot_{k'} C=\deg_{k'}\sO_C(L|_C)$. By linearity, this definition extends to $\bQ$-Cartier divisors.

Let $X$ be a variety and $D$ be a $\bQ$-divisor. We denote by $\Aut_k(X,D)$ the group of $k$-automorphisms $\sigma$ of $X$ with the property that $\sigma(D)=D$ (we allow $\sigma$ to permute the components of $D$ that have the same coefficients). Similarly, if $L$ is a line bundle, then $\Aut_k(X,L)$ is the group of $k$-automorphisms $\sigma$ of $X$ such that $\sigma^*L\cong L$.

\bigskip
The two main notions that appear in the abundance conjecture are those of relative nefness and semi-ampleness, which we recall now. Let $\pi\colon X\to S$ be a proper morphism of schemes and $L$ a line bundle on~$X$. 

We say that $L$ is \emph{relatively nef} over $S$ if, whenever $C\subset X$ is a proper curve contracted by $\pi$, it holds that $\deg L|_C\geq 0$. 

We say that $L$ is \emph{relatively semi-ample} over $S$ if, for $m\gg 1$, the natural map $\pi^*\pi_*L^{\otimes m}\to L^{\otimes m}$ is surjective. This notion is clearly local on the base $S$. If $S$ is the spectrum of a field $k$, then $L$ is semi-ample (over $k$) if and only if the linear system $H^0(X,L^{\otimes m})$ defines a morphism $X\to \bP_k(H^0(X,L^{\otimes m})^\vee)$ for $m\gg 1$.

Both notions holds for $L$ if and only if they hold for some power $L^{\otimes n}$ ($n\geq 1$). Thus they extend to the case where $L$ is a $\bQ$-Cartier divisor.

\subsection{Quotients by finite equivalence relations}\label{section:Quotients_by_equivalence_relation}
The theory of quotients by finite equivalence relations is developed in~\cite{Kollar_Quotients_by_finite_equivalence_relations} and~\cite[Section~9]{Kollar_Singularities_of_the_minimal_model_program}. For convenience, we recall the basic definitions and constructions that we will need.

Let $S$ be a base scheme and $X$ and $R$ two reduced $S$-schemes. An $S$-morphism $\sigma=(\sigma_1,\sigma_2)\colon R\to X\times_S X$ is a \emph{set-theoretic equivalence relation} if, for every geometric point $\Spec(K)\to S$, the induced map
		$$\sigma\left((\Spec(K)\right)\colon \Hom_S(\Spec(K),R)\to \Hom_S(\Spec(K),X)\times \Hom_S(\Spec(K),X)$$
	is injective and an equivalence relation of the set $\Hom_S(\Spec(K),X)$. We say in addition that $\sigma\colon R\to X\times_S X$ is \emph{finite} if both $\sigma_i\colon R\to X$ are finite morphisms.
	
	Suppose that $\sigma^\text{pre}\colon R\hookrightarrow X\times_S X$ is a reduced closed subscheme. Then there is a minimal set-theoretic equivalence relation generated by $\sigma^\text{pre}$; see~\cite[Section~9.3]{Kollar_Singularities_of_the_minimal_model_program}. Even if both $\sigma_i^\text{pre}\colon R^\text{pre}\to X$ are finite morphisms, the resulting pro-finite relation may not be finite: achieving transitivity can create infinite equivalence classes. 
	
	In most cases, we will be in the following situation. Let $Z\to X$ be a finite morphism and $\tau\colon Z\cong Z$ be an $S$-involution. The graph $\Gamma_\tau\subset Z\times_S Z$ is endowed with a finite morphism to $X\times_SX$; let $\sigma\colon \Gamma\hookrightarrow X\times_S X$ be its reduced image. Then we will be interested in the equivalence relation generated by $\sigma$, which we will denote by $R(\tau)\rightrightarrows X$.

	Let $(\sigma_1,\sigma_2)\colon R\to X\times_S X$ be a finite set-theoretic equivalence relation. A \emph{geometric quotient} of this relation is an $S$-morphism $q\colon X\to Y$ such that
		\begin{enumerate}
			\item $q\circ \sigma_1=q\circ \sigma_2$,
			\item $(Y,q\colon X\to Y)$ is initial in the category of algebraic spaces for the property above, and
			\item $q$ is finite.
		\end{enumerate}
The most important result for us is that quotients by finite equivalence relations usually exist in positive characteristic: if $X$ is essentially of finite type over a field $k$ of positive characteristic and $R\rightrightarrows X$ is a finite set-theoretic equivalence relation, then the geometric quotient $X/R$ exists and is a $k$-scheme; see~\cite[Theorem 6 and~Corollary 48]{Kollar_Quotients_by_finite_equivalence_relations}.

\begin{remark}
It is standard to add an extra condition in the definition of a geometric quotient $q\colon X\to Y$ that the geometric fibers are the $R$-equivalence classes: more precisely that, for every geometric point $\Spec(K)\to S$, the fibers of $q_K\colon X_K(K)\to Y_K(K)$ are the $\sigma(R_K(K))$-equivalence classes of $X_K(K)$. But this condition turns out to be a consequence of the three other ones; see the proof of~\cite[Lemma 17]{Kollar_Quotients_by_finite_equivalence_relations}.
\end{remark}

We also record the following result.

\begin{lemma}\label{lemma:descent_of_line_bdl_along_quotient}
Let $X$ be a reduced Noetherian pure-dimensional scheme and $R\rightrightarrows X$ a finite equivalence relation for which there exists a finite quotient $q\colon X\to Y:=X/R$. Let $L$ be a line bundle on $Y$, with pullback $L_X=q^*L$. Then $L$ is equal to the subsheaf of $q_*L_X$ formed by those sections which are $R$-invariant.
\end{lemma}

{\samepage\begin{proof}
Denote by $\sigma_1,\sigma_2\colon R\rightrightarrows X$ the two projection morphisms. By \cite[Lemma~9.10]{Kollar_Singularities_of_the_minimal_model_program}, we have
		$$\sO_Y=\ker\left[ q_*\sO_X \overset{\sigma_1^*-\sigma_2^*}{\xrightarrow{\hspace{1.2cm}}} (q\circ\sigma_i)_*\sO_{R}\right].$$
Tensor this expression by $L$ and use the projection formula to obtain the result.
\end{proof}
}
  
\subsection{Demi-normal varieties and slc pairs}
We recall the definitions of demi-normal varieties and of slc pairs. We refer the reader to~\cite[Section~3]{Posva_Gluing_for_surfaces_and_threefolds} for basic properties worked out in the generality we need.

\begin{definition}
A one-dimensional reduced Noetherian local ring $(R,\fm)$ is called a \emph{node} if there exists a ring isomorphism $R\cong S/(f)$, where $(S,\fn)$ is a regular two-dimensional local ring and $f\in \fn^2$ is an element that is not a square in $\fn^2/\fn^3$.

A locally Noetherian reduced scheme (or ring) is called \emph{nodal} if its codimension one local rings are regular or nodal. It is called \emph{demi-normal} if it is $S_2$ and nodal.
\end{definition}

Let $X$ be a reduced scheme with normalization $\pi\colon \bar{X}\to X$. The conductor ideal of the normalization is defined as
		$$\fI:=\sHom_X(\pi_*\sO_{\bar{X}},\sO_X).$$
It is an ideal in both $\sO_X$ and $\sO_{\bar{X}}$. We let
		$$D:=\Spec_X(\sO_X/\fI)\quad\text{and}\quad \bar{D}:=\Spec_{\bar{X}}(\sO_{\bar{X}}/\fI)$$
and call them the \emph{conductor subschemes}.

\begin{lemma}\label{lemma: basic properties of normalization of demi-normal scheme}
  Keep the notation above. Assume that $X$ is a demi-normal germ of variety.  Then: 
	\begin{enumerate}
		\item $D$ and $\bar{D}$ are reduced of pure codimension one.
		\item If $\eta\in D$ is a generic point such that $\Char k(\eta)\neq 2$, then the morphism $\bar{D}\to D$ is \'{e}tale of degree two in a neighborhood of $\eta$.
		\item $X$ has a dualizing sheaf which is invertible in codimension one. In particular, it has a well-defined canonical divisor $K_X$.
		\item Let $\Delta$ be a $\bQ$-divisor on $X$ with no component supported on $D$ and such that $K_X+\Delta$ is $\bQ$-Cartier. If $\bar{\Delta}$ denotes the divisorial part of $\pi^{-1}(\Delta)$, then there is a canonical isomorphism
				$$\pi^*\omega_X^{[m]}(m\Delta)\cong \omega_{\bar{X}}^{[m]}\left(m\bar{D}+m\bar{\Delta}\right)$$
		for $m$ divisible enough.
	\end{enumerate}
\end{lemma}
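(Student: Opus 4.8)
The statement is local on $X$, so the plan is to set $X=\Spec R$ with $(R,\fm)$ a demi-normal local ring, $\pi\colon\bar X\to X$ its normalization, and $Q:=\pi_*\sO_{\bar X}/\sO_X$, a coherent sheaf supported on the non-normal locus, of codimension $\geq 1$ since $X$ is reduced and generically normal. Every assertion will be reduced to codimension one, where $X$ is either regular (and $\pi$ an isomorphism) or nodal (where one computes directly on the node $S/(f)$).

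For the purity in (1), I would run a depth estimate on the sequence $0\to\sO_X\to\pi_*\sO_{\bar X}\to Q\to 0$. Since $\sO_X$ is $S_2$ by demi-normality and $\pi_*\sO_{\bar X}$ is $S_2$ (as $\bar X$ is normal and $\pi$ finite), the depth lemma gives $\depth_x Q\geq\min(\depth_x\sO_X-1,\depth_x\pi_*\sO_{\bar X})\geq 1$ at every point $x$ of codimension $\geq 2$. Thus $Q$, and hence $\sO_X/\fI$ and $\sO_{\bar X}/\fI$, has no associated prime of codimension $\geq 2$, so $D$ and $\bar D$ are pure of codimension one without embedded points. For reducedness I would verify $R_0$ at a generic point of $D$: there $R$ is nodal and an explicit computation on $S/(f)$ shows the conductor is the maximal ideal, so $D$ and $\bar D$ are generically reduced; with no embedded points this yields reducedness.

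Part (2) is again local at a codimension-one node: I would compute the normalization of $R=S/(f)$ by hand. When $\Char k(\eta)\neq 2$, the leading quadratic form of $f$ is not a square and so splits into two distinct linear forms over a separable quadratic extension; then $\bar R$ has two branches through the node, and restricting $\pi$ to the conductor presents $\bar D\to D$ as the two branch points lying over the node, with separable residue extension—hence finite étale of degree two near $\eta$. The characteristic-two case is excluded precisely because the splitting field of the leading form may then be inseparable. For (3), since $X$ is $S_2$ of finite type over $k$ it carries a dualizing complex whose bottom cohomology $\omega_X$ is $S_2$; as $X$ is regular or nodal (hence Gorenstein) in codimension one, $\omega_X$ is invertible there, and an $S_2$ sheaf invertible in codimension one is a divisorial sheaf determining a well-defined $K_X$.

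The heart of the lemma, and the step I expect to be the main obstacle, is the adjunction isomorphism (4). Both sides are reflexive on the normal variety $\bar X$, so it suffices to construct a canonical isomorphism at codimension-one points and extend by $S_2$. Away from $\bar D$ the map $\pi$ is an isomorphism and $\bar\Delta=\Delta$, making the formula tautological. Along $\bar D$ I would invoke Grothendieck duality for the finite morphism $\pi$: combining $\pi^!\omega_X\cong\omega_{\bar X}$ with $\pi^!\omega_X=\sHom_{\sO_X}(\pi_*\sO_{\bar X},\omega_X)$ shows that the defect between $\pi^*\omega_X$ and $\omega_{\bar X}$ is exactly the conductor, giving the Poincaré-residue isomorphism $\pi^*\omega_X\cong\omega_{\bar X}(\bar D)$ (locally, the residue of $\tfrac{dx\wedge dy}{xy}$ along a branch is $\tfrac{dx}{x}$). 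Because $\Delta$ has no component along $D$, its strict transform $\bar\Delta$ avoids $\bar D$ in codimension one, so twisting by $m\bar\Delta$ and passing to reflexive $m$-th powers for $m$ divisible enough (so that $\omega_X^{[m]}(m\Delta)$ is invertible) yields the stated isomorphism. The delicate parts are making the residue map \emph{canonical} and handling Grothendieck duality for a possibly inseparable $\pi$ over an arbitrary field, while correctly bookkeeping $\bar\Delta$ against $\bar D$.
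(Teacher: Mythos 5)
The paper does not actually prove this lemma: it is quoted wholesale from \cite[2.2.3]{Posva_Gluing_for_surfaces_and_threefolds}, which in turn follows the standard treatment of conductors and of the different in \cite[\S 5.1]{Kollar_Singularities_of_the_minimal_model_program}. Your sketch reproduces exactly that route --- a depth chase for purity in (1), an explicit computation at a nodal codimension-one local ring for (2), duality theory for (3), and duality for the finite morphism $\pi$ together with reflexive extension from codimension one for (4) --- and parts (2), (3) and (4) are correct as outlined (Grothendieck duality for a finite morphism is characteristic-free, so inseparability of $\pi$ causes no trouble, and the $\bQ$-Cartier hypothesis makes both sides of (4) invertible, hence reflexive, for $m$ divisible enough).

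The one step that does not follow as written is in (1): knowing that $Q=\pi_*\sO_{\bar X}/\sO_X$ has no associated primes of codimension $\geq 2$ does not by itself give the same for $\sO_X/\fI$ and $\sO_{\bar X}/\fI$. The conductor is the annihilator of $Q$, and in general the quotient of a ring by the annihilator of a module can acquire embedded primes that the module itself does not have; the three sheaves share a support but not, a priori, their associated primes. The standard repair is to note that $\fI=\sHom_X(\pi_*\sO_{\bar X},\sO_X)$ is itself $S_2$, being a $\sHom$ into the $S_2$ sheaf $\sO_X$, and then to run your depth lemma directly on the two sequences $0\to\fI\to\sO_X\to\sO_D\to 0$ and $0\to\fI\to\pi_*\sO_{\bar X}\to\sO_{\bar D}\to 0$; this gives $\depth\geq 1$ for $\sO_D$ and $\sO_{\bar D}$ at every point of codimension $\geq 2$, i.e.\ $S_1$, which combined with your generic (nodal) computation yields $R_0+S_1$ and hence reducedness and purity. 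With that repair your proposal matches the cited proof in substance.
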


\begin{proof}
  See~\cite[Lemma 2.3.3]{Posva_Gluing_for_surfaces_and_threefolds}.
\end{proof}

\begin{definition}
We say that $(X,\Delta)$ is a \emph{semi-log canonical $($slc\,$)$ pair} if $X$ is demi-normal, $\Delta$ is a $\bQ$-divisor with no components along $D$, $K_X+\Delta$ is $\bQ$-Cartier, and the normalization $(\bar{X}, \bar{D}+\bar{\Delta})$ is an lc pair.
\end{definition}

\begin{definition}
We keep the notation of \Cref{lemma: basic properties of normalization of demi-normal scheme}. Let $\eta\in D$ be a generic point. Then $\bar{D}\to D$ is either purely inseparable (that can only happen if $\Char k(\eta)=2$) or generically \'{e}tale. In the first case, we call $(\eta\in X)$ an \emph{inseparable node}; in the second case, we call it a \emph{separable node}. 
\end{definition}

\begin{lemma}\label{lemma:normalization_gives_involution}
Keep the notation above. Assume that $X$ has only separable nodes.  Then: 
	\begin{enumerate}
		\item The induced morphism of normalizations $\bar{D}^n\to D^n$ is the geometric quotient by a Galois involution $\tau$.
		\item If $K_X+\Delta$ is $\bQ$-Cartier, then $\tau$ is a log involution of $(\bar{D}^n,\Diff_{\bar{D}^n}\bar{\Delta})$.
	\end{enumerate}
\end{lemma}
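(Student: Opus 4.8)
The plan is to prove the two assertions separately, the first being essentially formal and the second reducing to a computation at codimension one. For part (1), the starting point is that the finite morphism $\bar D\to D$ of conductor subschemes has generic degree $2$, since separating the two branches of a node is exactly what normalization does, and by \autoref{lemma: basic properties of normalization of demi-normal scheme}(1) both $\bar D$ and $D$ are reduced of pure codimension one. Passing to normalizations, the induced finite surjective morphism $\bar D^n\to D^n$ therefore has degree $2$ over each generic point $\eta$ of $D$. The separable-node hypothesis means that $\bar D\to D$ is generically \'etale over every such $\eta$, so the degree-two extension $k(\bar D^n)/k(D^n)$ is separable at each generic point: it is either $k(\eta)\times k(\eta)$ or a separable quadratic field extension. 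A separable degree-two extension is Galois, so in both cases we obtain a nontrivial involution of the fibre, namely the swap of the two rational branches or the Galois automorphism of the quadratic residue field. This involution of $\Frac(\bar D^n)$ fixes $\Frac(D^n)$, hence preserves the integral closure $\sO_{\bar D^n}$ and extends to an involution $\tau$ of $\bar D^n$ over $D^n$ by normality. As $\bar D^n\to D^n$ is then a finite surjective morphism of normal schemes that is generically Galois with group $\langle\tau\rangle$, its invariant subsheaf is $\sO_{D^n}$ and $D^n=\bar D^n/\tau$ is the geometric quotient.

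For part (2), being a log involution reduces to the single equality of divisors $\tau^*\Diff_{\bar D^n}\bar\Delta=\Diff_{\bar D^n}\bar\Delta$, so since $\bar D^n$ is normal it suffices to compare coefficients at each codimension-one point $\nu$ of $\bar D^n$. If $\tau$ fixes $\nu$ (the inert case) there is nothing to prove; otherwise $\nu$ and $\tau(\nu)$ are the two distinct preimages in $\bar D^n$ of a codimension-one point of $D^n$, i.e.\ the two branches of a separable node of $X$ in codimension one, and I must show that the different has the same coefficient at $\nu$ and $\tau(\nu)$. The idea is to use the $\bQ$-Cartier hypothesis to carry one section through adjunction: fixing $m>0$ with $\omega_X^{[m]}(m\Delta)$ invertible and a local generator $\omega$, \autoref{lemma: basic properties of normalization of demi-normal scheme}(4) shows that $\pi^*\omega$ generates $\omega_{\bar X}^{[m]}(m\bar D+m\bar\Delta)$, and adjunction along $\bar D$ sends it to a rational section $s$ of $\omega_{\bar D^n}^{[m]}$ with $\Diff_{\bar D^n}\bar\Delta=-\tfrac1m\opdiv(s)$. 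Because $\tau$ acts on the sheaf $\omega_{\bar D^n}^{[m]}$, one has $\tau^*\Diff_{\bar D^n}\bar\Delta=-\tfrac1m\opdiv(\tau^*s)$, so the whole statement reduces to showing that $\tau^*s$ and $s$ have the same divisor, equivalently that $\tau^*s=u\cdot s$ for a nowhere-vanishing $u$.

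The crux, and what I expect to be the main obstacle, is precisely to show that $u$ is a unit. Note that the line-bundle version of the argument is free: since the map $\bar D^n\to X$ factors through $D^n$ and hence is $\tau$-invariant, the generator $\omega$ is pulled back from the single scheme $X$, which already yields $\tau^*\Diff_{\bar D^n}\bar\Delta\sim\Diff_{\bar D^n}\bar\Delta$ up to $\bQ$-linear equivalence; the content is to upgrade this to equality of divisors. I would do so using the explicit \'etale-local model of a separable node furnished by \autoref{lemma: basic properties of normalization of demi-normal scheme}(2): near the generic point of $\bar D$, the scheme $\bar X$ is the disjoint union of two branches meeting transversally along $\bar D$ and interchanged by $\tau$, and the Poincar\'e residues of $\pi^*\omega$ along the two branches differ only by the derivative of the transverse gluing, which is a unit at a node. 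The role of the $\bQ$-Cartier hypothesis is exactly to ensure that the residue on each branch is computed from the one generator $\omega$ coming from $X$, rather than from unrelated data on the two branches; transversality of the node then forces the two residues to agree up to a nowhere-vanishing factor, which is all that is required.
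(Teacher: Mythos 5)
The paper gives no internal proof of this lemma: it simply cites \cite[2.2.5]{Posva_Gluing_for_surfaces_and_threefolds}, so your argument has to stand on its own. Part (1) is fine. The generic degree of $\bar D^n\to D^n$ is $2$, the separable-node hypothesis makes the extension of total quotient rings generically \'etale of degree $2$, hence Galois (split or a separable quadratic field extension), and the Galois involution preserves integral closures and so extends to $\bar D^n$; the invariant ring is normal and finite over $\sO_D$ with the right fraction field, so it is $\sO_{D^n}$. This is a complete and standard argument.

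Part (2) has the right skeleton --- reduce to comparing coefficients at a codimension-one point $\nu$ of $\bar D^n$ and at $\tau(\nu)$, write the different via the residue of a local generator $\omega$ of $\omega_X^{[m]}(m\Delta)$ (this is where $\bQ$-Cartier enters), and show $\tau^*s=u\cdot s$ with $u$ a unit --- but the step you correctly identify as the crux is not actually closed, for two reasons. First, there is a codimension mismatch: the coefficients of $\Diff_{\bar D^n}\bar\Delta$ live at codimension-one points of $\bar D^n$, i.e.\ codimension-\emph{two} points of $X$, where $X$ is merely demi-normal and no nodal local model is available; the \'etale-local picture of two transverse branches only exists at the \emph{generic} points of $\bar D$. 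The bridge you need, and do not state, is that $u=\tau^*s/s$ is a single rational function on each component of $\bar D^n$, so it suffices to compute it at the generic point --- but then ``$u$ is a unit at the generic point'' is vacuous (every nonzero rational function is), and you must instead pin $u$ down exactly there. Second, the mechanism that pins it down is not ``the derivative of the transverse gluing'' but the residue theorem at a node: for a local generator of $\omega_X$ at a separable node the two Poincar\'e residues along the branches sum to zero, so for $\omega_X^{[m]}(m\Delta)$ one gets $u=(-1)^m$ on the nose (this is exactly why the main proof of the paper later takes $m$ even), and a nonzero constant is a unit at every codimension-one point, which is what the equality of divisors requires. One also needs to treat the inert case, where $\tau$ acts as the Galois involution of a quadratic extension on a single component; your implicit passage to an \'etale cover splitting the node handles this, but it should be said that the different is computed \'etale-locally. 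With these two points made explicit the proof is correct; as written, the decisive identity is asserted rather than proved.
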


\begin{proof}
  See~\cite[Lemma 2.3.5]{Posva_Gluing_for_surfaces_and_threefolds}.
\end{proof}

\begin{examples}
We give two examples of demi-normal surfaces to illustrate the previous definitions.
	\begin{enumerate}
		\item Let $k$ be any field and $S=V(x_0x_1x_2)=P_0\cup P_1\cup P_2\subset\bA^3$ be the union of the three coordinate planes in the affine three-dimensional $k$-space. It is a demi-normal surface with only separable nodes. Its conductor subscheme $D\subset S$ is the union of the three coordinate lines. Its normalization $\bar{S}=P_0\sqcup P_1\sqcup P_2$ is the disjoint union of these coordinate planes. Let $y_i$ and $z_i$ be the coordinates on~$P_i$. Then the conductor subscheme $\bar{D}\subset \bar{S}$ is the union of the coordinate axes:
		$$\bar{D}=\bigsqcup_{i=0}^2 L(y_i)\cup L(z_i),$$ 
where $L(y_i)$ is the axis corresponding to $y_i$. Notice that $\bar{D}$ is not normal at the origins: its normalization $\bar{D}^n$ is the disjoint union of the axes. The involution $\tau$ on $\bar{D}^n$ is given by the identifications $L(y_i)\cong L(z_{i+1})$, $y_i\mapsto z_{i+1}$, where the index is taken modulo $3$.
		\item Let $k$ be a field of characteristic two, and consider the surface $S=V(wv^2-u^2)$. It is a demi-normal surface with a single node, along the conductor subscheme $D$ defined by $u=v=0$. Its normalization is $\bA^2$ with coordinates $t={u}/{v}$ and $v$. The conductor $\bar{D}\subset \bA^2_{t,v}$ is given by $v=0$. Thus the morphism $\bar{D}\to D$ is the morphism
				$$\bA^1_{t}\to \bA^1_w,\quad t\mapsto w=t^2,$$
		which is the Frobenius morphism. In particular, $S$ has an inseparable node.
	\end{enumerate}
\end{examples}

\begin{proposition}\label{prop:structure_of_normalization}
Let $X$ be a demi-normal variety over a field $k$ of positive characteristic and $\pi\colon \bar{X}\to X$ its normalization. Then we have a factorization
		$$\pi=\left( \bar{X}\overset{\nu}{\longrightarrow}\tilde{X}\overset{F}{\longrightarrow} X\right),$$
where
	\begin{enumerate}
		\item $\nu$ is the geometric quotient of\, $\bar{X}$ by the finite set-theoretic equivalence relation induced by the separable nodes of\, $X$;
		\item $F$ is a purely inseparable morphism induced by the inseparable nodes of\, $X$ $($see~\cite[Section~3.5]{Posva_Gluing_for_surfaces_and_threefolds}$)$, and $F=\id$ if $\Char k\neq 2$.
	\end{enumerate}
\end{proposition}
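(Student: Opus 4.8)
\emph{Proof plan.} The strategy is to build $\tilde{X}$ as the quotient of $\bar{X}$ that glues together the two branches of the conductor lying over the separable nodes, and then to identify the residual map to $X$ with the inseparable normalization along the inseparable nodes. First I would record the geometry of the conductor. By \autoref{lemma: basic properties of normalization of demi-normal scheme} the conductor $D\subset X$ is reduced of pure codimension one, and each of its generic points is either a separable or an inseparable node, the latter occurring only when $\Char k=2$. Writing $D=D_s\cup D_i$ for the separable and inseparable parts and $\bar{D}=\bar{D}_s\cup\bar{D}_i$ for their preimages in $\bar{X}$, I would apply \autoref{lemma:normalization_gives_involution} over the open locus of $X$ carrying only separable nodes to obtain a Galois involution $\tau$ of $\bar{D}_s^n$ with quotient $D_s^n$, realizing $\bar{D}_s\to D_s$ as the étale double cover at the generic points of $D_s$.

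Next I would set up the equivalence relation. Let $\iota\colon\bar{D}_s^n\to\bar{X}$ be the normalization of $\bar{D}_s$ followed by the inclusion, and consider the reduced image $R$ of
$$\sigma=(\iota,\ \iota\circ\tau)\colon \bar{D}_s^n\longrightarrow \bar{X}\times_k\bar{X}$$
together with the diagonal. Since $\tau$ is an involution, $R$ is symmetric and, after adjoining the diagonal, reflexive; its classes have at most two points over the codimension-one points of $\bar{D}_s$, so the transitive closure of \cite[9.3]{Kollar_Singularities_of_the_minimal_model_program} does not produce infinite classes, and $R\rightrightarrows\bar{X}$ is a finite set-theoretic equivalence relation (both projections are finite because $\bar{D}_s^n\to\bar{X}$ is). I would then invoke \cite[Theorem 6, Corollary 48]{Kollar_Quotients_by_finite_equivalence_relations}: as $\bar{X}$ is of finite type over a field of positive characteristic, the geometric quotient $\nu\colon\bar{X}\to\tilde{X}:=\bar{X}/R$ exists, is a $k$-scheme, and is finite.

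To produce $F$, note that over a separable generic point of $D$ the normalization $\pi$ is exactly the étale degree-two quotient with Galois involution $\tau$, so $\pi\circ\iota=\pi\circ\iota\circ\tau$; hence $\pi$ is constant on $R$-classes, and the universal property of the geometric quotient gives a unique $F\colon\tilde{X}\to X$ with $\pi=F\circ\nu$, finite because $\pi$ is finite and $\nu$ is finite surjective. Finally I would analyze $F$ locally. Away from $D$ both $\pi$ and $\nu$ are isomorphisms, so $F$ is one as well. Over $D_s$ the map $\nu$ performs precisely the étale gluing creating the node of $X$, so $F$ is an isomorphism there; over $D_i$ the relation $R$ is trivial, $\nu$ is a local isomorphism, and $F$ agrees with $\bar{X}\to X$, which over an inseparable node is the purely inseparable normalization of \cite[\S 3.5]{Posva_Gluing_for_surfaces_and_threefolds}. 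Thus $F$ is a universal homeomorphism with purely inseparable residue extensions, i.e. purely inseparable; and when $\Char k\neq 2$ there are no inseparable nodes by \autoref{lemma: basic properties of normalization of demi-normal scheme}, so $F=\id$.

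The main obstacle is the local matching in the last step: one must verify that the abstract quotient $\tilde{X}$ genuinely reproduces the nodal structure of $X$ along $D_s$ (so that $F$ is an isomorphism there) and the inseparable normalization along $D_i$. This amounts to comparing the gluing produced by Kollár's quotient with the explicit local models of separable and inseparable nodes, and is where the results of \cite[\S 2.2, \S 3.5]{Posva_Gluing_for_surfaces_and_threefolds} carry the weight. A secondary point requiring care is the finiteness of $R$ at the codimension-two points where distinct branches of $\bar{D}_s$ meet, where one must check that the transitive closure keeps the equivalence classes finite.
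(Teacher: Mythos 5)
The paper offers no argument here beyond the citation to \cite[2.3.6]{Posva_Gluing_for_surfaces_and_threefolds}, and your reconstruction follows exactly the construction carried out there: quotient $\bar{X}$ by the relation generated by the separable involution $\tau$, obtain $F$ from the universal property of the geometric quotient, and identify $F$ locally as an isomorphism over the separable locus (via $S_2$-ness of $X$ and of the sheaf of $\tau$-invariant sections) and as the purely inseparable normalization along $D_i$. The one point you flag as delicate --- finiteness of the transitive closure at the codimension-two points where branches of $\bar{D}_s$ meet --- is in fact settled by your own later observation that $\pi\circ\iota=\pi\circ\iota\circ\tau$: the generated equivalence relation is contained in the reduction of $\bar{X}\times_X\bar{X}$, which is finite over $\bar{X}$ since $\pi$ is finite, so every equivalence class sits inside a fiber of $\pi$ and no infinite classes can arise.
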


\begin{proof}
  \cite[Proposition 3.1.12]{Posva_Gluing_for_surfaces_and_threefolds}.
\end{proof}

If a scheme fails to satisfy the property $S_2$, in many cases it has a finite alteration that is $S_2$. 

\begin{proposition}\label{proposition:S_2_fication}
Let $X$ be a reduced equidimensional excellent scheme. Then the locus $U$ where $X$ is $S_2$ is an open subset with $\codim_X(X\setminus U)\geq 2$, and there exists a morphism $g\colon X'\to X$ such that
	\begin{enumerate}
		\item $X'$ is $S_2$ and reduced,
		\item $g$ is finite and an isomorphism precisely above $U$, and
		\item the normalization $X^n\to X$ factorizes through $g$.
	\end{enumerate}
We call $g\colon X'\to X$ the $\mathbf{S_2}$\emph{-fication} of\, $X$.
\end{proposition}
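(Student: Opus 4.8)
The plan is to realize $X'$ as the relative spectrum of the pushforward of $\sO_X$ from the $S_2$-locus, and to verify the three properties by a codimension-two extension argument. Throughout I write $\sK_X$ for the sheaf of total quotient rings of $X$; since $X$ is reduced, $\sK_X$ is a finite product of fields over the generic points, and both $\sO_X$ and $\pi_*\sO_{X^n}$ sit inside $\sK_X$ (normalization does not change the generic points).

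First I would settle the statement about $U$. Since $X$ is excellent, the locus $U$ where $\sO_X$ is $S_2$ is open. To obtain $\codim_X(X\setminus U)\geq 2$ it suffices to check that $X$ is $S_2$ at every point $x$ with $\dim\sO_{X,x}\leq 1$: if $\dim\sO_{X,x}=0$ then $\sO_{X,x}$ is a field, while if $\dim\sO_{X,x}=1$ then reducedness forces $\fm_x$ not to be an associated prime of $\sO_{X,x}$, so $\depth\sO_{X,x}\geq 1=\min\{2,1\}$. Hence $U$ contains every point of codimension $\leq 1$, as required.

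Next I would construct $g$. Let $j\colon U\hookrightarrow X$ and set $\sA:=j_*(\sO_U)\subseteq\sK_X$; plainly $\sO_X\subseteq\sA$. The key inclusion is $\sA\subseteq\pi_*\sO_{X^n}$: a local section $s$ of $\sA$ over $V$ is regular on $V\cap U$, so $\pi^*s$ is a rational function on $\pi^{-1}(V)$ that is regular away from $\pi^{-1}(X\setminus U)$. As $\pi$ is finite and $X$ is equidimensional, this locus has codimension $\geq 2$ in the normal scheme $X^n$, so $\pi^*s$ extends to a regular function by $S_2$-ness (algebraic Hartogs), giving $s\in(\pi_*\sO_{X^n})(V)$. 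Since $X$ is excellent and reduced, the normalization $\pi$ is finite and $\pi_*\sO_{X^n}$ is coherent; being sandwiched as $\sO_X\subseteq\sA\subseteq\pi_*\sO_{X^n}$ over the Noetherian scheme $X$, the sheaf $\sA$ is then a coherent sheaf of $\sO_X$-algebras, and it is reduced because $\sA\subseteq\sK_X$. Setting $X':=\Spec_X\sA$, the morphism $g$ is finite, $X'$ is reduced, the algebra inclusions yield the factorization $X^n\to X'\to X$ of the normalization, and $\sA|_U=\sO_U$ shows $g$ is an isomorphism over $U$.

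The hard part is to verify that $X'$ is $S_2$. Since $g$ is finite, this is equivalent to $\sA=g_*\sO_{X'}$ being $S_2$ as an $\sO_X$-module, and I would check this through the standard characterization: a coherent sheaf is $S_2$ if and only if it is $S_1$ and, for every open $V\subseteq X$ and every closed $Z\subseteq V$ with $\codim_V Z\geq 2$, the restriction $\sA(V)\to\sA(V\setminus Z)$ is bijective. The $S_1$ property is immediate from $\sA\subseteq\sK_X$. For the extension property, note $\sA(V)=\sO_U(V\cap U)$ and $\sA(V\setminus Z)=\sO_U\big((V\cap U)\setminus Z\big)$; since $Z\cap U$ has codimension $\geq 2$ in $V\cap U$ and $\sO_U$ is $S_2$ on $U$, the corresponding extension property on $U$ gives the desired bijection. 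Thus $\sA$ is $S_2$, and hence so is $X'$. Finally, $g$ is an isomorphism \emph{precisely} over $U$: it is one over $U$ by construction, and at a point $x\notin U$ an isomorphism would identify $\sO_{X,x}$ with the $S_2$ ring $\sO_{X',x}$, contradicting $x\notin U$. The main obstacle is exactly this $S_2$-verification — that is, the codimension-two Hartogs extension for $\sA$ — together with the finiteness input that allows one to pass between $S_2$-ness of the scheme $X'$ and of the module $\sA$.
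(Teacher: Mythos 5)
Your proof is correct, but it takes a different route from the paper: the paper's proof of this proposition is essentially a pointer to EGA IV$_2$ (5.10.16 and 5.10.13 for the construction and the first two properties, 5.11.1 for finiteness, and a Stacks Project tag for the factorization through the normalization), whereas you reconstruct the argument from scratch by exhibiting the explicit model $X'=\Spec_X j_*\sO_U$. The two constructions agree -- EGA's $S_2$-ization is the same subsheaf of $\sK_X$ -- but your version is self-contained and has the pleasant feature that the factorization $X^n\to X'\to X$ falls out immediately from the sandwich $\sO_X\subseteq j_*\sO_U\subseteq\pi_*\sO_{X^n}$, which in the paper requires a separate citation; the same sandwich also gives coherence and finiteness in one stroke, where the paper invokes EGA 5.11.1 (which in turn rests on excellence via finiteness of the normalization, exactly as in your argument). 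One step deserves a slightly fuller justification: the claim that $\pi^{-1}(X\setminus U)$ has codimension $\geq 2$ in $X^n$ uses more than finiteness of $\pi$ and equidimensionality of $X$; you should invoke that $X$ is universally catenary (guaranteed by excellence) so that the dimension formula gives $\dim\sO_{X^n,z'}=\dim\sO_{X,z}$ for $z'$ over $z$, since for a finite morphism the two relevant transcendence degrees vanish. With that remark added, all the remaining steps (openness of $U$ from excellence, $U$ containing all points of codimension $\leq 1$ by reducedness, the depth-theoretic characterization of $S_2$ via bijectivity of restriction across closed subsets of codimension $\geq 2$, and the ``precisely above $U$'' claim) are sound.
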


\begin{proof}
The morphism $g\colon X'\to X$ is the one given by~\cite[Proposition~5.10.16]{EGA_IV.2} (see~\cite[Section~5.10.13]{EGA_IV.2} for the definition of the $Z^{(2)}$ appearing there). The first two items also follow from~\cite[Proposition~5.10.16]{EGA_IV.2} granted that $g$ is finite, which holds by~\cite[Proposition~5.11.1]{EGA_IV.2}. The fact that $g$ factors the normalization follows from the finiteness of $g$ and from~\cite[035Q]{Stacks_Project}.
\end{proof}

\subsection{Preliminary results}

The first four results below are probably well known; we provide proofs for convenience.

\begin{proposition}\label{proposition:log_auto_of_curves}
Let $C$ be a regular projective curve over a field $k$ and $D$ a boundary such that $K_C+D$ is ample. Then $\Aut_k(C,D)$ is finite.
\end{proposition}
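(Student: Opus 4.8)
The plan is to realize $\Aut_k(C,D)$ as the group of $k$-points of a finite-type group scheme and then reduce finiteness to a dimension count. Any $\sigma\in\Aut_k(C,D)$ preserves $K_C$ canonically and $D$ by hypothesis, hence preserves $\sO_C(m(K_C+D))$; since $K_C+D$ is ample, a sufficiently divisible multiple embeds $C$ equivariantly into a projective space, so that $\Aut_k(C,D)=G(k)$ where $G:=\underline{\Aut}_k(C,D)$ is a closed subgroup scheme of the (finite-type) automorphism scheme of the polarized curve $(C,\sO_C(m(K_C+D)))$. In particular $G$ is of finite type over $k$, and it suffices to prove $\dim G=0$: a finite-type $k$-group scheme of dimension zero is finite, and then a fortiori $G(k)$ is finite.

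When $C$ is smooth over $k$ I would bound $\dim G$ by a trichotomy on $\deg K_C=2p_a(C)-2$. If $\deg K_C>0$, then $\omega_C$ is already ample and $\Aut_k(C)$ is finite by the classical finiteness of automorphisms of a curve of genus $\geq 2$, so $G$ is finite. If $\deg K_C=0$, ampleness of $K_C+D$ forces $\deg D>0$, so the support $\Gamma:=\Supp D$ is a nonempty finite set of closed points; here the identity component satisfies $G^{0}\subseteq\underline{\Aut}_C^{0}=\pico_C$ and acts by translations, a translation carrying the nonempty finite set $\Gamma$ to itself lies in a finite subset of $\pico_C(k)$, and the component group of $\underline{\Aut}_C$ is finite, so $G$ is again finite. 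Finally if $\deg K_C<0$, then $p_a(C)=0$, $C$ is a smooth conic with $\deg(-K_C)=2$, and ampleness gives $\deg D>2$; since the coefficients of $D$ lie in $[0,1]$ the geometric support of $D$ contains at least three points, and any element of $G$ fixes $\geq 3$ geometric points of a conic and hence lies in a finite group. Equivalently, the three cases are unified by the infinitesimal computation $\mathrm{Lie}(G)=H^0(C,\mathcal{T}_C(-\Gamma))=0$, which holds because $\deg\mathcal{T}_C(-\Gamma)=-\deg K_C-\deg\Gamma\leq-\deg(K_C+D)<0$ and a line bundle of negative degree on an integral projective curve has no sections; vanishing of the Lie algebra makes $G$ unramified, hence étale, hence finite.

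The delicate point, and the one I expect to be the main obstacle, is that over an imperfect field $k$ the regular curve $C$ need not be smooth: $\Omega_{C/k}$ may carry torsion, so that $\mathcal{T}_C$ acquires unexpected global sections and $\underline{\Aut}_C$ can be non-reduced or positive-dimensional, and the clean identification $\mathcal{T}_C\cong\sO_C(-K_C)$ underlying the computation above breaks down. I would handle this by base-changing to the perfect closure $k^{1/p^{\infty}}$ and replacing $C$ by the normalization $\tilde{C}$ of $(C\times_k k^{1/p^{\infty}})_{\mathrm{red}}$, a smooth projective curve to which the trichotomy applies, exploiting the injection $\Aut_k(C,D)\hookrightarrow\Aut_{k^{1/p^{\infty}}}(\tilde{C},\tilde{D})$ together with a check that the positivity of the pulled-back boundary still forces the required geometric conclusions (nonempty support in the degree-zero case, at least three geometric points in the genus-zero case). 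This reduction to the smooth case, rather than the infinitesimal argument, is what makes the statement robust precisely when the automorphism scheme $\underline{\Aut}_C$ itself is badly behaved; controlling the change in $K_C+D$ under the normalization of the inseparable base change is the step that requires genuine care.
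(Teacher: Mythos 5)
Your central mechanism---realizing $\Aut_k(C,D)$ as the $k$-points of a finite-type group scheme $G$ inside $\PGL$ of the space of sections of $m(K_C+D)$, and then killing $G$ by the vanishing of its Lie algebra $H^0(C,T_C\otimes I_{\lceil D\rceil})=H^0(C,\sO_C(-K_C-\lceil D\rceil))=0$, forced by anti-ampleness of $-(K_C+D)$---is precisely the paper's proof. The genus trichotomy you run in the smooth case is subsumed by that single computation and adds nothing (and the genus-zero step is slightly misstated: an element of $G$ permutes, rather than fixes, the three geometric points of $\Supp D$; one passes to a finite-index subgroup).

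The genuine gap is in the non-smooth case, which is exactly the case you single out as the main difficulty and then do not resolve. The reduction you propose---pass to the normalization $\tilde C$ of $(C\times_k k^{1/p^\infty})_{\red}$ and apply the smooth case to $(\tilde C,\tilde D)$---fails as stated, because $K_{\tilde C}+\tilde D$ is not the pullback of $K_C+D$: by Tanaka's results (the very ones invoked in the paper's treatment of pluricanonical representations in genus zero) one has $K_{\tilde C}+E\sim g^*K_C$ for a nonzero effective divisor $E$ supported over the non-smooth locus, so $K_{\tilde C}+\tilde D\sim g^*(K_C+D)-E$ can lose ampleness, and with it the ``at least three support points'' count your genus-zero argument needs. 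A repair must exploit that the image of $\Aut_k(C,D)\hookrightarrow\Aut_{k^{1/p^\infty}}(\tilde C,\tilde D)$ automatically preserves $E$ as well, and must control the coefficients of $E$ (in genus zero Tanaka gives $E=P$ a single reduced point, which does make the count work); you acknowledge that this step ``requires genuine care'' but supply none of it, so as written your proof covers only the smooth case. For comparison, the paper makes no smooth/non-smooth dichotomy: it runs the tangent-space computation directly on $(C,D)$ over $k$, taking $T_C\otimes I_D=\sO_C(-K_C-D)$ on the regular curve $C$, and thereby avoids the base-change bookkeeping altogether.
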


\begin{proof}
We may replace $D$ with $\lceil D\rceil$. Since $K_C+D$ is ample and preserved by the elements of $\Aut_k(C,D)$, we can describe the latter group as the $k$-points of the linear algebraic group
		$$G:=\left\{\Phi\in\PGL_k H^0(C,\sO(m(K_C+D)))\mid \Phi(D)=D\right\}, \quad m\text{ divisible enough.}$$
	The tangent space of $G$ at the identity morphism is given by  $H^0(C,T_C\otimes I_D)$, see~\cite[Section~2.9]{Debarre_Higher_dimensional_algebraic_geometry}, which is trivial since $T_C\otimes I_D=\sO(-K_C-D)$ is anti-ample. It follows that $G$ is a finite group scheme, and thus $\Aut_k(C,D)$ is a finite group.
\end{proof}

\begin{lemma}\label{lemma:reduced_boundary_dlt_surface_pair}
Let $(S,\Delta)$ be a dlt surface over an arbitrary field. Then $S$ is $\bQ$-factorial, and the irreducible components of\, $\lfloor \Delta\rfloor$ are normal.
\end{lemma}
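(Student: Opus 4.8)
The plan is to treat the two assertions separately, noting that both are local around a closed point $s\in S$, and to read off everything from the minimal resolution $\mu\colon T\to S$ of the germ $(s\in S)$, whose exceptional prime divisors $E_1,\dots,E_n$ have negative definite intersection matrix $(E_i\cdot E_j)$. For $\bQ$-factoriality I would first reduce to a statement about the underlying surface singularity: since $(S,\Delta)$ is dlt, the germ $(s\in S)$ is of klt type (away from $\lfloor\Delta\rfloor$ it underlies a klt pair obtained by slightly decreasing the boundary coefficients, and along $\lfloor\Delta\rfloor$ it is plt). I would then invoke that such surface singularities are rational over an arbitrary field, as established in Tanaka's work \cite{Tanaka_MMP_and_abundance_for_positive_characteristic_log_surfaces}, and that rational surface singularities are $\bQ$-factorial. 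Concretely, negative definiteness lets one solve, for any prime Weil divisor $D$ through $s$, the linear system determining the numerical pullback $\mu^{\#}D=\widetilde D+\sum_i c_iE_i$ with $c_i\in\bQ$ and $\mu^{\#}D\cdot E_j=0$ for all $j$; rationality of the singularity (i.e. $R^1\mu_*\sO_T=0$) kills the obstruction to descending a suitable integral multiple of $\mu^{\#}D$ to a Cartier divisor on $S$, which is exactly $\bQ$-Cartierness of $D$.

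For the normality of the components of $\lfloor\Delta\rfloor$, fix such a component $C$. As $S$ is a surface, $C$ is a curve, and normality is equivalent to regularity. By the snc characterization of dlt pairs (see \cite[\S 2.3]{Kollar_Singularities_of_the_minimal_model_program}), there is a finite set $\Sigma\subset S$ such that $(S\setminus\Sigma,\Delta|_{S\setminus\Sigma})$ is snc; in particular $C$ is regular away from $\Sigma$, and at the points of $C$ where it meets the other components of $\lfloor\Delta\rfloor$ the pair is snc, so $C$ is regular there as well. It remains to check regularity of $C$ at each point $s\in C\cap\Sigma$ that is a non-normal point of $C$. Here I would argue by adjunction: writing $\nu\colon C^{n}\to C$ for the normalization, the different $\Diff_{C^{n}}(\Delta-C)$ recalled via \autoref{lemma: basic properties of normalization of demi-normal scheme} picks up, at each point of $C^{n}$ lying over a non-normal point of $C$, a coefficient that is $\geq 1$. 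Since $(S,\Delta)$ is lc the different is a boundary, so such coefficients equal $1$; but a coefficient $1$ produces an lc place whose center is contained in the non-snc locus $\Sigma$, which is excluded by the dlt hypothesis. Hence no non-normal point can occur, $\nu$ is an isomorphism, and $C$ is regular.

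The main obstacle I anticipate is making the positive-characteristic bookkeeping rigorous, rather than the overall strategy. The two delicate inputs are the rationality (hence $\bQ$-factoriality) of klt and plt surface singularities over a possibly imperfect field, where several characteristic-zero vanishing theorems are unavailable, and the precise computation of the coefficients of $\Diff_{C^{n}}(\Delta-C)$ at the singular points, where inseparable phenomena must be handled with care. For the former I would rely on the surface singularity theory of \cite{Tanaka_MMP_and_abundance_for_positive_characteristic_log_surfaces}, and for the latter on the explicit description of the different and of the conductor recalled in \autoref{lemma: basic properties of normalization of demi-normal scheme}, which suffices to control the non-normal contributions in the low-dimensional situation at hand.
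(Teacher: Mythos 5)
Your route to $\bQ$-factoriality is essentially the standard one (dlt surface germs are of klt type, hence rational, hence $\bQ$-factorial by Lipman's argument with the negative definite intersection matrix), and it is what lies behind the reference the paper actually uses; note however that over an arbitrary, possibly imperfect, field the correct citation is Tanaka's work on excellent surfaces (\cite[4.11]{Tanaka_MMP_for_excellent_surfaces}), not \cite{Tanaka_MMP_and_abundance_for_positive_characteristic_log_surfaces}, which is written over algebraically closed fields. For normality of the components of $\lfloor\Delta\rfloor$ you take a genuinely different route: the paper reduces to $\Delta=\lfloor\Delta\rfloor$ irreducible and reruns the vanishing-theorem proof of \cite[5.51]{Kollar_Mori_Birational_geometry_of_algebraic_varieties}, with the relative vanishing \cite[3.2]{Tanaka_MMP_for_excellent_surfaces} replacing Kawamata--Viehweg, so that $\sO_C\to g_*\sO_{\widetilde C}$ is shown to be surjective directly; you instead argue through adjunction and the different. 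Your approach is more classification-theoretic and avoids invoking a vanishing theorem, but it shifts the burden onto adjunction theory for excellent surfaces.

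That shift hides the one real gap in your write-up: the step ``a coefficient $1$ produces an lc place whose center is contained in the non-snc locus'' is asserted, not proved. That the conductor of $C^n\to C$ contributes at least $1$ to $\Diff_{C^n}(\Delta-C)$ over each non-normal point is fine (though your citation of \autoref{lemma: basic properties of normalization of demi-normal scheme}, which concerns normalizations of demi-normal schemes, is off target --- the relevant statements are \cite[2.31, 2.35]{Kollar_Singularities_of_the_minimal_model_program}). But passing from ``$\Diff_{C^n}(\Delta-C)$ has a coefficient equal to $1$ at a point over $s$'' to ``$s$ is the center of a divisor of discrepancy $-1$'' is precisely the nontrivial (precise inversion of adjunction) direction: a priori the coefficient $1$ could be assembled from several contributions each of discrepancy $>-1$ together with fractional components of $\Delta$ through $s$, in which case no lc center arises and dlt-ness is not contradicted. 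For surfaces this implication is true and can be extracted from the classification of lc surface germs on the minimal resolution, valid in the excellent setting, but it must be cited or proved; as written, your argument establishes only that $(S,\Delta)$ fails to be plt near $s$, which by itself does not contradict the dlt hypothesis. Supplying that step (or reverting to the connectedness/vanishing argument of \cite[5.51]{Kollar_Mori_Birational_geometry_of_algebraic_varieties} as the paper does) closes the proof.
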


\begin{proof}
The $\bQ$-factorial property is proved in~\cite[Corollary~4.11]{Tanaka_MMP_for_excellent_surfaces}. Hence to show that the components of $\lfloor\Delta\rfloor$ are normal, we may assume that $\Delta=\lfloor \Delta\rfloor$ is irreducible. Then we can repeat the proof of~\cite[Proposition~5.51]{Kollar_Mori_Birational_geometry_of_algebraic_varieties}, using~\cite[Proposition~3.2]{Tanaka_MMP_for_excellent_surfaces} instead of~\cite[Corollary~2.68]{Kollar_Mori_Birational_geometry_of_algebraic_varieties}.
\end{proof}

\begin{lemma}\label{lemma:zero_dim_strata_on_fibers}
Let $(S,\Delta)$ be a dlt surface over an arbitrary field such that $K_S+\Delta$ is big and semi-ample, with associated birational proper morphism $\varphi\colon S\to T$. Let $t\in T$ and $\Theta$ be the $($possibly empty\,$)$ maximal sub-curve of $\Delta^{=1}$ contained in $\varphi^{-1}(t)$. Then the minimal strata of\, $\Theta$ are all isomorphic to each other.
\end{lemma}

\begin{proof}
We may of course assume that $\Theta$ is non-empty. By~\cite[Proposition~2.36]{Kollar_Singularities_of_the_minimal_model_program}, the curve $\Theta=\sum_{i=1}^n\Theta_i$ is connected, and it has regular irreducible components by \Cref{lemma:reduced_boundary_dlt_surface_pair}. If $n=1$, the unique stratum is $\Theta$, and there is nothing to show. So we assume that $n\geq 2$: by connectedness, the minimal strata are the zero-dimensional ones. Write $\Diff_{\Theta_i}(\Delta-\Theta_i)=\Delta_{\Theta_i}$, so that we have $(K_S+\Delta)|_{\Theta_i}=K_{\Theta_i}+\Delta_{\Theta_i}\sim 0$. We also let $k_i=H^0(\Theta_i,\sO_{\Theta_i})$.

Fix an index $i$, and let $J(i)= \{1\leq j\leq n\mid j\neq i \text{ and } \Theta_i\cap\Theta_j\neq \emptyset\}$. Notice that $|J(i)|>0$ by connectedness and our current assumption $n\geq 2$. By~\cite[Theorem~2.31]{Kollar_Singularities_of_the_minimal_model_program}, the intersections $\Theta_i\cap \Theta_j=\{p_j\}$ are transversal, and $\sum_{j\in J(i)}p_j\leq \Delta_{\Theta_i}$. Since $K_{\Theta_i}\sim -\Delta_{\Theta_i}$, we see that $K_{\Theta_i}$ has negative degree. Thus by~\cite[Section~10.6]{Kollar_Singularities_of_the_minimal_model_program} we have $\deg_{k_i}K_{\Theta_i}=-2$. As $\deg_{k_i}p_j\geq 1$, we deduce that $|J(i)|\leq 2$, with equality if and only if $k(p_{j_1})=k_i=k(p_{j_2})$.

It is now easy to show, for example by induction on $n\geq 2$, that if $\Theta$ has more than one zero-dimensional stratum, then each one of them is contained in a $\Theta_i$ with $|J(i)|=2$. By connectedness, we obtain that any two zero-dimensional strata are isomorphic.
\end{proof}

\begin{proposition}[Castelnuovo's contraction criterion]\label{proposition:Castelnuovo_criterion}
Let $S\to B$ be a projective morphism from a surface to a Noetherian scheme that is quasi-projective over a field or a DVR. Let $C\subset S$ be a proper integral curve that is vertical over $B$. Assume that $S$ is regular in a neighborhood of\, $C$, that $K_S\cdot C<0$ and $C^2<0$. Then:   
	\begin{enumerate}
		\item $C$ is regular. 
		\item If\, $k=H^0(C,\sO_C)$, then $K_S\cdot_k C=-1=C\cdot_k C$.
		\item There exists a birational proper $B$-morphism $\varphi\colon S\to S'$ to a projective $B$-surface, that contracts $C$ to a regular point, and such that $\varphi\colon S\setminus C\to S'\setminus \varphi(C)$ is an isomorphism.
	\end{enumerate}
\end{proposition}

\begin{proof}
First we show that $C$ is regular. Let $C^\nu$ be its normalization with $k'=H^0(C^\nu,\sO_{C^\nu})$. Then by adjunction, we have
		$$K_S+C|_{C^\nu}\sim K_{C^\nu}+\Diff(0).$$
The divisor $\Diff(0)$ is effective, and since $K_S+C$ is Cartier (in a neighborhood of $C$), we obtain that $\Diff(0)$ is a $\bZ$-divisor; see~\cite[Proposition~2.35]{Kollar_Singularities_of_the_minimal_model_program}. By assumption, the degree of $K_S+C|_C$ is negative, and therefore the degree of $K_S+C|_{C^\nu}$ is also negative; see~\cite[Corollary~7.5.8]{Liu_Ag_and_arithmetic_curves}. By~\cite[Section~10.6]{Kollar_Singularities_of_the_minimal_model_program}, it follows that $\deg_{k'}K_{C^\nu}=-2$. Therefore, $\Diff(0)$ is either empty or a single point with coefficient one. In both cases, $(C^\nu,\Diff(0))$ is an lc curve, and by inversion of adjunction for surfaces, see~\cite[Theorem~5.1]{Tanaka_MMP_for_excellent_surfaces}, we obtain that $(S,C)$ is lc in a neighborhood of $C$. If $C$ is not regular, we see by~\cite[Theorem~2.31]{Kollar_Singularities_of_the_minimal_model_program} that it has a node; but then $\Diff(0)$ would be the sum of two distinct points with coefficient one. Thus we see that $C$ is regular.

Let $k=k'=H^0(C,\sO_C)$. We denote by $\chi_k$ the Euler characteristic, where the dimensions are taken over $k$. Since $C$ is regular, by the adjunction formula we have
		\begin{eqnarray}\label{eqn:adjunction}
		-2\chi_k(\sO_C)=\deg_k K_C=(K_S+C)\cdot_k C<0.
		\end{eqnarray}
We also claim that $\deg_k C^2=-h^0(C,\sO_C)$. We have $0>K_S\cdot_k C=\deg_k K_C-C^2$, which implies that $\deg_k K_C<0$ and thus $h^1(C,\sO_C)=0$. By~\cite[Section~10.6]{Kollar_Singularities_of_the_minimal_model_program}, we obtain that $\deg_k K_C=-2$. Looking again at \Cref{eqn:adjunction}, we find that $h^0(C,\sO_C)=-1=\deg_k C^2$, as claimed. 
		
By a generalization of Castelnuovo's criterion, see~\cite[Theorem~27.1]{Lipman_Rational_sing_and_applications_to_surfaces}, the facts that $\chi(\sO_C)>0$ and $-h^0(\sO_C)=C^2$ imply that there exists a proper morphism $\varphi\colon S\to S'$ as in the third statement.
\end{proof}

The next two results study the pluricanonical representations on regular curves of genus zero.

\begin{lemma}\label{lemma:change_of_fields_and_representations}
Let $X$ be a proper variety over an arbitrary field $k$ and $L$ a line bundle on $X$. Write $K:=H^0(X,\sO_X)$. If the natural representation $\rho_K\colon\Aut_K(X,L)\to \GL_K H^0(X,L)$ has finite image, then so does $\rho_k\colon \Aut_k(X,L)\to \GL_k H^0(X,L)$.
\end{lemma}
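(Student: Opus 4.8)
The plan is to exploit the field structure of $K=H^0(X,\sO_X)$ and to compare the two representations via a restriction-of-scalars argument together with an elementary coset count. Since $X$ is a connected reduced proper $k$-scheme, $K=H^0(X,\sO_X)$ is a finite-dimensional reduced $k$-algebra with no nontrivial idempotents, hence a finite field extension of $k$. The variety $X$ is therefore naturally a $K$-scheme via its structural morphism $X\to\Spec K$, and $H^0(X,L)$ carries a $K$-vector space structure.

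First I would record how the two automorphism groups are related. Every $\sigma\in\Aut_k(X,L)$ induces a $k$-algebra automorphism $\sigma^*$ of $K=H^0(X,\sO_X)$, yielding a homomorphism $\Aut_k(X,L)\to\Aut_k(K)$. Making the $K$-scheme structure explicit, $\sigma$ is $K$-linear exactly when it commutes with the structural morphism to $\Spec K$, that is, when $\sigma^*|_K=\id$; hence the kernel of this homomorphism is precisely $\Aut_K(X,L)$. As $K/k$ is finite, $\Aut_k(K)$ is a finite group, so $\Aut_K(X,L)$ is a subgroup of finite index in $\Aut_k(X,L)$.

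Next I would compare $\rho_k$ and $\rho_K$ on this subgroup. For $\sigma\in\Aut_K(X,L)$ the induced map on $H^0(X,L)$ is $K$-linear, so $\rho_k(\sigma)$ is just $\rho_K(\sigma)$ seen through the restriction-of-scalars injection $\GL_K H^0(X,L)\hookrightarrow\GL_k H^0(X,L)$. Consequently $\rho_k|_{\Aut_K(X,L)}$ has, up to this injection, the same image as $\rho_K$, which is finite by hypothesis.

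The passage from the subgroup to the whole group is then purely group-theoretic. Writing $\Aut_k(X,L)=\bigsqcup_{i=1}^n g_i\,\Aut_K(X,L)$ as a finite disjoint union of cosets, one gets $\rho_k(\Aut_k(X,L))=\bigcup_{i=1}^n \rho_k(g_i)\cdot\rho_k(\Aut_K(X,L))$, a finite union of translates of a finite set, hence finite. I do not expect a genuine obstacle: the only steps requiring care are the identification of $K$ as a field and the verification that the kernel of $\Aut_k(X,L)\to\Aut_k(K)$ equals $\Aut_K(X,L)$, both of which are standard once the $K$-scheme structure on $X$ is spelled out.
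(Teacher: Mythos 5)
Your proof is correct and is essentially the paper's own argument: the paper partitions $\Aut_k(X,L)$ into the fibers of the map to $\Aut_k(K)$ (which are exactly your cosets of $\Aut_K(X,L)$), translates each fiber back into $\Aut_K(X,L)$ by a chosen representative, and concludes that $\im(\rho_k)$ is a finite union of finite sets. No further comparison is needed.
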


\begin{proof}
If $\varphi\in \Aut_k(X)$, then $\varphi^*\colon K\to K$ is a $k$-linear field automorphism. This gives a partition
		$$\Aut_k(X,L)=\bigsqcup_{\sigma\in\Aut_k(K)}\Aut_k^\sigma(X,L)$$
which is finite since $k\subset K$ is a finite field extension. Notice that if $\varphi\in \Aut_k^\sigma(X,L)$ and $\psi\in \Aut_k^\varsigma(X,L)$, then $\varphi\circ \psi\in \Aut_k^{\sigma\circ \varsigma}(X,L)$. Thus $\Aut_K(X,L)=\Aut_k^{\id}(X,L)$ is a subgroup of finite index of $\Aut_k(X,L)$.

The map $\rho_k\colon \Aut_k(X,L)\to \GL_k H^0(X,L)$ is a group morphism, and therefore it sends cosets to cosets. Thus $\im(\rho_k)$ is the union of finitely many cosets of $\im(\rho_K)=\rho_k(\Aut_k^{\id}(X,L))$. Since $\im(\rho_K)$ is finite by assumption, we obtain that $\im(\rho_k)$ is finite.
\end{proof}

\begin{proposition}\label{prop:pluricanonical_rep_for_genus_zero}
Let $C$ be a regular proper curve of genus zero over an arbitrary field $k$ and $E$ an effective $\bQ$-divisor such that $K_C+E\sim_{\bQ}0$. Then for $m$ divisible enough, the natural representation $\Aut_k(C,E)\to\GL_k H^0(C,\omega_C^m(mE))$ has finite image.
\end{proposition}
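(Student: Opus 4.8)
The plan is to reduce everything to an explicit computation on $\bP^1$ over an algebraically closed field. I would first pass to the field of constants $K:=H^0(C,\sO_C)$: by \autoref{lemma:change_of_fields_and_representations} it suffices to show that $\rho_K\colon \Aut_K(C,\omega_C^m(mE))\to \GL_K H^0(C,\omega_C^m(mE))$ has finite image. Over $K$ the curve $C$ is geometrically connected, and a regular, geometrically connected proper curve of genus zero is automatically smooth (geometric non-reducedness would force the arithmetic genus to be positive), so $C$ is a smooth conic with $C_{\oK}\cong \bP^1_{\oK}$ and $\deg_K(-K_C)=2$.

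Next I would trivialize the bundle. Choosing $m$ divisible enough, we may assume $mE$ is integral and $m(K_C+E)\sim 0$; then $L:=\omega_C^m(mE)\cong \sO_C$, so $H^0(C,L)\cong H^0(C,\sO_C)=K$ is one–dimensional and $\GL_K H^0(C,L)=K^\times$. Thus $\rho_K$ becomes a character $\chi\colon \Aut_K(C,E)\to K^\times$, where the group of automorphisms preserving $E$ acts on the line of $m$-fold log-canonical forms $\eta$ (those with $\opdiv(\eta)+mE\geq 0$) by pullback. Since $\chi(g)$ may be computed after base change to $\oK$, it is enough to bound the image of the induced character $\bar\chi\colon \Aut_{\oK}(\bP^1,E_{\oK})\to \oK^\times$.

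The heart is then a short geometric case analysis according to the number of points of $\Supp(E_{\oK})$, which carries an effective boundary of degree $2$. If there are at least three points, the stabilizer $\Aut_{\oK}(\bP^1,E_{\oK})\subset \PGL_2$ is already finite and there is nothing to prove; a single point is impossible, since it would force a coefficient equal to $2>1$. In the remaining case $\Supp(E_{\oK})$ consists of two points, both necessarily of coefficient $1$, so after a change of coordinates $E_{\oK}=[0]+[\infty]$. Here $(dx/x)^{\otimes m}$ generates $H^0(L_{\oK})$, and one computes $\sigma_\lambda^*(dx/x)=dx/x$ for $\sigma_\lambda\colon x\mapsto \lambda x$ and $\iota^*(dx/x)=-dx/x$ for $\iota\colon x\mapsto 1/x$; hence $\Aut_{\oK}(\bP^1,[0]+[\infty])=\bG_m\rtimes\langle\iota\rangle$ acts through $\{\pm1\}$. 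In every case $\im(\bar\chi)$ is finite, and tracing back through the two reductions finishes the proof.

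The delicate point is exactly this two-point toric case: the automorphism group is positive-dimensional, yet the representation stays finite because the maximal torus $\bG_m$ acts trivially on the invariant log-differential $dx/x$ (the genus-zero shadow of the finiteness of \textbf{B}-representations). It is also here that the assumption that the coefficients of $E$ are $\leq 1$ is indispensable, and sharp: for $E=2[0]$ one has $\sigma_\lambda^*(dx/x^2)=\lambda^{-1}(dx/x^2)$, so $\chi(\sigma_\lambda)=\lambda^{-m}$ sweeps out all of $\oK^\times$ and the conclusion fails. I would therefore make sure the divisor $E$ is used as a boundary throughout.
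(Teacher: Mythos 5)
Your opening reduction to $K=H^0(C,\sO_C)$ via \autoref{lemma:change_of_fields_and_representations} matches the paper, and your case analysis on $\bP^1_{\oK}$ is essentially the ``usual argument'' that the paper invokes in its last sentence. However, there is a genuine gap at the step where you assert that a regular, geometrically connected, proper curve of genus zero is automatically smooth. This is false over imperfect fields of characteristic $2$: the conic $\{x^2+sy^2+tz^2=0\}\subset\bP^2_k$ over $k=\bF_2(s,t)$ is regular and proper with $H^0(\sO)=k$ and arithmetic genus $0$, yet it is geometrically non-reduced --- its base change to $\ok$ is a double line, which still has $p_a=0$, so the parenthetical justification ``geometric non-reducedness would force the arithmetic genus to be positive'' is incorrect. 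This non-smooth case is exactly the substance of the paper's proof: citing Tanaka's structure theory, one produces degree-$2$ purely inseparable extensions $k\subset l\subset k'$ such that $C_l$ is integral but non-normal with normalization $B\cong\bP^1_{k'}$, transports the representation to $B$ along an inclusion of spaces of global sections, keeps track of the conductor point $P$ with $K_B+P\sim g^*K_C$, and chooses the extensions so that $P\notin\Supp(g^{-1}mE)$ before running the two-point argument. Your proof covers only the smooth case, which the paper dispatches in one sentence as well known.

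A second, related issue: even when $C$ is smooth, arguing entirely over $\oK$ is too coarse in characteristic $2$. A coefficient-$1$ point of $E$ whose residue field is a degree-$2$ inseparable extension of $K$ base-changes to a \emph{single} geometric point with coefficient $2$, so the case you declare impossible does occur for a boundary $E$, and by your own computation the full stabilizer in $\PGL_2(\oK)$ then acts with infinite image on $(dx/x^2)^{\otimes m}$. The conclusion is nevertheless true there because only the $K$-rational part of that stabilizer intervenes and it fixes the $K$-rational generator $(dx/(x^2-t))^{\otimes m}$; but that computation must be carried out over $K$, not $\oK$. On the positive side, your observation that the coefficients of $E$ must be bounded by $1$ (otherwise $E=2[0]$ on $\bP^1$ over an algebraically closed field gives $\chi(\sigma_\lambda)=\lambda^{-m}$, with infinite image) is correct and identifies a hypothesis that the statement leaves implicit; the proposition is only ever applied to different divisors of lc pairs, where the coefficients do lie in $[0,1]$.
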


\begin{proof}
By \Cref{lemma:change_of_fields_and_representations}, we may replace $k$ by $H^0(C,\sO_C)$ to prove the result. If $C$ is smooth over $k$, then we may assume that $k$ is algebraically closed. Then $C\cong \bP^1_k$, and $\Supp(E)$ contains at least two points. If it actually contains more than three points, then $\Aut_k(\bP^1,E)$ is finite. If it contains exactly two points, we may assume $\Supp E=\{0,\infty\}\subset\bP^1$. If $x$ denotes the coordinate on $\bP^1$, then $\Aut_k(\bP^1,E)\cong k^*$ acts on $H^0(\bP^1,\omega_{\bP^1}(E))=k\cdot \frac{dx}{x}$ by scaling of $x$, and this action is trivial.

For the rest of the proof, we assume that $C$ is non-smooth over $k$. By~\cite[Proposition~9.8 and Theorem~9.10]{Tanaka_Invariants_of_varieties_over_imperfect_fields}, it holds that $\Char k=2$, and we can find degree two purely inseparable extensions $k\subset l\subset k'$ such that $C_l=C\otimes_kl$ is integral with non-isomorphic normalization $B\cong \bP^1_{k'}$:
	$$\begin{tikzcd}
	B\arrow[d, "\nu"]\arrow[drr, "g"] && \\
	C_l\arrow[rr, "f" below] && C\rlap{.}
	\end{tikzcd}$$
Moreover, there exists a $k'$-rational point $P\in B(k')$ such that $K_B+P\sim g^*K_C$. Thus if $m>0$ is such that $mE$ is a $\bZ$-divisor, then $mK_B+mP+g^{*}(mE)\sim g^*(mK_C+mE)\sim 0$. Since $C_l$ is reduced, we have an inclusion $\sO_{C_l}\subset \nu_*\sO_B$. Tensoring with $f^*\omega_C^m(mE)$ and using the projection formula, we obtain an inclusion $f^*\omega_C^m(mE)\subset \nu_*g^*\omega_C^m(mE)$. Taking global section, we get a sequence of inclusions
		\begin{equation}\label{eqn:inclusion_pullback_global_sections}
		H^0\left(C,\omega_C^m(mE)\right)\subset H^0\left(C,\omega_C^m(mE)\right)\otimes_k l\subset H^0\left(B, \omega_B^m(mP+mg^{*}(E))\right).
		\end{equation}
On the other hand, extending scalars along $k\subset l$ gives a natural map 
			$$g^*\colon \Aut_k(C,E)\to \Aut_{l}\left(B,P+g^{*}(E)\right),$$ 
	whose image respects the flag \eqref{eqn:inclusion_pullback_global_sections}. Therefore, it is sufficient to show that the representation of $\Aut_l(B,P+g^*(E))$ on $H^0(B,\omega_B^m(mP+mg^{*}(E)))$ is finite. By \Cref{lemma:change_of_fields_and_representations}, it is sufficient to prove that the representation of $\Aut_{k'}(B,P+g^*(E))$ is finite. By~\cite[Proposition~9.8]{Tanaka_Invariants_of_varieties_over_imperfect_fields}, we may choose the extensions $k\subset l\subset k'$ so that $P$ does not belong to the support of $g^{*}(E)$. Thus the support of $P+g^{*}(E)$ contains at least two points, and we may apply the argument of the first paragraph.
\end{proof}

The next proposition summarizes useful results contained in the proofs of~\cite[Proposition~5.2.9 and Lemma~5.2.12]{Posva_Gluing_for_surfaces_and_threefolds}.

\begin{proposition}\label{proposition:useful_results_from_gluing_article}
Let $\varphi\colon S\to T$ be a projective morphism from a surface $S$ to a variety $T\!$ of dimension at most one, with $\varphi_*\sO_S=\sO_T$, over an arbitrary field. Let $\Theta$ and $\Upsilon$ be divisors such that $(S,\Theta+\Upsilon)$ is dlt, $(\Theta+\Upsilon)^{=1}=\Theta$ and $K_S+\Theta+\Upsilon\sim_{\bQ,\varphi} 0$. Let $z\in T$ be a closed point and $W,W'$ be lc centers of $(S,\Theta+\Upsilon)$ that are minimal for the property that $\varphi(W)=\{z\}=\varphi(W')$. Then: 
	\begin{enumerate}
		\item There exists a log isomorphism $(W,\Diff^*(\Theta+\Upsilon))\cong (W',\Diff^*(\Theta+\Upsilon))$.
		\item If moreover $\dim T=1$ and $W$ is a genus one curve, then $W=W'$.
	\end{enumerate}
\end{proposition}

\begin{proof}
Since $(S,\Theta+\Upsilon)$ is dlt, its lc centers are the strata of $\Theta$, and we can perform adjunction in any codimension. In this situation, we write the different with $\Diff^*$ (see~\cite[Section~5.2.1]{Posva_Gluing_for_surfaces_and_threefolds}). Thus $W$ and $W'$ are among the strata of $\Theta$ that are contained in $\varphi^{-1}(z)$. 

To begin with, we prove the existence of the log isomorphisms. First assume that $\Theta\cap \varphi^{-1}(z)$ is connected. There is nothing to show if it is zero-dimensional. If it is one-dimensional, it is a chain of regular proper curves, and the minimal lc centers are the intersections of these curves. If there is more that one minimal lc center, then by adjunction it is easy to verify that they are $k(z)$-points.

We show that if $\Theta$ does not dominate $T$, then $\Theta\cap \varphi^{-1}(z)$ is connected (and so there exists a log isomorphism by the previous paragraph). We follow the method of the last part of the proof of~\cite[Lemma~5.2.12]{Posva_Gluing_for_surfaces_and_threefolds}, so we only sketch the argument. Notice that $\dim T=1$. We run a $(K_S+\Upsilon)$-MMP over $T$, which ends with a birational model~\cite{Tanaka_MMP_for_excellent_surfaces}:
		$$\begin{tikzcd}
		(S,\Theta+\Upsilon) \arrow[rr, "g"]\arrow[dr, "\varphi" below left] && (S',\Theta'+\Upsilon') \arrow[dl, "\varphi'"] \\
		& T\rlap{.} &
		\end{tikzcd}$$
Then $g$ is crepant, see~\cite[Remark~5.1.14]{Posva_Gluing_for_surfaces_and_threefolds}; $K_{S'}+\Upsilon'$ is $\varphi'$-nef; $(\Theta')^2\leq 0$, see~\cite[Lemma~5.1.5]{Posva_Gluing_for_surfaces_and_threefolds}; and we can write $\Theta'=(\varphi')^*N+E$ for $E>0$ vertical and a $\bQ$-divisor $N$ on $T$. Taking in account that $K_{S'}+\Upsilon'$ is numerically equivalent to the vertical divisor $-\Theta'$, we obtain
		$$0\geq \left(\Theta'\right)^2=(K_{S'}+\Upsilon')\cdot \left(\left(\varphi'\right)^*N+E\right)\geq 0,$$
	and thus $\Theta'$ is a reduced fiber; see~\cite[Lemma~5.1.5]{Posva_Gluing_for_surfaces_and_threefolds}. Therefore, $\Theta'\cap (\varphi')^{-1}(z)$ is connected, and since $g$ is crepant, it follows that $\Theta\cap \varphi^{-1}(z)$ is also connected; see~\cite[Lemma~5.1.13]{Posva_Gluing_for_surfaces_and_threefolds}.

To conclude the first point, we need to produce log isomorphisms in the case where $\Theta\cap \varphi^{-1}(z)$ is not connected and $\Theta$ does dominate $T$. We follow the method of~\cite[Proposition~5.2.9]{Posva_Gluing_for_surfaces_and_threefolds}; once again we only sketch the argument. Base-changing along an \'{e}tale morphism $(z'\in T')\to (z\in T)$ with $k(z)=k(z')$, we may assume that the different connected components of $\Theta\cap \varphi^{-1}(z)$ belong to different connected components of~$\Theta$ and that these components are all horizontal over $T$. We run a $(K_S+\Upsilon)$-MMP over $T$; it terminates with a Fano contraction, see~\cite{Tanaka_MMP_for_excellent_surfaces}:
		$$\begin{tikzcd}
		(S,\Theta+\Upsilon) \arrow[r, "f"]\arrow[d, "\varphi"] & (S'',\Theta''+\Upsilon'')\arrow[d, "p"] \\
		T & B\rlap{.}\arrow[l]
		\end{tikzcd}$$
Notice that $f$ is crepant; see~\cite[Remark~5.1.14]{Posva_Gluing_for_surfaces_and_threefolds}. By~\cite[Lemma~5.1.13]{Posva_Gluing_for_surfaces_and_threefolds}, the morphism $f$ induces a bijection between the connected components of $\Theta$ and $\Theta''$. There is a component $D_1\subset \Theta''$ that is $p$-ample. By assumption, there is another component $D_2\subset \Theta''$ that is disjoint from $D_1$. Take a curve $C$ that intersects $D_2$ and is contracted by $p$; then $D_2\cdot C<0$ cannot happen, for otherwise $D_1\cap D_2\neq \emptyset$. Thus $D_2$ is also $p$-ample. A similar argument shows that the fibers of $p$ are one-dimensional. Hence if $\eta$ is the generic point of $B$, then $S''_\eta$ is a regular proper curve with $k(\eta)=H^0(S''_\eta, \sO_{S''_\eta})$. Since 
		$$0=(K_{S''}+\Theta''+\Upsilon'')|_{S''_\eta},$$
we see that $\deg_{k(\eta)}K_{S''_\eta}=-2$, see~\cite[Section~10.6]{Kollar_Singularities_of_the_minimal_model_program}, and therefore $D_1|_{S''_\eta}$ and $D_2|_{S''_\eta}$ are $k(\eta)$-points. It follows that $D_1\to B$ and $D_2\to B$ are isomorphisms. We also deduce from the equality that, up to shrinking $T$ around $z$, we have $\Theta''=D_1+D_2$. Therefore, we obtain log isomorphisms on $S''$, and since $f$ is crepant, we get log isomorphisms on $S$ (see for example~\cite[Corollary~5.2.5]{Posva_Gluing_for_surfaces_and_threefolds}).

Let us now prove the second point. If $\Theta$ does not dominate $T$, then we have seen that $\Theta\cap \varphi^{-1}(z)$ is connected, and by adjunction it must be equal to $W$. Assume that $\Theta$ dominates $T$. We run a few steps of the $(K_S+\Upsilon)$-MMP over $T$, contracting the curves in $\varphi^{-1}(z)$ that do not belong to $\Theta$. Every such curve is eventually contracted since $K_S+\Upsilon\sim_{\bQ,\varphi}-\Theta$. Therefore, we reach a birational model where the transform of $W$ intersects the transform of $\Theta$. Since each step of the MMP is crepant for $(S,\Theta+\Upsilon)$, see~\cite[Remark~5.1.14]{Posva_Gluing_for_surfaces_and_threefolds}, it follows that $\Diff^*_W(\Theta+\Upsilon)\neq 0$, which contradicts adjunction. Thus $\Theta\cap \varphi^{-1}(z)$ actually does not contain a genus one curve.
\end{proof}

\section{Abundance for surfaces in positive characteristic}
\subsection{Absolute case}

We begin with abundance in the absolute case. 

\begin{theorem}\label{theorem:abundance_absolute_case}
Let $(S_0,\Delta_0)$ be a projective slc surface pair over an arbitrary field $k$ of positive characteristic. Assume that $K_{S_0}+\Delta_0$ is nef; then it is semi-ample.
\end{theorem}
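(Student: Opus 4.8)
Write $(S,\Delta):=(S_0,\Delta_0)$. The plan is to descend semi-ampleness from the normalization, following the strategy of \cite{Hacon_Xu_Finiteness_of_B_representations_and_slc_abundance}. First I would remove the inseparable nodes: by \autoref{prop:structure_of_normalization} the normalization factors as $\bar{S}\xrightarrow{\nu}\tilde{S}\xrightarrow{F}S$ with $F$ finite and purely inseparable, and $\tilde{S}$ demi-normal with only separable nodes. Pulling back $K_S+\Delta$ along $F$ produces a boundary $\tilde{\Delta}$ making $(\tilde{S},\tilde{\Delta})$ an slc pair with only separable nodes and $F^*(K_S+\Delta)=K_{\tilde{S}}+\tilde{\Delta}$. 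Since $F$ is a finite universal homeomorphism, semi-ampleness is unaffected by pullback along $F$: one implication is immediate, and for the other a large enough Frobenius power of the morphism defined by $F^*(K_{\tilde{S}}+\tilde{\Delta})$ descends to $S$ because $F$ is purely inseparable. Hence I may assume $S$ has only separable nodes.

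Let $\pi\colon\bar{S}\to S$ be the normalization with log structure $(\bar{S},\bar{D}+\bar{\Delta})$, so that $\pi^*(K_S+\Delta)=K_{\bar{S}}+\bar{D}+\bar{\Delta}$ is nef and lc by \autoref{lemma: basic properties of normalization of demi-normal scheme}. Tanaka's abundance for lc surfaces over arbitrary fields \cite{Tanaka_Abundance_for_lc_surfaces_over_imperfect_fields} makes $K_{\bar{S}}+\bar{D}+\bar{\Delta}$ semi-ample, so for $m$ divisible enough it defines a fibration $\varphi\colon\bar{S}\to\bar{T}$ with $m(K_{\bar{S}}+\bar{D}+\bar{\Delta})=\varphi^*H$ for an ample divisor $H$ on $\bar{T}$. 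Let $\tau$ be the log involution of $(\bar{D}^n,\Diff_{\bar{D}^n}\bar{\Delta})$ of \autoref{lemma:normalization_gives_involution}, and let $\iota\colon\bar{D}^n\to\bar{S}$ be the natural map. I form the two morphisms $\varphi_1:=\varphi\circ\iota$ and $\varphi_2:=\varphi\circ\iota\circ\tau$ from $\bar{D}^n$ to $\bar{T}$. Note that $S$ is the geometric quotient of $\bar{S}$ by the finite relation gluing $\bar{D}^n$ to itself along $\tau$; the aim is to push this relation down to $\bar{T}$ and take the quotient there.

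The heart of the matter is to show that the set-theoretic equivalence relation $R$ on $\bar{T}$ generated by the image of $(\varphi_1,\varphi_2)\colon\bar{D}^n\to\bar{T}\times\bar{T}$ is finite; the danger, as in \cite[9.3]{Kollar_Singularities_of_the_minimal_model_program}, is that imposing transitivity creates infinite equivalence classes. I would treat the possibilities $\dim\bar{T}\in\{0,1,2\}$ separately, the substantial case being when $\varphi$ contracts components of $\bar{D}^n$. By adjunction $\iota^*(K_{\bar{S}}+\bar{D}+\bar{\Delta})=K_{\bar{D}^n}+\Diff_{\bar{D}^n}\bar{\Delta}$, so every component of $\bar{D}^n$ contracted by $\varphi$ is a regular genus-zero curve on which $K_{\bar{D}^n}+\Diff_{\bar{D}^n}\bar{\Delta}\sim_{\bQ}0$, and $\tau$ preserves this log divisor. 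The finiteness of the pluricanonical representation in \autoref{prop:pluricanonical_rep_for_genus_zero} then bounds the way $\tau$ permutes the sections cutting out $\varphi$ on such fibers, which is exactly the finiteness of \textbf{B}-representations that forces the orbits of $R$ to stay finite.

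Once $R$ is finite, the geometric quotient $q\colon\bar{T}\to T:=\bar{T}/R$ exists as a $k$-scheme by \cite{Kollar_Quotients_by_finite_equivalence_relations}. The identity $\varphi_1^*H=m(K_{\bar{D}^n}+\Diff_{\bar{D}^n}\bar{\Delta})=\varphi_2^*H$, valid because $\tau$ fixes the pluricanonical bundle, shows that $H$ is $R$-invariant, and the same pluricanonical finiteness lets $H$ descend, after enlarging $m$, to an ample $\bQ$-Cartier divisor $H_T$ on $T$ with $q^*H_T\sim_{\bQ}H$. By construction $q$ coequalizes $\varphi_1$ and $\varphi_2$, so $q\circ\varphi$ is constant on the classes of the gluing relation defining $S$; the universal property of the quotient $\pi$ then yields a morphism $\psi\colon S\to T$ with $q\circ\varphi=\psi\circ\pi$. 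Pulling back gives $\pi^*\psi^*H_T\sim_{\bQ}\varphi^*H=\pi^*\bigl(m(K_S+\Delta)\bigr)$, and since $\pi$ is a finite quotient this descends to $\psi^*H_T\sim_{\bQ}m(K_S+\Delta)$. As $H_T$ is semi-ample and $\psi$ is a morphism, $m(K_S+\Delta)$ is semi-ample, as desired. I expect the finiteness of $R$ to be the main obstacle: all the positive-characteristic and imperfect-field subtleties are concentrated in controlling the pluricanonical representation on the genus-zero fibers of $\varphi|_{\bar{D}^n}$, precisely what \autoref{prop:pluricanonical_rep_for_genus_zero} supplies.
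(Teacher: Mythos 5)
Your overall strategy coincides with the paper's (descend semi-ampleness along the normalization à la Hacon--Xu, after killing the inseparable nodes by a universal homeomorphism), and the setup through the fibration $\varphi\colon\bar S\to\bar T$ and the two maps $\bar D^n\rightrightarrows\bar T$ is correct. But the finiteness argument, which you yourself identify as the heart of the matter, is aimed at the wrong place. The components of $\bar D^n$ \emph{contracted} by $\varphi$ each map to a single point of $\bar T$, so by themselves they only identify finitely many points; they are not where infinite orbits could arise. The danger comes from the \emph{non-contracted} components: when $\dim\bar T_i=1$ a component $\Gamma$ of $\bar D^n$ can map onto $T_i$ with degree $2$, and composing the resulting fiber-identifications with the involution $\tau$ (which may send $\Gamma$ to a different component over a different $T_j$) can a priori generate an infinite group of identifications on $\bar T$. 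The paper controls this by first establishing the structure of the horizontal boundary (\autoref{claim:horizontal_components}: at most two components per fiber, degree $\le 2$, Galois when degree $2$, realized by log automorphisms $\xi$ of $(\Gamma,\Delta_\Gamma)$ as in \autoref{claim:log_involution_over_sources}), and then invoking finiteness of $\Aut_k(\Gamma,\Delta_\Gamma)$ for $K_\Gamma+\Delta_\Gamma$ \emph{ample} (\autoref{proposition:log_auto_of_curves}) to see that $\langle\tau,\{\xi\}\rangle$ is a finite group. None of this appears in your sketch, and \autoref{prop:pluricanonical_rep_for_genus_zero} does not substitute for it: that proposition concerns curves with $K+E\sim_{\bQ}0$, i.e.\ the contracted components, and in the paper it is used only at the descent stage, not for finiteness. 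Note also that your assertion that every contracted component is a regular genus-zero curve is false: a contracted component can have genus one (with trivial different), and the paper must treat that case separately (\autoref{claim:genus_one_curves_in_fibers}).

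The descent of $H$ is likewise glossed over in a way that hides real content. Knowing $\varphi_1^*H\sim\varphi_2^*H$ makes the divisor \emph{class} invariant, but to descend a Cartier divisor along a finite quotient one must (as in \cite[5.38]{Kollar_Singularities_of_the_minimal_model_program}) lift the equivalence relation to the total space $T_H$ of $H$ and prove \emph{that} relation is finite. This is exactly where the fibers of $j\colon E\to T$ over the special set $Z$ (points with several minimal lc centers above them) enter: one needs the crepant-birational identification of those centers (\autoref{claim:uniqueness_of_springs}) and then finiteness of the pluricanonical representations on them --- finite for $0$-dimensional centers, supplied by \autoref{prop:pluricanonical_rep_for_genus_zero} for genus-zero curves, and circumvented for genus-one curves by \autoref{claim:genus_one_curves_in_fibers}. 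Your final step (descending $\psi^*H_T\sim_{\bQ}m(K_S+\Delta)$ from its pullback to $\bar S$) is essentially the paper's last claim and is fine in spirit, though the paper justifies it via a reflexivity/conductor argument rather than by assertion.
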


For the duration of the proof, we fix $(S_0,\Delta_0)$ and let $(S,D+\Delta)$ be its normalization. Write $D=D_G+D_I$, where $D_G$ is the preimage of the separable nodes of $S_0$ and $D_I$ is the preimage of the inseparable ones. Let $\tau$ be the induced log involution of $(D_G^n,\Diff_{D_G^n}(\Delta+D_I))$.

We emphasize that $S_0$ is not assumed to be irreducible. Thus $(S,\Delta+D)=\bigsqcup_{i=1}^N (S_i,\Delta_i+D_i)$ is the disjoint union of its normal irreducible components.

\bigskip
We divide the proof in several steps.

\subsubsection{Reduction to separable nodes}\label{section:reduction_to_sep_nodes_in_abs_case}
This step is only necessary if $\Char k=2$. Applying \Cref{prop:structure_of_normalization}, we get a factorization 
		$$(S,D+\Delta)\longrightarrow (S',D_I'+\Delta')\overset{\mu}{\longrightarrow} (S_0,\Delta_0),$$
where $\mu$ is finite purely inseparable and $(S',D_I'+\Delta')$ is slc with only separable nodes. By~\cite[Lemma~2.11.(3)]{Cascini_Tanaka_Relative_semi_ampleness_in_pos_char},
if $K_{S'}+D_I'+\Delta'=\mu^*(K_{S_0}+\Delta_0)$ is semi-ample, then so is $K_{S_0}+\Delta_0$. Thus it suffices to study $(S',D_I'+\Delta')$, and so we may assume that $S_0$ has only separable nodes.
		
	
\subsubsection{Quotienting the fibration} The pullback $K_S+D+\Delta$ is nef by assumption, so it is semi-ample by~\cite{Tanaka_Abundance_for_lc_surfaces_over_imperfect_fields}. Choose $m>0$ even such that $m(K_S+D+\Delta)$ is base-point-free, and let $\varphi\colon S\to T$ be the corresponding fibration onto a normal projective variety. We let $H$ be the hyperplane Cartier divisor on $T$ with the property that $\varphi^*\sO(H)=\sO(m(K_S+D+\Delta))$.

Since $S_0$ has only separable nodes, it is the geometric quotient of its normalization $S$ by the finite equivalence relation induced by the involution $\tau$ on $(D^n,\Diff_{D^n}\Delta)$. This equivalence relation is generated by the two morphisms $(\iota,\iota'=\iota\circ \tau)\colon D^n\rightrightarrows S$. Let $\psi\colon D^n\to E$ be the fibration corresponding to the base-point-free divisor $m(K_S+D+\Delta)|_{D^n}$. Then we have a diagram
		\begin{equation}\label{eqn:Stein_factorization_of_quotient_fibr}
		\begin{tikzcd}
		D^n\arrow[rr, shift left, "\iota"]\arrow[rr, shift right, "\iota'" below]\arrow[d, "\psi"] && S\arrow[d, "\varphi"] \\
		E\arrow[rr, shift left, "j"]\arrow[rr, shift right, "j'" below] && T\rlap{,}
		\end{tikzcd}
		\end{equation}
where $(\psi,j)$ (resp.\ $(\psi,j')$) is the Stein factorization of $\varphi\circ \iota$ (resp.\ of $\varphi\circ \iota'$).	The two morphisms $(j,j')\colon E\rightrightarrows T$ are finite, and they generate a pro-finite equivalence relation on $T$. (We only care about the reduced image of this relation in $T\times_k T$; see the comment after~\cite[Section~9.1]{Kollar_Singularities_of_the_minimal_model_program}.)

We claim that this relation is actually finite and that $H$ descends to the quotient. We prove both claims below; for the moment assume they hold. Let $q\colon T\to T_0:=T/(E\rightrightarrows T)$ be the quotient. Since the compositions $q\circ \varphi\circ \iota$ and $q\circ\varphi\circ\iota'$ are equal, we obtain a morphism $\varphi_0\colon S_0\to T_0$ such that the diagram
		\begin{equation}\label{diagram:quotient_of_fibration}
		\begin{tikzcd}
		S\arrow[d, "n"]\arrow[r,"\varphi"] & T\arrow[d, "q"] \\
		S_0\arrow[r, "\varphi_0"] & T_0
		\end{tikzcd}
		\end{equation}
commutes. Moreover, there is an ample Cartier divisor $H_0$ on $T_0$ such that $q^*H_0 = H$. 

\begin{claim}
We have $\varphi_0^*\sO(H_0)= \sO(m(K_{S_0}+\Delta_0))$. In particular, $K_{S_0}+\Delta_0$ is semi-ample.
\end{claim}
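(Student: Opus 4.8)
The plan is to prove the displayed equality as an identity of Cartier divisor classes on $S_0$ and then read off semi-ampleness. I would first check that $\varphi_0^*H_0$ and $m(K_{S_0}+\Delta_0)$ have the same pullback under the normalization $n\colon S\to S_0$, and then upgrade this to an equality on $S_0$ itself by matching gluing data along the conductor. For the pullback computation, the commutativity of \autoref{diagram:quotient_of_fibration} together with $q^*H_0=H$ gives
\[
n^*\big(\varphi_0^*H_0\big)=\varphi^*q^*H_0=\varphi^*H=m(K_S+D+\Delta).
\]
On the other hand, the normalization isomorphism of \autoref{lemma: basic properties of normalization of demi-normal scheme}(4) yields $n^*\big(m(K_{S_0}+\Delta_0)\big)=m(K_S+D+\Delta)$. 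Hence the two line bundles $\varphi_0^*H_0$ and $m(K_{S_0}+\Delta_0)$ become isomorphic after pullback to $S$.

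This is not yet enough to conclude, since $n^*\colon\Pic(S_0)\to\Pic(S)$ has nontrivial kernel (already for a single node the gluing parameter contributes a copy of $\mathbb{G}_m$). Here I would use that $S_0$ is the geometric quotient of $S$ by the finite equivalence relation generated by $(\iota,\iota'=\iota\circ\tau)\colon D^n\rightrightarrows S$: by the gluing description of this quotient (the conductor square is a pushout), a line bundle on $S_0$ amounts to a line bundle on $S$ together with a gluing isomorphism between $\iota^*$ and $\iota'^*$ over $D^n$ subject to the cocycle condition, and two such line bundles with isomorphic pullbacks to $S$ coincide on $S_0$ as soon as their gluing data agree. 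Thus the whole Claim reduces to comparing the gluing datum carried by $\varphi_0^*H_0$ with the one carried by $m(K_{S_0}+\Delta_0)$.

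Matching these two data is the main obstacle, and it is precisely the point where Koll\'{a}r's theory of crepant log structures is implicitly at work. The datum of $m(K_{S_0}+\Delta_0)$ is the one furnished by adjunction: $\iota^*m(K_S+D+\Delta)\cong\omega_{D^n}^{[m]}(m\Diff_{D^n}\Delta)$, and since $\tau$ is a log involution of $(D^n,\Diff_{D^n}\Delta)$ by \autoref{lemma:normalization_gives_involution} this bundle is $\tau$-invariant, which supplies the identification $\iota^*\cong\tau^*\iota^*=\iota'^*$. The datum of $\varphi_0^*H_0$ comes from the descent $q^*H_0=H$, i.e. from $j^*H\cong j'^*H$ over $E$, pulled back along $\psi$ to $\iota^*\varphi^*H\cong\iota'^*\varphi^*H$. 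The key point is that $\psi$ and the pair $(j,j')$ were built as the Stein factorizations of $\varphi\circ\iota$ and of $\varphi\circ\iota'=\varphi\circ\iota\circ\tau$, and that these share the \emph{same} contraction $\psi$ precisely because $\tau$ preserves $\Diff_{D^n}\Delta$, so that $\iota^*$ and $\iota'^*$ of the semi-ample bundle $m(K_S+D+\Delta)$ agree on $D^n$. Consequently the descent datum of $H$ restricts, via $\psi$, to the $\tau$-gluing, and the two data coincide. I expect to verify this concretely at the generic points of $D$, where $n$ is \'{e}tale of degree two by \autoref{lemma: basic properties of normalization of demi-normal scheme}(2) and all the identifications are explicit.

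Once the gluing data are matched we obtain $\varphi_0^*H_0=m(K_{S_0}+\Delta_0)$ on $S_0$. Finally, since $H_0$ is ample on $T_0$ and $\varphi_0\colon S_0\to T_0$ is a morphism, $\varphi_0^*H_0$ is semi-ample; therefore $m(K_{S_0}+\Delta_0)$, and hence $K_{S_0}+\Delta_0$, is semi-ample, which proves the Claim.
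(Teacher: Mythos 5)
Your opening computation is fine: both $\varphi_0^*H_0$ and $m(K_{S_0}+\Delta_0)$ pull back to $m(K_S+D+\Delta)$ on $S$, and you correctly identify that this alone does not suffice because $n^*$ on Picard groups has a kernel supported on the gluing. The gap is in how you propose to close that kernel. You assert that a line bundle on $S_0$ \emph{amounts to} a line bundle on $S$ together with a gluing isomorphism between $\iota^*$ and $\iota'^*$ over $D^n$ satisfying a cocycle condition. The honest descent statement for a demi-normal quotient is Milnor patching along the conductor square, which involves the conductor subschemes themselves (the finite, only generically \'{e}tale map from $D\subset S$ to its image in $S_0$), not their normalizations. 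Data over $D^n$ is strictly coarser at the finitely many closed points where $D$ fails to be normal or where the conductor map fails to be \'{e}tale. Consequently, even after matching the two gluing data at the generic points of $D$ (which does determine them on the reduced scheme $D^n$), what you actually obtain is that $\varphi_0^*\sO(H_0)$ and $\sO(m(K_{S_0}+\Delta_0))$ agree outside a closed subset of codimension $\geq 2$ in $S_0$ --- and passing from there to equality is not automatic. Moreover, the concrete verification that the descent datum of $H$ restricts via $\psi$ to the $\tau$-gluing is exactly the crux and is only ``expected'' in your write-up, not carried out.

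This is precisely where the paper does its work, by a different and more robust route: it exhibits $\varphi_0^*\sO(H_0)$ directly as a subsheaf of $n_*\sO_S(m(K_S+D+\Delta))$ whose sections restrict $\tau$-invariantly to $D^n$, identifies the $\tau$-invariant part with $\sO(m(K_{S_0}+\Delta_0))$ by the cited characterization of the slc pluricanonical sheaf, and then kills the cokernel of the resulting inclusion by observing that it is supported in codimension $\geq 2$ (because the conductor map is generically faithfully flat) while both sheaves are reflexive on the $S_2$ scheme $S_0$. Some form of this reflexivity-plus-codimension argument is unavoidable in your approach as well; without it, your proof establishes the equality only away from finitely many points. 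If you add that step, and actually perform the generic comparison of the two gluing isomorphisms, your route does close, but as written it is incomplete at its decisive step.
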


\begin{proof}
Tensoring the inclusion $\sO_{S_0}\subset n_*\sO_S$ by $\varphi_0^*\sO(H_0)$ and using the projection formula, we obtain
	$$\varphi^*_0\sO_{T_0}(H_0)\subset n_*\sO_S(m(K_S+D+\Delta)).$$
By the commutativity of \Cref{diagram:quotient_of_fibration} and the definition of $\varphi$ and $q$, for $s\in\sO(H_0)$, we see that its pullback $\varphi_0^*s\in n_*\sO(m(K_S+D+\Delta))$ is a log pluricanonical section whose restriction to $D^n$ is $\tau$-invariant. Thus $\varphi^*_0\sO(H_0)\subseteq \sO(m(K_{S_0}+\Delta_0))$ by~\cite[Proposition 3.1.7]{Posva_Gluing_for_surfaces_and_threefolds}.
Conversely, by \Cref{lemma:descent_of_line_bdl_along_quotient}, a section $t$ of $q_*\sO(H)$ belongs to $\sO(H_0)$ if and only if $j^*t=(j')^*t$. Looking at the diagram \eqref{eqn:Stein_factorization_of_quotient_fibr}, we see that this condition is equivalent to $\iota^*\varphi^*t=(\iota')^*\varphi^*t$, which by~\cite[Proposition 3.1.7]{Posva_Gluing_for_surfaces_and_threefolds}
means that $\varphi^*t\in \sO(m(K_{S_0}+\Delta_0))$. We have obtained the inverse inclusion $\sO(m(K_{S_0}+\Delta_0))\subseteq \varphi^*_0\sO(H_0)$, which concludes the proof.
\end{proof}

\begin{remark}
In the diagram \eqref{diagram:quotient_of_fibration}, the Stein factorization of $S_0\to T_0$ need not be demi-normal.

For example, consider the product $T:=E\times \bP^1$ of an elliptic curve with a rational smooth curve over an algebraically closed field. Denote by $p_E$ and $p_{\bP^1}$ the projections onto the factors. Let $\Delta_T$ be the sum of one section of $p_{\bP^1}$ and three distinct sections of $p_E$. Then $K_T+\Delta_T$ is ample, and $(T,\Delta_T)$ is dlt. Let $\varphi\colon S\to T$ be the blow-up of two distinct points $p$ and $q$ that are zero-dimensional strata of $\Delta$, and let $E_p,E_q$ be the corresponding $\varphi$-exceptional divisors. Then $\varphi^*(K_T+\Delta_T)=K_S+\Delta_S+E_p+E_q$. So $\varphi\colon S\to T$ is the ample model of $(S,\Delta_S+E_p+E_q)$. Let $\tau\colon E_p\cong E_q$ be an isomorphism that sends $\Delta_S|_{E_p}$ to $\Delta_S|_{E_q}$. Then the quotient $r\colon S\to S_0:=S/R(\tau)$ exists, and $(S_0,\Delta_{S_0}+F)$ is slc with normalization $(S,\Delta_S+E_p+E_q)$, where $F=r(E_p)=r(E_q)$. On the other hand, the induced involution $E\rightrightarrows T$ is given by $p\sim q$, and so the fibration given by $|m(K_{S_0}+\Delta_{S_0}+F)|$ is $S_0\to T_0:=T/(p\sim q)$. However, $T_0$ is not demi-normal since it is not $S_2$.
\end{remark}

\subsubsection{Finiteness}
It remains to show finiteness and descent, and we begin by the former. It is convenient to reduce to the case where (each component) of $(S,\Delta+D)$ is dlt. 

\begin{claim}
In order to show that the equivalence relation induced by $E\rightrightarrows T$ is finite, we may assume that $(S,D+\Delta)$ is dlt.
\end{claim}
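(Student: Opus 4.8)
The plan is to reduce from an arbitrary slc normalization $(S,D+\Delta)$ to the dlt case by running an appropriate MMP, and then to check that this reduction preserves the finiteness question for the equivalence relation $E\rightrightarrows T$. First I would take a $\bQ$-factorial dlt modification of each irreducible component $(S_i,\Delta+D_i)$; that is, I would produce a proper birational morphism $g\colon (\tilde S,\tilde D+\tilde\Delta)\to(S,D+\Delta)$ from a $\bQ$-factorial dlt pair that is crepant, meaning $K_{\tilde S}+\tilde D+\tilde\Delta=g^*(K_S+D+\Delta)$, where $\tilde D$ is the strict transform of $D$ together with the $g$-exceptional divisors appearing with coefficient one. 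Such dlt modifications for excellent surfaces are available through Tanaka's surface MMP (\cite{Tanaka_MMP_for_excellent_surfaces}), and by \autoref{lemma:reduced_boundary_dlt_surface_pair} the reduced boundary $\tilde D$ then has normal components.

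The key point is that crepancy makes the fibration and its restriction to the conductor compatible with $g$. Since $K_{\tilde S}+\tilde D+\tilde\Delta$ is the pullback of the semi-ample divisor $K_S+D+\Delta$, a divisible multiple is again base-point free and the induced fibration $\tilde\varphi\colon\tilde S\to T$ factors through the \emph{same} target $T$ (up to the canonical identification coming from $g_*\sO_{\tilde S}=\sO_S$, the contraction has connected fibers and $T$ is unchanged). I would next check that $g$ restricts to a morphism $\tilde D^n\to D^n$ on conductor normalizations which is compatible with the involutions: because $g$ is crepant, the different is preserved, so by \autoref{lemma:normalization_gives_involution} the log involution $\tilde\tau$ on $(\tilde D^n,\Diff_{\tilde D^n}\tilde\Delta)$ lies over $\tau$. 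Consequently the restricted fibration $\tilde\psi\colon\tilde D^n\to\tilde E$ and the two maps $\tilde E\rightrightarrows T$ sit over $\psi$ and $E\rightrightarrows T$, and the reduced images of the two relations in $T\times_kT$ coincide. Finiteness of one relation is therefore equivalent to finiteness of the other, which is exactly what the claim asserts.

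The main obstacle I expect is bookkeeping the conductor through the dlt modification: one must ensure that passing to a dlt model does not create or destroy components of the reduced boundary in a way that changes the involution, and that the equivalence relation really descends to the identical subscheme of $T\times_kT$. Concretely, the $g$-exceptional divisors extracted to achieve dlt are not part of the conductor $D$, so I would verify that they are contracted by $\tilde\varphi$ appropriately and do not contribute new conductor strata — this uses that the nodes of $S_0$ are separable and that dlt modifications of surfaces only extract divisors over the non-klt locus. Once this compatibility is in place, the finiteness statement for $E\rightrightarrows T$ is literally read off from that for $\tilde E\rightrightarrows T$, so for the remainder of the finiteness argument we may and do assume that each component of $(S,D+\Delta)$ is dlt.
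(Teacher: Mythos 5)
Your argument is correct and is essentially the paper's own: the paper likewise passes to a crepant dlt blow-up $\phi\colon (S_\text{dlt},D_\text{dlt}+\Delta_\text{dlt}+E)\to(S,D+\Delta)$ with the exceptional divisors added at coefficient one, notes that the induced fibration is just $\varphi\circ\phi$ onto the same $T$, and that $(D^n,\Diff_{D^n}\Delta)=(D_\text{dlt}^n,\Diff_{D_\text{dlt}^n}(\Delta_\text{dlt}+E))$ so the involution, and hence the relation $E\rightrightarrows T$, is literally unchanged. Your slightly weaker phrasing that the new relation ``lies over'' the old one is fine, since by crepancy the two differents actually coincide and the relations are identical.
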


\begin{proof}
Indeed, let $\phi\colon (S_\text{dlt},D_\text{dlt}+\Delta_\text{dlt}+E)\to (S,D+\Delta)$ be a crepant dlt blow-up, where $D_\text{dlt}=\phi^{-1}_*D$ and $E=\Exc(\phi)$; see~\cite[Theorem~4.7 and Remark~4.8]{Tanaka_MMP_for_excellent_surfaces}. Then $K_{S_\text{dlt}}+D_\text{dlt}+\Delta_\text{dlt}+E$ is semi-ample, and the corresponding fibration is just $\varphi\circ\phi\colon S_\text{dlt}\to S\to T$. Moreover, $(D^n, \Diff_{D^n}\Delta)=(D_\text{dlt}^n,\Diff_{D_\text{dlt}^n}(\Delta_\text{dlt}+E))$, so we recover the involution $\tau$ on the dlt model. Notice that $D_\text{dlt}^n$ is just the disjoint union of its irreducible components by \Cref{lemma:reduced_boundary_dlt_surface_pair}.
\end{proof}

Now let us write $T=\bigsqcup_{i\geq 1} T_i$ and $\varphi=\prod_i\varphi_i$, where $\varphi_i\colon S_i\to T_i$ is the fibration given by $m(K_{S_i}+\Delta_i+D_i)$. Let $\kappa_i:=\kappa(S_i,\Delta_i+D_i)\geq 0$ be the respective Kodaira dimensions; it holds that $\dim T_i=\kappa_i$. We denote by $\pi_0(D^n)$ the collection of irreducible components of $D^n$.
For $\Gamma\in \pi_0(D^n)$ we write $\Delta_\Gamma:=\Diff_\Gamma(\Delta+D-\Gamma)$. 

The following claim follows immediately from the construction.

\begin{claim}\label{claim:description_of_involution_downstairs}
The two morphisms $(j,j')\colon E\rightrightarrows T$ come from an involution $B^\tau$ on $E$ defined as follows: if $\tau(\Gamma)=\Gamma'$, then $B^\tau\colon \psi(\Gamma)\cong \psi(\Gamma')$ is induced by the isomorphism 
	$\tau^*\colon H^0(\Gamma', m(K_{\Gamma'}+\Delta_{\Gamma'}))\cong H^0(\Gamma, m(K_\Gamma+\Delta_\Gamma))$. 
\end{claim}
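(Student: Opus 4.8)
The plan is to unwind the definitions of $\psi$, $j$ and $j'$ one irreducible component of $D^n$ at a time, and to read off the involution $B^\tau$ directly from the log involution $\tau$. Since we have reduced to the dlt case, \autoref{lemma:reduced_boundary_dlt_surface_pair} gives $D^n=\bigsqcup_{\Gamma\in\fC}\Gamma$, a disjoint union of normal curves, so it suffices to argue componentwise. Adjunction identifies $m(K_S+D+\Delta)|_\Gamma$ with $m(K_\Gamma+\Delta_\Gamma)$, where $\Delta_\Gamma=\Diff_\Gamma(\Delta+D-\Gamma)$; hence $\psi|_\Gamma\colon\Gamma\to\psi(\Gamma)$ is precisely the morphism attached to the complete linear system $|m(K_\Gamma+\Delta_\Gamma)|$, and $E=\bigsqcup_\Gamma\psi(\Gamma)$.

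First I would exploit that $\tau$ is a log involution of $(D^n,\Diff_{D^n}\Delta)$. Writing $\Gamma'=\tau(\Gamma)$, the restriction $\tau|_\Gamma\colon\Gamma\to\Gamma'$ is an isomorphism carrying $\Delta_\Gamma$ to $\Delta_{\Gamma'}$, hence carrying the polarization $m(K_{\Gamma'}+\Delta_{\Gamma'})$ back to $m(K_\Gamma+\Delta_\Gamma)$. It therefore induces the stated isomorphism $\tau^*\colon H^0(\Gamma',m(K_{\Gamma'}+\Delta_{\Gamma'}))\cong H^0(\Gamma,m(K_\Gamma+\Delta_\Gamma))$, compatible with multiplication of sections, and thus an isomorphism of images $B^\tau\colon\psi(\Gamma)\to\psi(\Gamma')$. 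Gluing over all components produces a self-map $B^\tau$ of $E$, which is an involution because $\tau^2=\id$ forces $\tau|_{\Gamma'}\circ\tau|_\Gamma=\id_\Gamma$. By construction the two fibrations are intertwined, namely $\psi\circ\tau=B^\tau\circ\psi$, since on each $\Gamma$ both sides equal $\psi|_{\Gamma'}\circ\tau|_\Gamma=B^\tau\circ\psi|_\Gamma$ by the naturality of the linear-system map under a polarized isomorphism.

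It then remains to identify $j'$ with $j\circ B^\tau$, which is the content of the claim. By definition $(\psi,j)$ is the Stein factorization of $\varphi\circ\iota$, so $\varphi\circ\iota=j\circ\psi$, and since $\iota'=\iota\circ\tau$ we get
$$\varphi\circ\iota'=(\varphi\circ\iota)\circ\tau=j\circ\psi\circ\tau=j\circ B^\tau\circ\psi=(j\circ B^\tau)\circ\psi.$$
As $\psi$ has geometrically connected fibers and $j\circ B^\tau$ is finite, the right-hand side is a Stein factorization of $\varphi\circ\iota'$; comparing with the definition $(\psi,j')$ and using uniqueness of the Stein factorization, we conclude $j'=j\circ B^\tau$, which is exactly the asserted description. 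I do not expect a genuine obstacle: the statement is formal once adjunction and the log-involution property of $\tau$ are in place. The only point deserving a word of care is that the same $\psi$ is the connected part of the Stein factorizations of both $\varphi\circ\iota$ and $\varphi\circ\iota'$; this is immediate from the intertwining relation $\psi\circ\tau=B^\tau\circ\psi$, because $B^\tau$ is an isomorphism and therefore does not affect connectedness of fibers. This is why the claim follows immediately from the construction.
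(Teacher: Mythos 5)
Your argument is correct and is precisely the intended one: the paper offers no written proof (it simply asserts the claim ``follows immediately from the construction''), and your componentwise unwinding via adjunction, the log-involution property of $\tau$, and uniqueness of the Stein factorization is exactly what that assertion leaves implicit.
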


It is possible that two components of $D^n$ are conjugated under $\tau$ but do not belong to the same irreducible component of $S$. However, we have the following. 

\begin{claim}\label{claim:involution_and_exceptional_curve}
  A component $\Gamma\!$ of\, $D^n$ is $\varphi$-vertical if and only if $\tau(\Gamma)$ is. Moreover, $\Gamma$ is non--$\varphi$-vertical 
  if and only if $K_\Gamma+\Delta_\Gamma$ is ample.
\end{claim}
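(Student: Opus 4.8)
The plan is to translate both assertions into statements about the degree of $K_\Gamma+\Delta_\Gamma$ on the curve $\Gamma$, using adjunction together with the fact that $\tau$ is a log involution.

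First I would record the adjunction identity. Write $\iota\colon\Gamma\to S$ for the natural (finite) morphism; after the reduction to the dlt case the components of $D$ are normal by \autoref{lemma:reduced_boundary_dlt_surface_pair}, so $\iota$ is in fact a closed immersion. Adjunction gives $\iota^*(K_S+D+\Delta)=K_\Gamma+\Delta_\Gamma$, whence $L|_\Gamma=m(K_\Gamma+\Delta_\Gamma)$ with $L=m(K_S+D+\Delta)$. Since $\varphi$ is the morphism attached to the base-point-free system $|L|$, the component $\Gamma$ is $\varphi$-exceptional exactly when $\varphi\circ\iota$ is constant, i.e.\ when $L\cdot\Gamma=0$, equivalently $\deg_\Gamma(K_\Gamma+\Delta_\Gamma)=0$.

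For the \emph{moreover} clause, I would use that $K_\Gamma+\Delta_\Gamma=\iota^*(K_S+D+\Delta)$ is nef, being the pullback of a nef divisor. On the regular projective curve $\Gamma$ a nef $\bQ$-divisor is ample if and only if it has strictly positive degree; together with the previous paragraph this shows that $\Gamma$ is non-$\varphi$-exceptional if and only if $\deg_\Gamma(K_\Gamma+\Delta_\Gamma)>0$, if and only if $K_\Gamma+\Delta_\Gamma$ is ample.

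For the first clause I would invoke \autoref{lemma:normalization_gives_involution}, by which $\tau$ is a log involution of $(D^n,\Diff_{D^n}\Delta)$ and therefore carries $K_{D^n}+\Diff_{D^n}\Delta$ to itself. Restricting to components, and using that $\Diff_{D^n}\Delta$ restricts to $\Delta_\Gamma$ on each component, if $\tau(\Gamma)=\Gamma'$ then the isomorphism $\tau|_\Gamma\colon\Gamma\to\Gamma'$ satisfies $(\tau|_\Gamma)^*(K_{\Gamma'}+\Delta_{\Gamma'})=K_\Gamma+\Delta_\Gamma$. As degree is preserved under pullback by an isomorphism, $\deg_\Gamma(K_\Gamma+\Delta_\Gamma)=\deg_{\Gamma'}(K_{\Gamma'}+\Delta_{\Gamma'})$; one vanishes precisely when the other does, so by the criterion above $\Gamma$ is $\varphi$-exceptional if and only if $\tau(\Gamma)$ is.

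The argument is essentially bookkeeping around adjunction, so I do not anticipate a genuine difficulty. The one point deserving care is that the log-involution property must be strong enough to equate the relevant degrees: since $\tau$ preserves the different by \autoref{lemma:normalization_gives_involution}, the identification of $K_\Gamma+\Delta_\Gamma$ with $(\tau|_\Gamma)^*(K_{\Gamma'}+\Delta_{\Gamma'})$ holds at least numerically, which is all that is needed. The subtlety flagged just before the claim, namely that $\Gamma$ and $\tau(\Gamma)$ may sit on different irreducible components $S_i$, does not interfere, because $\deg_\Gamma(K_\Gamma+\Delta_\Gamma)$ is intrinsic to the abstract curve $\Gamma$ and is transported by the isomorphism $\tau|_\Gamma$.
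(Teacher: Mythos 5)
Your argument is correct and essentially coincides with the paper's own proof: the paper characterizes $\varphi$-exceptionality of $\Gamma$ by $\psi(\Gamma)$ being a point, equivalently $K_\Gamma+\Delta_\Gamma$ having Kodaira dimension (i.e.\ degree) zero, and then transports this across $\tau$ using that $\tau$ is a log involution sending $K_\Gamma+\Delta_\Gamma$ to $K_{\tau(\Gamma)}+\Delta_{\tau(\Gamma)}$ --- exactly your degree bookkeeping. Your extra observations (the closed-immersion point after the dlt reduction, and nef plus positive degree implying ample on a curve) are correct and implicit in the paper's shorter argument.
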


\begin{proof}
The one-dimensional component $\Gamma$ is $\varphi$-vertical if and only if $\psi(\Gamma)$ is a point. Moreover, $\psi(\Gamma)$ is a point if and only if $K_\Gamma+\Delta_\Gamma$ has Kodaira dimension zero. Since $\tau$ sends $K_\Gamma+\Delta_\Gamma$ to $K_{\tau(\Gamma)}+\Delta_{\tau(\Gamma)}$, we obtain the result.
\end{proof}

Next we gather some observations about the non--$\varphi$-vertical components of $D^n$ and their images in $T$. 

\begin{claim}\label{claim:horinzontal_components_in_big_case}
Let $(S_i,\Delta_i+D_i)$ be such that $\kappa_i=2$. Then $(T_i,(\varphi_i)_*(\Delta_i+D_i))$ is the log canonical model of $(S_i,\Delta_i+D_i)$. If\, $\Gamma$ is a non--$\varphi_i$-vertical irreducible component of $D_i$, then $\Gamma$ is the normalization of a component of $(\varphi_i)_*D_i$. 
\end{claim}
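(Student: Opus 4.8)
The plan is to exploit that the equality $\kappa_i = 2 = \dim S_i$ forces the semi-ample fibration $\varphi_i$ to be \emph{birational}, and then to identify the pair $(T_i,B)$, writing $B := (\varphi_i)_*(\Delta_i+D_i)$, as the log canonical model by a crepancy computation. Since $K_{S_i}+\Delta_i+D_i$ is semi-ample it is nef, and since $\kappa_i = 2 = \dim S_i$ it is big; hence the morphism $\varphi_i\colon S_i\to T_i$ attached to the base-point-free system $|m(K_{S_i}+\Delta_i+D_i)|$ is a birational morphism onto the normal projective surface $T_i = \Proj\bigoplus_{\ell\ge 0}H^0(S_i,\ell m(K_{S_i}+\Delta_i+D_i))$. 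By construction $T_i$ is the underlying variety of the log canonical model of $(S_i,\Delta_i+D_i)$, and $H_i := H|_{T_i}$ is ample Cartier with $\varphi_i^*H_i = m(K_{S_i}+\Delta_i+D_i)$.

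To upgrade this to a statement about pairs, I would check that $\varphi_i$ is crepant for the pushed-forward boundary. Writing $K_{S_i}+\Delta_i+D_i = \varphi_i^*(K_{T_i}+B)+\sum_j a_jE_j$ with the $E_j$ the $\varphi_i$-exceptional curves, both sides are numerically trivial on the fibers of $\varphi_i$, so $\bigl(\sum_j a_jE_j\bigr)\cdot E_k = 0$ for every $k$. As $\varphi_i$ is a birational morphism of surfaces, the intersection matrix $(E_j\cdot E_k)$ is negative definite, and the negativity lemma (available for excellent surfaces by Tanaka's MMP) forces $a_j = 0$ for all $j$. Thus $\varphi_i^*(K_{T_i}+B) = K_{S_i}+\Delta_i+D_i$, so $(T_i,B)$ is lc and $K_{T_i}+B = \tfrac1m H_i$ is ample; that is, $(T_i,(\varphi_i)_*(\Delta_i+D_i))$ is the log canonical model of $(S_i,\Delta_i+D_i)$.

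For the second assertion, let $\Gamma\in\fC$ lie over $S_i$, with image $\overline{\Gamma} := \iota(\Gamma)$ a component of $D_i$, so that $\Gamma\to\overline{\Gamma}$ is the normalization. By \autoref{claim:involution_and_exceptional_curve} (and the description of $\varphi_i$-exceptionality via $\psi(\Gamma)$ being a point), $\Gamma$ being non-$\varphi_i$-exceptional means that $\overline{\Gamma}$ is not contracted by $\varphi_i$, so $\varphi_i(\overline{\Gamma})$ is a prime component of $(\varphi_i)_*D_i$. Since $\varphi_i$ is a birational morphism of surfaces, it is an isomorphism over a dense open of $T_i$ whose complement is finite; as $\overline{\Gamma}$ is not contracted it meets this isomorphism locus in a dense open, so $\varphi_i|_{\overline{\Gamma}}\colon\overline{\Gamma}\to\varphi_i(\overline{\Gamma})$ is birational. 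Passing to normalizations identifies $\Gamma$ with the normalization of the component $\varphi_i(\overline{\Gamma})$ of $(\varphi_i)_*D_i$, as claimed.

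The only genuine content is the crepancy step in the second paragraph; everything else is formal once the birationality of $\varphi_i$ is in hand. The main point to be careful about is that the negativity lemma and the normality of the $\Proj$ target hold over an arbitrary (possibly imperfect) field of positive characteristic, but these are exactly the excellent-surface MMP inputs that the paper already relies on, so no new difficulty arises.
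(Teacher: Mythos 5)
Your proposal is correct; note that the paper offers no proof of this claim at all (it is stated with a terminating $\lozenge$ as following immediately from the construction of $\varphi_i$), so you are supplying the standard argument that the author leaves implicit. One small remark on the crepancy step: writing $K_{S_i}+\Delta_i+D_i = \varphi_i^*(K_{T_i}+B)+\sum_j a_jE_j$ tacitly assumes that $K_{T_i}+B$ is $\bQ$-Cartier, which is not automatic for an arbitrary birational contraction and should be justified first. The cleanest fix also makes the negativity lemma unnecessary: since $\varphi_i$ is birational and $m(K_{S_i}+\Delta_i+D_i)=\varphi_i^*H_i$ with $H_i$ Cartier, pushing forward gives $m(K_{T_i}+B)=(\varphi_i)_*\varphi_i^*H_i\sim H_i$, so $K_{T_i}+B$ is $\bQ$-Cartier and ample, and pulling back again yields $\varphi_i^*(K_{T_i}+B)=K_{S_i}+\Delta_i+D_i$ directly. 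Everything else in your write-up, including the identification of $\Gamma$ with the normalization of its image component, is fine.
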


\begin{claim}\label{claim:horizontal_components}
Let $(S_i,\Delta_i+D_i)$ be such that $\kappa_i=1$, and assume that the non--$\varphi$-vertical sub-curve $\Theta$ of $D_i+\Delta_i^{=1}$ is non-empty. Then one of the following holds: 
	\begin{enumerate}
		\item  $\Theta$ is irreducible, and $\varphi_i|_{\Theta}$ is an isomorphism.
		\item  $\Theta$ is irreducible, and $\varphi_i|_{\Theta}$ has degree two.
		\item  $\Theta=\Gamma_1+\Gamma_2$, and each $\varphi_i|_{\Gamma_j}$ is an isomorphism.
	\end{enumerate}
\end{claim}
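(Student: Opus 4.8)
The plan is to read off the structure of $\varphi_i|_\Gamma$ from the geometry of a general fibre, which is forced to be a log Calabi--Yau curve of arithmetic genus zero carrying a boundary of degree exactly two. Throughout I work in the dlt case to which the previous claim has reduced us.

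Since $\kappa_i=1$, the base $T_i$ is a normal (hence, being a curve, regular) projective curve. For each irreducible component $\Gamma_j$ of $\Gamma$ the restriction $\varphi_i|_{\Gamma_j}\colon \Gamma_j\to T_i$ is projective, and by \autoref{claim:involution_and_exceptional_curve} it is quasi-finite, as $\Gamma_j$ is non-$\varphi_i$-exceptional; a projective quasi-finite morphism being finite, $\varphi_i|_{\Gamma_j}$ is finite and surjective of some degree $d_j\geq 1$. By \autoref{lemma:reduced_boundary_dlt_surface_pair} each component of $\lfloor \Delta_i+D_i\rfloor=D_i+\Delta_i^{=1}$ is normal, so every $\Gamma_j$ is a regular projective curve.

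I would then analyse the generic fibre $F_\eta=\varphi_i^{-1}(\eta)$ over the generic point $\eta\in T_i$. As $S_i$ is normal, hence regular in codimension one, $F_\eta$ is a regular --- thus Gorenstein --- projective curve over $k(\eta)$; moreover $H^0(F_\eta,\sO_{F_\eta})=k(\eta)$ because $\varphi_{i*}\sO_{S_i}=\sO_{T_i}$, so $p_a(F_\eta)=h^1(\sO_{F_\eta})\geq 0$. Since $m(K_{S_i}+\Delta_i+D_i)=\varphi_i^*H_i$ is pulled back from $T_i$, its restriction to $F_\eta$ has degree zero, while adjunction for the fibre $F_\eta$ gives
	$$(K_{S_i}+\Delta_i+D_i)|_{F_\eta}\sim_{\bQ}K_{F_\eta}+B_{F_\eta},\qquad B_{F_\eta}:=\Diff_{F_\eta}(\Delta_i+D_i)\geq 0.$$
Hence $\deg_{k(\eta)}(K_{F_\eta}+B_{F_\eta})=0$. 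The non-emptiness of $\Gamma$ forces $B_{F_\eta}\neq 0$, so combining $\deg_{k(\eta)}K_{F_\eta}=2p_a(F_\eta)-2$ with $B_{F_\eta}$ effective we conclude $p_a(F_\eta)=0$ and $\deg_{k(\eta)}B_{F_\eta}=2$. Each $\Gamma_j$ meets $F_\eta$ in a single point $P_j$ with $[k(P_j):k(\eta)]=d_j$, appearing in $B_{F_\eta}$ with coefficient at least $1$; therefore $\sum_j d_j\leq \deg_{k(\eta)}B_{F_\eta}=2$. The classification is then immediate from $d_j\geq 1$: either $\Gamma$ is irreducible with $d_1\in\{1,2\}$, or $\Gamma$ has exactly two components, each of degree one. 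A degree-one finite morphism $\Gamma_j\to T_i$ is birational onto the normal curve $T_i$, hence an isomorphism, yielding cases (1) and (3), while $d_1=2$ is case (2).

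The step I expect to require the most care is the fibre analysis over the imperfect residue field $k(\eta)$, where $F_\eta$ need not be smooth: one must be sure the adjunction identity, the non-negativity $p_a(F_\eta)\geq 0$, and the identification of $\Gamma_j\cap F_\eta$ with a single point of degree $d_j$ all survive. Working with the \emph{generic} fibre rather than a closed one, using only that $F_\eta$ is regular (so Gorenstein, giving $\deg\omega_{F_\eta}=2p_a(F_\eta)-2$) and phrasing every count as a function-field degree, is precisely what renders these robust against inseparability, and this is the reasoning I have used above.
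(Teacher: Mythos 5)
Your proof is correct and reaches the stated trichotomy by the same underlying mechanism as the paper: force $\deg(K_F+B_F)=0$ on a fibre, deduce $p_a(F)=0$ and $\deg B_F=2$, and bound the horizontal components of $D_i+\Delta_i^{=1}$ by their contribution to $B_F$. The genuine difference is the choice of fibre. The paper uses a general \emph{closed} fibre $F=\varphi_i^{-1}(t)$, and must therefore establish that $F$ is geometrically integral and Gorenstein, and --- the delicate point --- that $K:=H^0(F,\sO_F)$ coincides with $k(t)$: the adjunction count only gives $\Gamma\cdot_K F\le 2$, whereas the degree of $\varphi_i|_{\Gamma_j}$ is read off from $\Gamma_j\cdot_{k(t)}F$, and the identification $K=k(t)$ is extracted from the vanishing of $R^1\varphi_{i*}\sO_{S_i}$ over the locus of integral fibres, using $-K_{S_i}-\Delta_i\sim_{\bQ,T_i}D_i$ and \cite[3.3]{Tanaka_MMP_for_excellent_surfaces}. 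By working at the generic point you obtain $H^0(F_\eta,\sO_{F_\eta})=k(\eta)$ for free from $\varphi_{i*}\sO_{S_i}=\sO_{T_i}$ and flat base change, and the identification of $\Gamma_j\cap F_\eta$ with $\Spec k(\Gamma_j)$ (a single reduced point of residue degree $d_j$) makes the count $\sum_j d_j\le 2$ transparent; so your route dispenses with both the geometric-integrality input and the $R^1$ vanishing. What it buys in exchange is the need to check that $\omega_{F_\eta}=\omega_{S_i/T_i}|_{F_\eta}$ and that $\deg_{k(\eta)}\omega_{F_\eta}=2p_a(F_\eta)-2$ hold for the regular but possibly non-smooth curve $F_\eta$ over the imperfect field $k(\eta)$, which you flag and handle correctly (the paper's counterpart is the appeal to \cite[10.6]{Kollar_Singularities_of_the_minimal_model_program}). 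Both arguments then conclude identically, using that a finite morphism of generic degree one from the integral curve $\Gamma_j$ onto the normal curve $T_i$ is an isomorphism.
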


\begin{proof}
Let $\eta$ be the generic point of $T_i$. Then the generic fiber $F=S_i\times_T k(\eta)$ is a regular proper curve. Since $\varphi_*\sO_{S_i}=\sO_{T_i}$, we obtain $H^0(F,\sO_F)=k(\eta)$. We have
	$$\deg_{k(\eta)} K_F=\deg_{k(\eta)} (-\Delta_i-D_i)|_F\leq \deg_{k(\eta)}(-\Theta)|_F<0.$$
Thus $h^0(F,\omega_F)=h^1(F,\sO_F)=0$. By~\cite[Section~10.6]{Kollar_Singularities_of_the_minimal_model_program}, we deduce that $\deg_{k(\eta)} K_F=-2$. Hence $\deg_{k(\eta)}\Theta|_F\leq 2$, which means that $\Theta$ has at most two irreducible components. If $\Gamma_j$ is an irreducible component such that $\deg_{k(\eta)} \Gamma_j|_F=1$, then $\varphi|_{\Gamma_j}\colon \Gamma_j\to T$ is a finite birational morphism of normal curves, hence an isomorphism.
\end{proof}

\begin{claim}\label{claim:log_involution_over_sources}
In the situation of \Cref{claim:horizontal_components}, if\, $\Theta\to T_i$ is separable of degree two, then there exists a non-trivial log involution of\, $(\Gamma^n,\Diff_{\Gamma^n}(\Delta+D-\Gamma))$ over $T_i$.
\end{claim}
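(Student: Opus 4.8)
The plan is to produce the log involution as the Galois deck transformation of $\varphi_i|_{\Gamma^n}$ and then to verify that it preserves the different. Set $\psi := \varphi_i|_{\Gamma^n}\colon \Gamma^n\to T_i$; by hypothesis this is a finite separable morphism of degree $2$ between regular projective curves, so the function field extension $k(T_i)\subseteq k(\Gamma^n)$ is separable of degree $2$, hence Galois. Let $\sigma$ denote the generator of its Galois group. Since $\Gamma^n$ is normal and finite over $T_i$ with function field $k(\Gamma^n)$, it coincides with the normalization of $T_i$ in $k(\Gamma^n)$; therefore $\sigma$ extends uniquely to a non-trivial involution $\sigma\in \Aut_{T_i}(\Gamma^n)$. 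It remains to show that $\sigma$ is a log involution of $(\Gamma^n,B)$, where I write $B:=\Diff_{\Gamma^n}(\Delta+D-\Gamma)$; that is, that $\sigma^*B=B$.

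First I would locate the support of $B$. Restricting to the generic fibre $F_\eta$ of $\varphi_i$ over the generic point $\eta\in T_i$, the computation in \autoref{claim:horizontal_components} gives $F_\eta\cong\bP^1_{k(\eta)}$ with $\deg K_{F_\eta}=-2$ and $(\Delta+D)|_{F_\eta}$ of degree $2$. Since $\Gamma$ is a component of $D_i+\Delta_i^{=1}$, it meets $F_\eta$ in a degree-$2$ boundary point of coefficient $1$, which already accounts for the whole boundary degree; hence no other component of $\Delta+D$ meets the general fibre and $S_i$ is regular along $\Gamma$ over a dense open $U\subseteq T_i$. Consequently $\psi$ is étale over $U$ and $B$ is supported on $\psi^{-1}(T_i\setminus U)$, a finite set of closed points. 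Because $\sigma$ fixes every ramification point of $\psi$, the identity $\sigma^*B=B$ is automatic at ramified points, and over each unramified special point $t_0$ with fibre $\{P,P'=\sigma(P)\}$ it reduces to the equality of the coefficients of $B$ at $P$ and at $P'$.

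The heart of the matter is thus the symmetry of the different at the unramified special points, and this is where Koll\'ar's theory of sources and $\bP^1$-links (\cite[\S 4.3]{Kollar_Singularities_of_the_minimal_model_program}) is implicitly at work. I would argue that the two points $P,P'$ of $\Gamma\cap F_{t_0}$ are interchanged by the canonical involution of the generic-fibre $\bP^1$-link: over $\eta$ the pair $(\bP^1_{k(\eta)},\Gamma|_{F_\eta})$ is a genus-zero curve carrying a single degree-$2$ logarithmic boundary point, which admits a unique non-trivial log involution exchanging its two geometric branches, and this involution acts on the degree-$2$ point $\Gamma\cap F_\eta$ exactly as $\sigma$. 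Spreading this fibrewise involution out over $T_i$ and identifying it with $\sigma$, I would conclude that the local analytic types of $(S_i,\Delta+D)$ along $\Gamma$ at $P$ and at $P'$ agree; since the different is computed from this local type, its coefficients at $P$ and $P'$ coincide, yielding $\sigma^*B=B$. The main obstacle is precisely controlling this local comparison at the special fibres — equivalently, checking crepancy of the fibrewise involution at the finitely many points where $S_i$ may be singular along $\Gamma$ — which I expect to handle by the explicit classification of the few fibre configurations that arise, rather than by invoking the general source and spring machinery.
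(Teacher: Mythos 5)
Your construction of the involution agrees with the paper's: $\Gamma^n\to T_i$ is a separable degree~$2$ cover of regular curves, hence Galois, and the Galois generator $\sigma$ is the candidate involution. The divergence, and the gap, is in the verification that $\sigma$ preserves $B=\Diff_{\Gamma^n}(\Delta+D-\Gamma)$. You reduce correctly to comparing the coefficients of $B$ at the two points $P$, $P'=\sigma(P)$ of an unramified special fibre, but the mechanism you propose for this comparison is not established and, as stated, does not work. The generic fibre $\bP^1_{k(\eta)}$ with a single degree-$2$ boundary point does \emph{not} admit a \emph{unique} non-trivial log involution swapping the two geometric branches (automorphisms of $\bP^1$ preserving a two-point set form a positive-dimensional family), and even granting a choice of fibrewise involution over the generic point, spreading it out to a biregular involution of $S_i$ over $T_i$ and checking that it is crepant at the finitely many special points is precisely the content of Koll\'ar's Galois property of springs and $\bP^1$-links --- the machinery the paper is deliberately avoiding. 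You acknowledge this yourself as ``the main obstacle,'' so the crucial step is left unproved.

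The paper's verification sidesteps all local analysis with a short global argument: every section $\omega$ of $\sO(m(K_S+D+\Delta))=\varphi_i^*\sO(H)$ restricts on $\Gamma$ to $(\varphi_i|_\Gamma)^*s$ for some $s\in H^0(T_i,\sO(H))$, and since $\sigma$ commutes with $\varphi_i|_\Gamma$ one gets $\sigma^*(\omega|_\Gamma)=\omega|_\Gamma$; as these restrictions generate $\omega_\Gamma^m(m\Delta_\Gamma)$ (the system is base-point free), $\sigma$ preserves $m(K_\Gamma+\Delta_\Gamma)$ and hence the different. If you want to salvage your local route you would in effect have to reprove \cite[4.45.5]{Kollar_Singularities_of_the_minimal_model_program} in this setting, which is far more work than the statement requires. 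A secondary omission: the claim is also invoked in the configuration $\Gamma=\Gamma_1+\Gamma_2$ with each $\Gamma_j\to T_i$ an isomorphism (case (3) of \autoref{claim:horizontal_components}), where the ``involution'' is $\varphi_i|_{\Gamma_2}^{-1}\circ\varphi_i|_{\Gamma_1}$; your argument, which presumes $\Gamma^n$ irreducible with a degree-$2$ function field extension, does not cover this case.
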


\begin{proof}
First assume that $\Theta=\Theta^n$ is irreducible. Then $\varphi_i|_\Theta$ is Galois of degree two and induces an involution $\xi\colon \Theta\cong \Theta$ over $T_i$. 
We claim that $\xi$ preserves the line bundle $\sO(m(K_\Theta+\Delta_\Theta))$. Indeed, fix any global meromorphic form $\omega\in H^0(S_i,\sO(m(K_{S_i}+D_i+\Delta_i)))$, and take $s\in H^0(T_i,\sO(H))$ such that $\varphi_i^*s=\omega$. Then since $\xi$ commutes with $\varphi_i|_{\Theta}$, we have 
	$$\omega|_{\Theta}=\left(\varphi_i|_{\Theta}\right)^*s=\left(\varphi_i|_{\Theta}\circ \xi\right)^*s=\xi^*\omega|_{\Theta}.$$
Since the global sections of $\sO(m(K_S+D+\Delta))$ generate, our claim is proved.

Now assume that $\Theta=\Gamma_1+\Gamma_2$ and that both $\varphi|_{\Gamma_i}$ are isomorphisms. Then one proves as above that
				$$\xi:=\varphi_i|_{\Gamma_2}^{-1}\circ \varphi_i|_{\Gamma_1}\colon (\Gamma_1,\Delta_{\Gamma_1})\longrightarrow (\Gamma_2,\Delta_{\Gamma_2})$$
		is a log isomorphism. 
\end{proof}

We are ready to show the finiteness of the relation generated by $(j,j')\colon E\rightrightarrows T$. Recall that $E$ is endowed with an involution $B^\tau$ defined in \Cref{claim:description_of_involution_downstairs} and that $j'=j\circ B^\tau$. If $\Gamma_{B^\tau}\hookrightarrow E\times_k E$ is the graph of the involution, the finite morphism $\Gamma_{B^\tau}\to T\times_k T$ induces an equivalence relation $R(B^\tau)\rightrightarrows T$ (see \Cref{section:Quotients_by_equivalence_relation}).

\begin{claim}\label{claim:finiteness}
The equivalence relation $R(B^\tau)$ defined by $(j,j')\colon E\rightrightarrows T$ is finite.
\end{claim}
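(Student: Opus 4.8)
The plan is to show that the transitive closure of the correspondence $(j,j')\colon E\rightrightarrows T$ does not create infinite equivalence classes. The first observation is that, since $\tau$ permutes the components of $D^n$, both images $j(E)$ and $j'(E)$ are contained in $\varphi(D^n)$, which is a finite union of curves — the images $C_\Gamma:=\varphi(\Gamma)$ of the non-$\varphi$-exceptional components $\Gamma$ — together with the finitely many points that are images of the $\varphi$-exceptional components (exceptionality being preserved by $\tau$, by \autoref{claim:involution_and_exceptional_curve}). Hence the whole relation is supported on this finite union. The finitely many points coming from exceptional components can only be glued among themselves and onto the curves, so once the curve part is controlled they contribute a relation on a finite set, which is automatically finite; thus it suffices to bound the orbit of a point $t$ lying on the curves $C_\Gamma$.

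To analyze such an orbit I would lift $t$ to the normalization $\coprod_\Gamma\Gamma$ of $\bigcup_\Gamma C_\Gamma$, where $\Gamma$ runs over the non-$\varphi$-exceptional components. Tracking the equivalence class of $t$ then amounts to iterating two operations on $\coprod_\Gamma\Gamma$: passing between the finitely many points of a common $\varphi$-fibre, and applying the gluing involution $\tau$. The key point, extracted from \autoref{claim:horinzontal_components_in_big_case}, \autoref{claim:horizontal_components} and \autoref{claim:log_involution_over_sources}, is that \emph{both} operations are log isomorphisms of the polarized pairs $(\Gamma,\Delta_\Gamma)$. Indeed $\tau$ is a log involution by construction, while a $\varphi$-fibre meets a given $\Gamma$ either in a single point, when $\varphi_i|_\Gamma$ is an isomorphism, or in two points exchanged by a Galois involution $\xi$ that \autoref{claim:log_involution_over_sources} recognizes as a log involution; the case of a pair $\Gamma_1+\Gamma_2$ of sections mapping isomorphically is handled by the same claim, and in the big case the relevant identifications live on the normalizations of the components of $(\varphi_i)_*D_i$ supplied by \autoref{claim:horinzontal_components_in_big_case}.

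The heart of the argument is the finiteness of the resulting monodromy. By \autoref{claim:involution_and_exceptional_curve} every non-$\varphi$-exceptional $\Gamma$ has $K_\Gamma+\Delta_\Gamma$ ample, so \autoref{proposition:log_auto_of_curves} shows that $\Aut_k(\Gamma,\Delta_\Gamma)$ is a \emph{finite} group. Fixing one curve $\Gamma_0$, every ``return loop'' built from the two operations above composes to an element of $\Aut_k(\Gamma_0,\Delta_{\Gamma_0})$; the subgroup so generated is contained in this finite group, hence finite, no matter how many generators arise. Consequently each of the finitely many curves $C_\Gamma$ contributes only finitely many points to the class of $t$, and as there are finitely many curves the orbit of $t$ is finite. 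This is precisely the mechanism by which finiteness of $\mathbf{B}$-representations prevents the transitive closure from exploding; in the present surface situation the representation-theoretic input collapses to the elementary statement \autoref{proposition:log_auto_of_curves}.

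I expect the main obstacle to be the bookkeeping in the second paragraph: one must verify that the passage between points of a common $\varphi$-fibre is genuinely induced by a log automorphism of the relevant source curve in every configuration permitted by \autoref{claim:horizontal_components}, and that components of $(\varphi_i)_*D_i$ meeting at a point of $T_i$ in the big case are correctly matched under $\tau$. Once all these local identifications are seen to be log isomorphisms preserving the ample polarizations $K_\Gamma+\Delta_\Gamma$, the finiteness of $\Aut_k(\Gamma,\Delta_\Gamma)$ closes the argument. Note that the genuinely representation-theoretic \autoref{prop:pluricanonical_rep_for_genus_zero} is \emph{not} needed here, since non-$\varphi$-exceptional curves have ample log canonical class; it will enter only in the subsequent descent of $H$, where the curves involved satisfy $K+\Delta\sim_\bQ 0$ and may carry infinite automorphism groups.
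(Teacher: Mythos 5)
Your overall architecture coincides with the paper's: pull the relation back to the (normalized) source curves, observe that $\tau$ and the fibrewise involutions $\xi$ of \autoref{claim:log_involution_over_sources} generate a group of log isomorphisms whose stabilizers land in the finite groups $\Aut_k(\Gamma,\Delta_\Gamma)$ of \autoref{proposition:log_auto_of_curves}, and treat the contracted components as a finite set. Your closing remark that \autoref{prop:pluricanonical_rep_for_genus_zero} is not needed here and only enters in the descent step is also exactly right.

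There is, however, a genuine gap in the load-bearing assertion that \emph{both} operations --- applying $\tau$ and passing between points of a common $\varphi$-fibre --- are log isomorphisms of the polarized pairs $(\Gamma,\Delta_\Gamma)$. This holds in the configurations of \autoref{claim:horizontal_components} (a section; two sections exchanged by $\xi=\varphi_i|_{\Gamma_2}^{-1}\circ\varphi_i|_{\Gamma_1}$; a separable double cover with its Galois involution), but it fails at the finitely many points described in \autoref{observation:structure_of_special_set}: when $\dim T_i=2$ and $z$ is a node of $(\varphi_i)_*D_i$, the relation identifies the two preimages $p$ and $q$, which may lie on distinct source curves $\Gamma_p$ and $\Gamma_q$. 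That identification is a bare point identification arising from a contracted curve through $p$ and $q$; it does not extend to any map $(\Gamma_p,\Delta_{\Gamma_p})\to(\Gamma_q,\Delta_{\Gamma_q})$, and there is no reason for these two pairs to be log isomorphic (your proposed fix, matching them ``under $\tau$'', is neither available nor needed --- \autoref{claim:uniqueness_of_springs} only compares the minimal lc centers over $z$, not the curves containing them). A return loop passing through such a node therefore need not lie in $\Aut_k(\Gamma_0,\Delta_{\Gamma_0})$, and your monodromy argument does not cover it. The repair is the one the paper uses, and which you already invoke for the contracted components: the points $z$ whose $j$-fibre is not contained in an orbit of the finite group $G=\langle B^\tau,\{B^\xi\}\rangle$ form a finite set $Z$, and adjoining relations supported on the finite scheme $G\cdot j^{-1}(Z)\times_k G\cdot j^{-1}(Z)$ to a finite equivalence relation keeps it finite. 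With that adjustment your argument closes.
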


\begin{proof}
We study the pullback of the equivalence relation $R(B^\tau)\rightrightarrows T$ through the finite structural morphism $j\colon E\to T$. If we can show that this equivalence relation on $E$ is finite, it will follow that the equivalence relation $R(B^\tau)\rightrightarrows T$ is finite. 
This pullback is the equivalence relation generated by two types of pre-relations on $E$:
	\begin{enumerate}
		\item the isomorphism $B^\tau\colon E\cong E$, 
		\item the fibers of $j\colon E\to T$. 
	\end{enumerate}
Outside a zero-dimensional closed subset $Z\subset j(E)$, the fibers of $j$ are either singletons or of order two. Indeed, over a dense open subset of $j(E)$, the morphism $j$ is either the normalization or the degree two morphism $\Theta\to T_i$, as in \Cref{claim:log_involution_over_sources}. The fibers of the degree two morphisms that are separable are the orbits of the log involutions $\xi\colon \Theta^n\cong \Theta^n$ described in that same claim. Under the identification $\psi(\Theta^n)=\Spec(H^0(\Theta^n, m(K_{\Theta^n}+\Delta_{\Theta^n})))$, we can describe this involution as follows: $\xi$ induces an automorphism $\xi^*$ of $H^0(\Theta^n,m(K_{\Theta^n}+\Diff_{\Theta^n}(\Delta+D-\Theta)))$, inducing in turn an automorphism $B^\xi$ of $\psi(\Theta^n)$. We can extend $B^\xi$ to an automorphism of the whole $E$ by declaring it to be the identity on the other components.
	
	Let us  first study the relation generated by the group of automorphisms $G=\langle B^\tau, \{B^\xi\}\rangle$ of $E$. We claim that $G\leq \Aut_k(E)$ is finite. Actually, the map 
		$$\Aut_k(D^n,\Diff_{D^n}\Delta)\supset G'=\langle \tau,\{\xi\}\rangle\longrightarrow G,\quad \phi\longmapsto B^\phi$$
	is a group morphism, and therefore it is surjective. So it suffices to show that $G'$ is finite. We have a natural group morphism $G'\to\Bij(\pi_0(D^n))$, and so we need only to show that its kernel $K$ is finite. Since by definition elements of $K$ send each component $\Gamma\in\pi_0(D^n)$ onto itself, we have a group morphism
		$$ \iota\colon K\longrightarrow\prod_{\Gamma\in\pi_0(D^n)}
		\Aut_k(\Gamma,\Delta_\Gamma)$$
which is injective. To conclude that $K$ is finite, we make the following observations:
	\begin{itemize}
		\item Let $\pr_\Gamma$ be the projection map from the product to its factor indexed by $\Gamma$. It is sufficient to show that each $(\pr_\Gamma\circ\iota)(K)$ is finite.
		\item If $\Gamma$ is non--$\varphi$-vertical, then $\Aut_k(\Gamma,\Delta_\Gamma)$ is finite by \Cref{proposition:log_auto_of_curves} since $K_\Gamma+\Delta_\Gamma$ is ample. Thus $(\pr_\Gamma\circ\iota)(K)$ is automatically finite.
		\item If $\Gamma$ is $\varphi$-vertical, then we claim that $(\pr_\Gamma\circ\iota)(K)$ has order at most two. Indeed, in \Cref{claim:horizontal_components}, we defined the isomorphisms $\xi$ between some non--$\varphi$-vertical curves, and we extend them to $D^n$ by the identity on the other components. Thus every $(\pr_\Gamma\circ\iota)(\xi)$ is the identity. As $K\subset G'=\langle \tau,\{\xi\}\rangle$, the image $(\pr_\Gamma\circ\iota)(K)$ is either trivial, or $\{\id, (\pr_\Gamma\circ\iota)(\tau)\}$ in case $\tau$ fixes $\Gamma$.
	\end{itemize}
	
	Now we must also declare to be equivalent those points that belong to the fiber above the points of $Z$; this means merging some $G$-orbits together. For the moment, it is sufficient to know that the set $Z$ is finite (we will describe it more precisely in \Cref{section:special_set} below). Since the new relations we must add on $E$ are supported on $G\cdot j^{-1}(Z)\times_k G\cdot j^{-1}(Z)$, which is finite over $k$, we obtain that the pullback of $R(B^\tau)$ on $E$ is finite.
\end{proof}

\subsubsection{Analysis of the special set}\label{section:special_set}
Before proceeding to the descent of the line bundle $H$, we study the special set $Z\subset$ considered during the proof of \Cref{claim:finiteness}. Recall that it is the finite set of those $z\in j(E)$ such that $j^{-1}(z)$ is not contained in a single $G$-orbit. 

\begin{claim}\label{claim:special_set_is_lc_strata}
Every point of\, $Z$ is the image of an lc center of\, $(S,\Delta+D)$.
\end{claim}

\begin{proof}
Let $z\in T_i$ be a point of $Z$. If $\dim T_i=0$, then the claim is clear. If $\dim T_i=1$, then by \Cref{claim:horizontal_components}, it follows that $z$ is the contraction of a $\varphi$-vertical component of $\Delta_i^{=1}+D_i$. 

Now assume that $\dim T_i=2$. If $z=\varphi(\Gamma)$ for some $\Gamma\in \pi_0(D^n)$, then we are done. So assume that there is no $\Gamma\in \pi_0(D^n)$ such that $\varphi(\Gamma)=z$. This implies that $z\in \varphi_*D_i$. By \Cref{claim:horinzontal_components_in_big_case}, around $z$ the morphism $j$ is just the normalization of $\varphi_*D_i$ . As $j$ is not an isomorphism over $z$ by assumption, it follows that $z$ is a singular point of $\varphi_*D_i$. We distinguish two cases:
	\begin{enumerate}
		\item $\varphi_i\colon S_i\to T_i$ \textit{is an isomorphism above} $z$:  Since $(S_i,\Delta_i+D_i)$ is dlt, we see that $z$ is a stratum of $\Delta_i^{=1}+D_i$.
		\item $\varphi_i\colon S_i\to T_i$ \textit{is not an isomorphism above} $z$: In particular, there is a proper curve $C\subset S_i$ that contracts to $z$. As 
			$$\varphi_i^*\left(K_{T_i}+\varphi_*(\Delta_i+D_i)\right)=K_{S_i}+\Delta_i+D_i,$$
		by~\cite[Theorem~2.31(2)]{Kollar_Singularities_of_the_minimal_model_program}, we see that $C$ belongs to the reduced boundary $\Delta_i^{=1}+D_i$ and hence is an lc center.
	\end{enumerate}
The proof of the claim is complete.
\end{proof}


\begin{claim}\label{claim:uniqueness_of_springs}
For each $z\in Z$, let $W_z$ and $W'_z$ be two lc centers of\, $(S,\Delta+D)$ that are minimal for the property that $\varphi(W_z)=z=\varphi(W'_z)$. Then there is a log isomorphism $(W_z,\Diff^*_{W_z}(\Delta+D))\cong (W'_z,\Diff^*_{W'_z}(\Delta+D))$.
\end{claim}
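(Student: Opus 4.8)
The plan is to treat the three types of special point $z$ listed in \autoref{observation:structure_of_special_set} in turn, in each case identifying the minimal lc centers lying over $z$ and producing the log isomorphism by a local analysis of the fibre $\varphi^{-1}(z)$. The guiding principle is uniform: since $K_S+\Delta+D=\tfrac1m\varphi^*H$ is $\varphi$-trivial, the restriction of the log structure to a connected component of $\varphi^{-1}(z)$ has numerically trivial log canonical class. A minimal lc center $W$ over $z$ is normal, hence either a closed point or a curve contracted to $z$; I want to connect any two of them, $W$ and $W'$, through the fibre by a finite chain of lc centers in which each successive pair is joined by a curve that is a $\bP^1$-link, so that composing the induced log isomorphisms yields $(W,\Diff^*_W(\Delta+D))\cong(W',\Diff^*_{W'}(\Delta+D))$.

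In the birational case (type (1)) the minimal lc centers over the node $z$ are the two preimages $p,q$. Because we have reduced to separable nodes and $z$ has two distinct preimages, the node is split over $k(z)$, so $k(p)\cong k(z)\cong k(q)$; on a zero-dimensional lc center the different vanishes, so the required log isomorphism is simply an isomorphism of the point schemes, and the two branches of the split node furnish it directly.

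For the contracted cases (types (2) and (3)) I would exploit the fibration. When $\dim T_i=1$, \autoref{claim:horizontal_components} shows that the generic fibre is a $\bP^1$ meeting the horizontal boundary $\Gamma$ in at most two points, hence is itself a $\bP^1$-link, and the two springs over $z$ are the specialisations over $z$ of its two marked points along $\Gamma$. If $\Gamma\to T_i$ is an isomorphism there is a single spring and nothing to prove; if it has degree two, the log involution $\xi$ of \autoref{claim:log_involution_over_sources}, being defined over $T_i$, specialises over $z$ to a log isomorphism exchanging the two springs. If the special fibre degenerates to a chain of contracted curves, I would instead run along the chain: each contracted component $C$ of $D_i+\Delta_i^{=1}$ is a genus-zero curve with $(K_C+\Diff_C(\Delta+D-C))\sim_{\bQ}0$ meeting the remaining lc locus in exactly two points, hence a $\bP^1$-link, and successive links compose to relate the springs at the two ends. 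The case $\dim T_i=0$ is the same argument applied to the whole log-trivial surface $(S_i,\Delta_i+D_i)$.

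The main obstacle is the verification, in the contracted cases, that consecutive minimal lc centers are genuinely joined by a $\bP^1$-link and that the induced birational map of endpoints is an isomorphism rather than merely a crepant birational map: this requires that each intermediate contracted curve be a regular genus-zero curve with log-trivial adjoint class and precisely two special points, which I would establish from the adjunction computation of \autoref{claim:horizontal_components} together with normality of the minimal lc centers (\autoref{lemma:reduced_boundary_dlt_surface_pair}) and the curve argument of \autoref{prop:pluricanonical_rep_for_genus_zero}. A secondary but pervasive difficulty is that $k$ may be imperfect: I must track residue fields and separability along the chain, and in particular control the non-smooth degree-two fibres of \autoref{claim:horizontal_components}(2), to ensure that the resulting log isomorphism is defined over $k$ and compatible with the two differents $\Diff^*_W(\Delta+D)$ and $\Diff^*_{W'}(\Delta+D)$.
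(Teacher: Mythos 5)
Your overall strategy---connect minimal lc centers through the contracted part of $\varphi^{-1}(z)$ by chains of log-trivial boundary curves and compose the resulting $\bP^1$-link isomorphisms---is the same as the paper's, which handles the birational case by adjunction along the contracted chain and the fibered cases by deferring to an MMP argument from \cite[5.2.9, 5.2.12]{Posva_Gluing_for_surfaces_and_threefolds}. But two points in your write-up are genuinely off. First, in type (1) your justification is a non sequitur: the reduction to separable nodes concerns the conductor of $S_0$, i.e.\ the gluing involution on $D^n$, and says nothing about the singularities of the image curve $(\varphi_i)_*D_i$ inside the log canonical model $T_i$, whose node at $z$ is what is at stake here. (The conclusion $k(p)=k(z)=k(q)$ does follow from ``node with two distinct preimages,'' but not for the reason you give.) More importantly, your type (1) argument only treats the two endpoints $p,q$, whereas the minimal lc centers over $z$ also include every intermediate $0$-dimensional stratum on the contracted chain of boundary curves joining $p$ to $q$; the paper's route---$(K_{S_i}+\Delta_i+D_i)\cdot C=0$ plus adjunction on each component $C$ of that chain, giving $k(p)=H^0(C,\sO_C)=k(q)$---is needed precisely to sweep up those intermediate centers as well.

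Second, for types (2) and (3) you have correctly isolated the hard step but not supplied it: you need that every contracted component of $D_i+\Delta_i^{=1}$ in the special fiber is a genus-zero curve meeting the remaining lc locus in exactly two points, and the tools you propose do not obviously deliver this---\autoref{claim:horizontal_components} is an adjunction computation on the \emph{generic} fiber, and \autoref{prop:pluricanonical_rep_for_genus_zero} is a finiteness statement for pluricanonical representations, not a structure result for degenerate fibers. The special fiber can contain a contracted genus-one component of $D_i$ carrying no zero-dimensional stratum, in which case your ``chain of $\bP^1$-links'' simply does not exist; the paper has to treat this degeneration separately (\autoref{claim:genus_one_curves_in_fibers}, which shows such a $W$ is the \emph{unique} minimal center over $z$, via a crepant MMP over $T$). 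For the remaining configurations the paper closes the gap by running a $(K_S+\Upsilon)$-MMP over $T$ and tracking crepancy, following \cite[5.2.12]{Posva_Gluing_for_surfaces_and_threefolds}; some argument of this kind, beyond the generic-fiber adjunction, is required to make your plan into a proof.
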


\begin{proof}
If $z\in T_i$ with $\dim T_i\leq 1$, we apply \Cref{proposition:useful_results_from_gluing_article} with $\Theta=D_i+\Delta_i^{=1}$ and $\Upsilon=\Delta_i^{<1}$. Hence from now on we assume that $\dim T_i=2$. The fiber $\varphi^{-1}(z)\cap (\Delta_i^{=1}+ D_i)$ is connected by~\cite[Proposition~2.36]{Kollar_Singularities_of_the_minimal_model_program}, and contains at least a strata of $\Delta_i^{=1}+D_i$ by \Cref{claim:special_set_is_lc_strata}. If it is zero-dimensional, then $W_z=W_z'$. Assume that $\varphi^{-1}(z)\cap (\Delta_i^{=1}+ D_i)$ is one-dimensional; let $\Theta=\bigcup_{j=1}^n\Theta_j$ be the sub-curve of $\Delta_i^{=1}+D_i$ that contracts to $z$. Then the minimal lc centers above $z$ are the minimal strata of $\Theta$. These strata are isomorphic by \Cref{lemma:zero_dim_strata_on_fibers}.
\end{proof}

\begin{claim}\label{claim:curves_in_fibers_in_big_case}
In the case $z\in T_i$ with $\dim T_i=2$, if a minimal lc center $W_z$ above $z$ is a curve, then it is the unique minimal lc center over $z$.
\end{claim}

\begin{proof}
Let us keep the notation of the proof of \Cref{claim:uniqueness_of_springs}. By the connectedness of $\varphi^{-1}(z)\cap (\Delta_i^{=1}+ D_i)$, we see that a minimal lc center is a curve if and only if $\varphi^{-1}(z)\cap (\Delta_i^{=1}+ D_i)$ is an irreducible curve.
\end{proof}

\begin{claim}\label{claim:genus_one_curves_in_fibers}
In the case $z\in T_i$ with $\dim T_i=1$, if one minimal $W_z$ above $z$ is a curve of genus one, then it is the unique minimal lc center over $z$.
\end{claim}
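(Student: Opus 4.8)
The plan is to first read off the geometry of $W$ from adjunction, then use \autoref{claim:uniqueness_of_springs} to reduce the statement to the exclusion of a second genus one curve inside the same fibre, and finally to contradict the genus zero of the generic fibre of $\varphi_i$. Throughout I write $\Delta_W:=\Diff_W(\Delta+D-W)$ and $F_z:=\varphi_i^{-1}(z)$.

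First I would restrict $K_{S_i}+\Delta_i+D_i$ to $W$. Since $W$ is $\varphi_i$-exceptional one has $(K_{S_i}+\Delta_i+D_i)\cdot W=0$, so by adjunction $\deg(K_W+\Delta_W)=0$. As $W$ has genus one, $\deg K_W=0$, and effectivity of the different forces $\Delta_W=0$. Concretely this means $W$ is disjoint from every other component of $\Delta^{=1}+D$ and that $S_i$ is smooth along $W$, with $W$ a reduced Cartier component of $F_z$; in particular $W$ is a connected component of $(\Delta^{=1}+D)\cap F_z$. Suppose now $W'\neq W$ is a second minimal lc center over $z$. By \autoref{claim:uniqueness_of_springs} there is a log isomorphism $(W',\Delta_{W'})\cong(W,\Delta_W)=(W,0)$; hence $W'$ is again a genus one curve (in particular it is a curve, not a point) with $\Delta_{W'}=0$, and therefore disjoint from all of $\Delta^{=1}+D$, and from $W$. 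Thus the statement reduces to: $F_z$ cannot contain two disjoint genus one curves of this type. I would stress here that this is precisely the gap that must be closed in \autoref{claim:finiteness}: the stabiliser bound used there fails for a $\varphi_i$-exceptional curve with $\Delta_W=0$, because $K_W+\Delta_W$ is only numerically trivial and \autoref{proposition:log_auto_of_curves} does not apply, so $\Aut_k(W)$ could be infinite; uniqueness is what removes such an automorphism from the generating set.

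To attack the reduced statement I would analyse $F_z=aW+a'W'+R$. Every component $C$ of $F_z$ other than a multiple of the whole fibre satisfies $C^2<0$, while $C\subseteq R$ is non-boundary, so $K_{S_i}\cdot C=-(\Delta_i+D_i)\cdot C\le 0$; adjunction then gives $2p_a(C)-2=K_{S_i}\cdot C+C^2<0$, so every component of $R$ is rational. Using $F_z\cdot W=F_z\cdot W'=0$, $W\cdot W'=0$, $n:=-W^2>0$, $n':=-W'^2>0$, and $\chi(\mathcal O_E)=-\tfrac12 E\cdot(E+K_{S_i})$, a short computation yields
\[
\chi(\mathcal O_R)=1+\tfrac12 an(a+1)+\tfrac12 a'n'(a'+1)\ge 3 ,
\]
so $R$ has at least three connected components (each rational), a strong hint that $F_z$ is non-reduced. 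More importantly, the dual graph of $F_z^{\mathrm{red}}$ carries the two genus one vertices $W,W'$, whence $p_a(F_z^{\mathrm{red}})\ge p_a(W)+p_a(W')\ge 2$.

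The main obstacle is to turn this into a contradiction with $p_a(F_z)=p_a(F_\eta)=0$ coming from the genus zero generic fibre. The difficulty is genuine: over an imperfect field $F_z$ may be a multiple (non-reduced) fibre, and then $p_a(F_z^{\mathrm{red}})$ may legitimately exceed $p_a(F_z)$, so the two genus one curves can hide inside a wild fibre. The plan to overcome this is to strip the multiplicities by a purely inseparable base change, in the spirit of \autoref{prop:pluricanonical_rep_for_genus_zero} (which forces $\Char k=2$): after such a base change and normalisation the generic fibre becomes a smooth rational curve and the total space becomes genuinely ruled over $T_i$, so that every vertical curve is rational and the wild genus one curves get resolved. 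Tracking the degree two horizontal boundary of \autoref{claim:horizontal_components} through this cover should show that at most one wild genus one curve can lie over a given $z$. Equivalently, one may phrase the step through Koll\'ar's sources and springs: a genus one (hence non-uniruled) source admits no non-trivial $\bP^1$-link, so the minimal lc center over $z$ is unique. In either formulation, making the imperfect-field bookkeeping precise is the technical heart, and it is where I expect the real work to lie.
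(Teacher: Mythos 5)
There is a genuine gap, and it sits exactly where you concede the ``real work'' lies: the contradiction you aim for is never actually derived, and the case division you would need is missing. Your whole strategy presupposes that the generic fibre of $\varphi_i$ has arithmetic genus zero, but $\deg K_{F_\eta}=-\deg(D+\Delta)|_{F_\eta}$, so this holds only when $D+\Delta$ has a component dominating $T_i$. The paper's proof instead splits on whether $\Theta:=D+\Delta^{=1}$ dominates $T_i$. When it does not (in particular in the elliptic-fibration case, which is precisely where a genus one lc centre naturally lives and where the claim has real content), there is no ``genus zero generic fibre'' to contradict; the paper proves uniqueness there by a connectedness argument: $(S,\Upsilon)$ with $\Upsilon=\Delta^{<1}$ is klt and $K_S+\Upsilon\sim_{\bQ,\varphi}-\Theta$, so $\varphi^{-1}(z)\cap\Theta$ is connected, and since adjunction forces $W$ to be disjoint from $\Theta-W$ (as you correctly observe), $W=\varphi^{-1}(z)\cap\Theta$ is the unique minimal centre. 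Your proposal has no argument covering this case.

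When $\Theta$ does dominate $T_i$, the paper proves something stronger than the statement you reduce to: no genus one curve occurs in $\varphi^{-1}(z)\cap\Theta$ at all. The mechanism is again adjunction plus a crepant MMP rather than fibre numerics: $W$ is isolated in $\Supp\Theta$, one runs the $(K_S+\Upsilon)$-MMP over $T_i$ (which never contracts $W$, since $(K_S+\Upsilon)\cdot W=-W^2\geq 0$) until the transform of the horizontal part of $\Theta$ meets $W$; each step is crepant for $(S,\Theta+\Upsilon)$, so the different on $W$ would become nonzero, contradicting $\Diff_W(\Theta-W+\Upsilon)=0$. By contrast, your $\chi(\sO_R)$ computation requires $S_i$ to be Gorenstein along all of $F_z$ (adjunction only gives you smoothness along $W$ and $W'$), the comparison $p_a(F_z^{\mathrm{red}})\le p_a(F_z)=p_a(F_\eta)$ is exactly what you admit you cannot control over imperfect fields, and the proposed inseparable base change is left entirely as a plan. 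Your side remark that this claim is what rescues the descent step for numerically trivial $K_W+\Delta_W$ is accurate, but the proof itself needs to be rebuilt along the dichotomy on $\Theta$.
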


\begin{proof}
This follows immediately from \Cref{proposition:useful_results_from_gluing_article} with $\Theta=D_i+\Delta_i^{=1}$ and $\Upsilon=\Delta_i^{<1}$.
\end{proof}

\subsubsection{Descent}
To conclude, we must show that the Cartier divisor $H$ descends along the quotient $q\colon T\to T_0=T/R(B^\tau)$. There is a useful reduction step we will make.

\begin{claim}\label{claim:trivial_gluing}
To descend $H$, we may assume that $\dim T_i\geq 1$ for all $i$.
\end{claim}

\begin{proof}
Assume that we have found a line bundle $H_0$ on $T_0$ such that $(q^*H_0)|_{T_i}\cong H|_{T_i}$ for every $i\geq 1$ such that $\dim T_i\geq 1$. If $\dim T_j=0$, then we trivially have $(q^*H_0)|_{T_j}\cong \sO_{T_j}\cong H|_{T_i}$. So $H_0$ is the line bundle we are looking for.
\end{proof}

For the rest of the proof, we use the method of~\cite[proof of Theorem~5.38]{Kollar_Singularities_of_the_minimal_model_program} and keep the notation of \Cref{claim:finiteness}. 

Let $T_H:=\Spec_T(\sum_{r\geq 0}H^0(T,rH))$ be the total space of $H$, and similarly
		$$E_H:=T_H\times_T E=\bigsqcup_{\Gamma\in\pi_0(D^n)} \Spec_T\biggl( \sum_{r\geq 0} H^0\bigl(\Gamma, rm\left(K_\Gamma+\Delta_\Gamma\right)\bigr)\biggr)\overset{j_H}{\longrightarrow} T_H.$$
Since $\tau$ is a log isomorphism of $(D^n,\Diff_{D^n}\Delta)$, the involution $B^\tau\colon E\cong E$ lifts to an involution $B^\tau_H\colon E_H\cong E_H$. The Cartier divisor $H$ descends to the quotient $T/R(B^\tau)$ if the equivalence relation $R(B^\tau_H)\rightrightarrows T_H$ is finite (see~\cite[Proposition~9.48 and Section~9.53]{Kollar_Singularities_of_the_minimal_model_program}).

As in the proof of \Cref{claim:finiteness}, we consider the pullback of $R(B^\tau_H)\rightrightarrows T_H$ to $E_H$. It is generated by two types of pre-relations:
	\begin{enumerate}
		\item the fibers of the structural morphism $j_H\colon E_H\to T_H$, 
		\item the isomorphisms $B^\phi_H\colon E_H\cong E_H$ induced by the $B^\phi\colon E\cong E$, where $\phi\in G\subset \Aut_k(E)$. (More precisely, each $\phi\colon E\cong E$ induce an automorphism of the graded section ring
                  $$\bigoplus_{\Gamma\in \pi_0(D^n)} \sum_{r\geq 0}H^0\bigl(\Gamma,rm\left(K_{\Gamma}+\Delta_\Gamma\right)\bigr),$$ inducing in turn the automorphism $B^\phi_H$ of $E_H$.)
	\end{enumerate}
	We have seen in \Cref{claim:finiteness} that the group $G=\langle B^\tau,\{B^\xi\}\rangle\subset \Aut_k(E)$ is finite. Therefore, $G_H:=\langle B_H^\tau,\{B_H^\xi\}\rangle$ is also finite, and so the $G_H$-orbits on $T_H$ are finite. 
	
	Now we must take in account the fibers of $j_H\colon E_H\to T_H$ which are not single $G_H$-orbits. The new relations we get are supported on the fibers of $E_H\to T$ over the closed finite subset $Z\subset j(E)$. By \Cref{claim:uniqueness_of_springs}, we see that the new relations come from some isomorphisms between the $W_z$, inducing isomorphisms between the section rings of the Cartier divisors $m(K_S+\Delta+D)|_{W_z}$, where $W_z$ runs through the minimal lc centers of $(S,\Delta+D)$ over the points $z$ of $Z$. As in~\cite[proof of Theorem~5.38]{Kollar_Singularities_of_the_minimal_model_program}, it suffices to show that the pluricanonical representations
	\begin{equation}\label{eqn:pluricanonical_rep}
	\Aut_k\bigl(W_z,\Diff^*_{W_z}(\Delta+D)\bigr)\longrightarrow \GL_k H^0\left(W_z,m(K_{W_z}+\Diff^*_{W_z}(\Delta+D)\right)
	\end{equation}
are finite.

	If $z\in T_i$ with $\dim T_i=2$ and a minimal lc center $W_z$ is one-dimensional, then by \Cref{claim:curves_in_fibers_in_big_case} we see that $j^{-1}(z)$ is contained in a $G$-orbit. If $z\in T_i$ with $\dim T_i=1$ and a minimal lc center $W_z$ is a genus one curve, then by \Cref{claim:genus_one_curves_in_fibers} we see that $j^{-1}(z)$ is also contained in a $G$-orbit. Thus we may assume that the minimal lc centers $W_z$ above $z$ are either zero-dimensional or genus zero curves. 
	
	If $W_z$ is zero-dimensional, then it is the spectrum of a finite field extension of $k$, and thus $\Aut_k(W_z)$ is finite. If $W_z$ is a genus zero curve, then the finiteness of the pluricanonical representations \eqref{eqn:pluricanonical_rep} is proved in \Cref{prop:pluricanonical_rep_for_genus_zero}. This concludes the proof. \hfill\qedsymbol
	

	

\begin{remark}\label{remark:Kollars_theory_illustrated}
The discussion above is a simple illustration of some key features of Koll\'{a}r's theory of sources and springs for crepant log structures. 
	\begin{enumerate}
		\item In Koll\'{a}r's terminology, $\varphi\colon (S,\Delta+D)\to T$ is a crepant log structure, the components of $D$ are the sources (of their images in $T$), and the components of $E$ are the springs (of their images in $T$). 
		\item In \Cref{claim:log_involution_over_sources}, in case $\Theta=\Gamma_1+\Gamma_2$, the fact that $(\Gamma_1,\Delta_{\Gamma_1})$ and $(\Gamma_2,\Delta_{\Gamma_2})$ are log isomorphic to each other corresponds to the uniqueness of the source up to a crepant birational map; see \cite[Theorem-Defintion~4.45(1)]{Kollar_Singularities_of_the_minimal_model_program}.
		\item  In \Cref{claim:log_involution_over_sources}, in case $\Theta\to T_i$ is a separable double cover, the fact that the extension of function fields is Galois and that the Galois involution can be realized by a log automorphism of $(\Theta,\Delta_\Theta)$ corresponds to the Galois property of springs; see~\cite[Theorem-Definition~4.45(5)]{Kollar_Singularities_of_the_minimal_model_program}.
		\item In \Cref{claim:log_involution_over_sources}, in case $\Theta=\Gamma_1+\Gamma_2$, we have that $(S_i, \Gamma_1+\Gamma_2+(\Delta_i-\Theta))\to T$ is a weak $\bP^1$-link (see~\cite[Remark~5.2.10]{Posva_Gluing_for_surfaces_and_threefolds}). Indeed, the general fiber $F=\varphi_i^{-1}(t)$ is an integral Gorenstein proper curve of genus zero over $k(t)$ with $H^0(F,\sO_F)=k(t)$ and has an invertible sheaf $\Gamma_1|_F$ of degree $\Gamma_1\cdot_{k(t)}F=1$. Thus $F\cong \bP^1_{k(t)}$ by~\cite[0C6U]{Stacks_Project}.
		\item To show finiteness and descent, we have reduced both times to a question about representation of a group of log automorphisms of $E$ on the space of pluricanonical sections of $H$. This corresponds to the crucial role that pluricanonical representations have in~\cite[Theorem~5.36, Corollary~5.37 and Theorem~5.38]{Kollar_Singularities_of_the_minimal_model_program}. Our case is easily manageable since the groups of log automorphisms that appear are finite to begin with, so the finiteness of the representations is automatic.
	\end{enumerate}
\end{remark}

\subsection{Relative case}

We prove abundance in the relative setting. We deduce the relative version from the absolute version, following the strategy of~\cite{Tanaka_Abundance_for_lc_surfaces_over_imperfect_fields}.

\begin{assumption}\label{notation:relative_setting}
Let $(S_0,\Delta_0)$ be a slc surface and $f_0\colon S_0\to B_0$ a projective morphism, where $B_0$ is quasi-projective over a field $k$ of positive characteristic. We assume that $K_{S_0}+\Delta_0$ is $f_0$-nef.
\end{assumption}

We aim to show that $K_{S_0}+\Delta_0$ is $f_0$-semi-ample. 

\subsubsection{Reduction to separable nodes} As in \Cref{section:reduction_to_sep_nodes_in_abs_case}, we reduce to the case where $S_0$ has only separable nodes. The proof is similar, so we omit it.

\subsubsection{Reduction to the projective case}
Next we reduce to case of a projective base. 
Since $B_0$ is quasi-projective over $k$, it embeds as a dense open subset of a projective $k$-scheme $B$. We look for a projective slc compactification of $(S_0,\Delta_0)$ over $B$. There is a commutative diagram
	$$\begin{tikzcd}
	S_0\arrow[r, "j", hook]\arrow[d, "f_0"] & S\arrow[d, "f"] \\
	B_0\arrow[r, hook] & B\rlap{,}
	\end{tikzcd}$$
where $f$ is projective and $j$ is a dense open embedding. Since $S_0$ is already projective over $B_0$, we may assume that $f^{-1}(B_0)=S_0$. Using the method of~\cite[Theorem~3.7.1]{Posva_Gluing_for_surfaces_and_threefolds}, we may furthermore assume that the singular codimension one points of $S$ are contained in $S_0$. By our first reduction step, we may and will assume that these nodes are separable.


Let $\Delta$ be the closure in $S$ of $\Delta_0$, and let $(\bar{S},\bar{\Delta}+\bar{D})$ be the normalization of $(S,\Delta)$. By the hypothesis on~$S$, $\bar{D}$ is the closure of the conductor divisor of $S_0^n\to S_0$. Thus the normalization of its dense open subset $\bar{D}\cap S_0^n$ is equipped with an involution $\tau$. Since $\bar{D}^n$ is a normal projective curve, $\tau$ extends uniquely to an involution of $\bar{D}^n$.

Log resolution for normal excellent surfaces is known; see~\cite{Lipman_Resolution_of_surface_singularities, Artin_Lipman_proof_of_resolution_for_surfaces}. In particular, by repeatedly blowing up some closed points on the complement of $S_0^n$ and taking the strict transforms of our divisors, we may achieve the following:
	\begin{enumerate}
		\item The scheme $\bar{S}$ is regular at the points of the boundary $\partial\bar{S}:=\bar{S}\setminus S_0^n$.
		\item $\bar{\Delta}\cap \bar{D}$
                  is contained in $S_0^n$. 
		\item If $\bar{E}$ is the divisorial part of $Z$, then $\Supp(\bar{\Delta})+\bar{D}+\bar{E}$ is simple normal crossing in a neighborhood of~$Z$.
	\end{enumerate}
In particular, $(\bar{S},\bar{\Delta}+\bar{D})$ is lc. However, $K_{\bar{S}}+\bar{\Delta}+\bar{D}$ might not be nef over $B$. We can run a $(K_{\bar{S}}+\bar{\Delta}+\bar{D})$-MMP over $B$, but in order to denormalize, we ultimately want to recover an action of $\tau$ on the \emph{log pair} obtained using adjunction on the pushforward of $\bar{D}$. In particular, $\tau$ must preserve the different induced by the pushforward of $\bar{\Delta}$, and so it is crucial to control the intersection $\bar{D}\cap\bar{\Delta}$ along the process. It turns out that the $(K_{\bar{S}}+\bar{\Delta}+\bar{D})$-MMP over $B$ is good enough, but this requires a proof.

\begin{claim}\label{claim:pseudo_MMP_for_slc}
There exists a birational morphism over $B$
		$$\varphi\colon (\bar{S},\bar{\Delta}+\bar{D})\longrightarrow (\bar{S}',\bar{\Delta}'+\bar{D}'),$$
with $\bar{\Delta}'=\varphi_*\bar{\Delta}$ and $\bar{D}'=\varphi_*\bar{D}$, such that
	\begin{enumerate}
		\item\label{claim322-a} $(\bar{S}',\bar{\Delta}'+\bar{D}')$ is lc and $K_{\bar{S}'}+\bar{\Delta}'+\bar{D}'$ is nef over $B$;
		\item\label{claim322-b} $\varphi\colon S_0^n\to \varphi(S_0^n)$ is an isomorphism of open subsets;
		\item\label{claim322-c} $\bar{D}^n\cong (\bar{D}')^n$, so we can transport the involution $\tau$ to $(\bar{D}')^n$;
		\item\label{claim322-d} the induced $R(\tau)\rightrightarrows \bar{S}'$ is a finite equivalence relation; and 
		\item\label{claim322-e} $\tau$ preserves the pullback of $K_{\bar{S}'}+\bar{\Delta}'+\bar{D}'$ to $(\bar{D}')^n$.
	\end{enumerate}
\end{claim}

\begin{proof}
Consider an irreducible curve $C\subset \bar{S}$ vertical over $B$, and assume that $C$ is a $(K_{\bar{S}}+\bar{\Delta}+\bar{D})$-negative extremal ray. We make the following observations:
	\begin{itemize}
		\item Since $K_{\bar{S}}+\bar{\Delta}+\bar{D}$ is nef over $B_0$, the curve $C$ belongs to the boundary $\partial\bar{S}$.
		\item Since $C$ is not a component of $\bar{\Delta}+\bar{D}$, we have $K_{\bar{S}}\cdot C<0$ (the intersection makes sense since $\bar{S}$ is regular in a neighborhood of $C$).
		\item By~\cite{Tanaka_MMP_for_excellent_surfaces}, we can contract $C$, and since $K_{\bar{S}}+\bar{\Delta}+\bar{D}$ is pseudo-effective, we obtain a birational contraction. Thus $C^2<0$ by~\cite[Section~10.1]{Kollar_Singularities_of_the_minimal_model_program}.
	\end{itemize}
By \Cref{proposition:Castelnuovo_criterion}, there is a proper birational morphism $\varphi_1\colon \bar{S}\to\bar{S}_1$ to a projective $B$-surface such that $\varphi_1(C)$ is a regular point of $\bar{S}_1$ and $\varphi_1\colon \bar{S}\setminus C\to \bar{S}_1\setminus$ is an isomorphism. Define $\bar{\Delta}_1=(\varphi_1)_*\bar{\Delta}$ and $\bar{D}_1=(\varphi_1)_*\bar{D}$. We claim that $(\bar{S}_1,\bar{\Delta}_1+\bar{D}_1)$ is lc: this is because $\varphi_1$ is a step of the $(K_{\bar{S}}+\bar{\Delta}+\bar{D})$-MMP.

Assume that $C$ intersects $\bar{D}$. Letting $K=H^0(C,\sO_C)$, we have by \Cref{proposition:Castelnuovo_criterion} that
		\begin{equation*}
		(K_{\bar{S}}+\bar{\Delta}+\bar{D})\cdot_K C = (\bar{\Delta}+\bar{D})\cdot_K C +K_{\bar{S}}\cdot_KC \geq 1 - 1 = 0,
		\end{equation*}
which gives a contradiction with the fact that $C$ was chosen to be $(K_{\bar{S}'}+\bar{\Delta}'+\bar{D}')$-negative. So $C$ does not intersect $\bar{D}$, and thus $\varphi_1(C)\notin\bar{D}_1$. In other words, $\varphi_1$ is an isomorphism in a neighborhood of $\bar{D}$. From this it follows immediately that properties \eqref{claim322-c}, \eqref{claim322-d} and \eqref{claim322-e} are satisfied on $\bar{S}_1$. Since $C\subset \partial\bar{S}$, we also see that property \eqref{claim322-b} holds for $\varphi_1$.



This shows that we can run a $(K_{\bar{S}}+\bar{\Delta}+\bar{D})$-MMP over $B$ while retaining the properties \eqref{claim322-b}, \eqref{claim322-c}, \eqref{claim322-d} and~\eqref{claim322-e}. Since $K_{\bar{S}}+\bar{\Delta}+\bar{D}$ is pseudo-effective over $B$, the MMP stops with a relative minimal model $(\bar{S}',\bar{\Delta}'+\bar{D}')$, which is the one we are looking for.
\end{proof}

By the two last items of \Cref{claim:pseudo_MMP_for_slc} and by~\cite[Theorem 1]{Posva_Gluing_for_surfaces_and_threefolds}, we can de-normalize the pair $(\bar{S}', \bar{\Delta}'+\bar{D}')$ along the gluing data given by $R(\tau)\rightrightarrows \bar{S}'$. We obtain an slc surface pair $(S',\Delta')$ which contains $(S_0,\Delta_0)$ as an open subset and is such that $K_{S'}+\Delta'$ is nef over $B$. The proof that $S'$  is projective over $B$ is similar to the analog statement in~\cite[Theorem~3.7.1]{Posva_Gluing_for_surfaces_and_threefolds}.

\begin{claim}
With the notation as above, it is sufficient to show that $K_{S'}+\Delta'$ is semi-ample over $B$.
\end{claim}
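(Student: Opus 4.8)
The plan is to deduce the $f_0$-semi-ampleness of $K_{S_0}+\Delta_0$ from the $f'$-semi-ampleness of $L:=K_{S'}+\Delta'+E'$ over $B$, simply by restricting to the open subset $S_0\subset S'$. The starting observation, already recorded in the construction above, is that $S_0\hookrightarrow S'$ is open, that $f'$ restricts over $B_0\hookrightarrow B$ to $f_0$, and that $\Delta'|_{S_0}=\Delta_0$ while $E'|_{S_0}$ is empty. Consequently $L|_{S_0}=K_{S_0}+\Delta_0$, so it suffices to check that relative semi-ampleness is inherited when we base change along the open immersion $B_0\subset B$ and then restrict to the open $S_0$ of the total space.

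To make this precise I would first record the geometric form of relative semi-ampleness: if $L$ is $f'$-semi-ample, then for some $m>0$ there is a factorization $f'\colon S'\xrightarrow{g} Y\xrightarrow{h} B$ with $h$ projective, together with an $h$-ample Cartier divisor $A$ on $Y$, such that $mL\sim g^*A$. Next I would base change along $B_0\hookrightarrow B$: setting $Y_0:=h^{-1}(B_0)$, $h_0:=h|_{Y_0}$ and $g_0:=g|_{(f')^{-1}(B_0)}$, the divisor $A|_{Y_0}$ is still $h_0$-ample (ampleness relative to a morphism is preserved under open base change) and $mL$ continues to pull back from it. Restricting further to the open subset $S_0\subseteq (f')^{-1}(B_0)$, the composite $S_0\to Y_0\to B_0$ agrees with $f_0$ and $m(K_{S_0}+\Delta_0)=mL|_{S_0}\sim (g_0|_{S_0})^*(A|_{Y_0})$ is the pullback of a divisor ample over $B_0$. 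This exhibits $K_{S_0}+\Delta_0$ as $f_0$-semi-ample and proves the claim.

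Once the identifications of the first paragraph are in hand, the argument is entirely formal, so I do not expect a serious obstacle; the only point demanding care is precisely that compatibility of the construction over $S_0$, namely that the compactification, the blow-ups, the $B$-MMP and the de-normalization are all isomorphisms over $S_0^n$, so that restriction genuinely recovers $(S_0,\Delta_0)$ together with $f_0$. This was arranged in the preceding steps — in particular \autoref{claim:pseudo_MMP_for_slc}(2) guarantees that $S_0^n\to\varphi(S_0^n)$ is an isomorphism, and the de-normalization introduces nothing along $S_0$, whence $E'|_{S_0}$ is empty — so the restriction is legitimate. I would also note that $f_0$-semi-ampleness is local on $B_0$, which is what justified reducing to the affine irreducible case at the outset and therefore proving the relative theorem in full generality from this compactified situation.
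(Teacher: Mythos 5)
Your argument is correct and establishes the same underlying fact as the paper, namely that relative semi-ampleness is inherited by an open subscheme $V$ of the total space lying over an open subscheme of the base; the two proofs differ only in how that fact is implemented. The paper works directly with the evaluation map: it shows that surjectivity of $\epsilon_L\colon g^*g_*L\to L$ passes first to the open base change (citing \cite[2.12]{Cascini_Tanaka_Relative_semi_ampleness_in_pos_char}) and then, via a small commutative diagram, to the restriction $L_V$. You instead use the equivalent geometric characterization: factor $f'$ through the relatively ample model $Y$ with $mL\sim g^*A$, and restrict $Y$, $A$ and $g$ along the open immersions; since relative ampleness is stable under (open) base change and $h_0$ remains proper, some multiple of $A|_{Y_0}$ is $h_0$-free and hence $m(K_{S_0}+\Delta_0)$ is $f_0$-free. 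Both routes are standard and essentially interchangeable here; the paper's version is slightly more robust in that it never invokes projectivity of $f'$ to produce the ample model, while yours is more geometric and makes transparent exactly which identifications ($\Delta'|_{S_0}=\Delta_0$, $E'|_{S_0}=\emptyset$, $S_0^n\cong\varphi(S_0^n)$) are being used --- identifications you correctly flag as the only real content of the reduction.
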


\begin{proof}
Since $(S',\Delta')\times_BB_0=(S,\Delta)$, the claim holds because relative semi-ampleness is local on the base (and more generally preserved by base-change; see~\cite[Lemma~2.12]{Cascini_Tanaka_Relative_semi_ampleness_in_pos_char}).
\end{proof}


\subsubsection{Conclusion of the proof}\label{section:relative_to_absolute}
We assume from now on that $B_0$ is projective over $k$ and $(S_0,\Delta_0)$ slc projective over $B_0$ with normalization $(\bar{S}_0,\bar{\Delta}_0+\bar{D})$. Let $A$ be an Cartier ample divisor on $B_0$. By~\cite[Proposition~4.11]{Tanaka_Abundance_for_lc_surfaces_over_imperfect_fields}, the divisor $K_{\bar{S}_0}+\bar{\Delta}_0+\bar{D}+n^*f_0^*(mA)$ is nef for $m$ large enough. Thus $K_{S_0}+\Delta_0+f_0^*(mA)$ is nef, hence semi-ample over $k$ by \Cref{theorem:abundance_absolute_case}. In particular, it is $f_0$-semi-ample; say that $L:=\sO(r(K_{S_0}+\Delta_0+f_0^*(mA)))$ defines a morphism over $B_0$. Tensoring the surjection $f_0^*(f_0)_*L\to L$ by $f_0^*\sO(-mA)$, we see that $K_{S_0}+\Delta_0$ is $f_0$-semi-ample. 

This completes the proof of \Cref{theorem:abundance_relative}. \hfill\qedsymbol

\section{Relative abundance for one-parameter families of surfaces}\label{section:mixed_characteristic}
In this section, we work over an excellent regular one-dimensional scheme $S$ that is separated and essentially of finite type over $\Spec(\bZ)$. If the structural morphism $S\to \Spec(\bZ)$ is not constant, we say that $S$ is of \emph{mixed characteristic}. If the structural morphism has constant value $p\geq 0$, we say that $S$ is of \emph{equicharacteristic} $p$. (With this terminology, it may happen that $S$ is of mixed characteristic while some component of $S$ is of equicharacteristic.)

\begin{observation}\label{observation:mixed_char_base}
With the notation as above, assume that $S$ is irreducible of mixed characteristic. \emph{Then the only point of\, $S$ with residue characteristic zero is the generic point of $S$.} Indeed, let $j\colon S\to \Spec(\bZ)$ be the structural morphism. If $s\in S$ is a specialization of $\eta\in S$, then $j(s)$ is a specialization of $j(\eta)$ (simply because $j$ is continuous). Since $j$ is not constant, this implies that the generic point of $S$ maps to the generic point of $\Spec(\bZ)$. If $S$ is the spectrum of a DVR, then our claim follows at once. So assume that $S$ is of finite type over $\Spec(\bZ)$. Since $j$ is not constant, it must be flat, and since $\dim S=1$, we see that the fibers of $j$ are zero-dimensional. Thus $j$ is quasi-finite. Using Zariski's main theorem for quasi-finite morphisms (see~\cite[05K0, 0F2N]{Stacks_Project}), we see that if $s\in S$ is a closed point, then $j(s)\in\Spec(\bZ)$ is a closed point. Therefore, closed points of $S$ cannot be sent to the generic point of $\Spec(\bZ)$.
\end{observation}

\begin{lemma}\label{lemma:slc_adjunction}
Let $S$ be an excellent regular one-dimensional scheme and $X\to S$ an equidimensional separated morphism of finite type with $S_2$ fibers. Assume that $\Delta$ is a $\bQ$-divisor on $X$ such that $(X,\Delta+X_t)$ is slc for every closed point $t\in S$. Then: 
	\begin{enumerate}
		\item Every fiber $X_s$ is demi-normal, and there is a boundary $\Delta_s$ on $X_s$ such that $K_X+\Delta+X_s|_{X_s}= K_{X_s}+\Delta_s$.
		\item Every pair $(X_s,\Delta_s)$ is slc.
	\end{enumerate}
\end{lemma}

\begin{proof}
Fix a point $s\in S$. If $s$ is a generic point, the result is obtained by localization. So assume that $s\in S$ is closed. The scheme $X_s$ is $S_2$ by assumption, and it is generically reduced since the divisor $\Delta+X_s$ has coefficients at most one. So $X_s$ it is reduced. By~\cite[Theorem~2.31]{Kollar_Singularities_of_the_minimal_model_program}, it is at worst nodal in codimension one. Therefore, $X_s$ is demi-normal. In particular, we can perform adjunction on $X_s$ (see~\cite[Definition~4.2]{Kollar_Singularities_of_the_minimal_model_program}), and we obtain a boundary $\Delta_s$ on $X_s$ satisfying $K_X+\Delta+X_s|_{X_s}=K_{X_s}+\Delta_s$. 

Let $(\bar{X},\bar{\Delta}+\bar{D}+\bar{X}_s)$ be the normalization of $(X,\Delta+X_s)$. The latter pair is slc by assumption, so the former pair is lc. If $\bar{X}_s^n$ is the normalization of $\bar{X}_s$, by adjunction $(\bar{X}_s^n,\Diff(\bar{\Delta}+\bar{D}))$ is lc. Thus we have a commutative diagram
		$$\begin{tikzcd}
		\left(\bar{X}_s^n,\Diff(\bar{\Delta}+\bar{D})\right) \arrow[d, "\mu"] \arrow[r] & \left(\bar{X},\bar{\Delta}+\bar{D}+\bar{X}_s\right) \arrow[d] \\
		\left(X_s,\Delta_s\right)\arrow[r] & \left(X,\Delta+X_s\right)\rlap{,}
		\end{tikzcd}$$
where every arrow different from $\mu$ is a finite crepant morphism. Therefore, $\mu$ is also finite crepant.

We claim that $\mu$ is the normalization morphism. Since $X_s$ is a reduced hypersurface in $X$, we see that $X$ is regular at the generic points of $X_s$. Thus $\bar{X}\to X$ is an isomorphism at the generic points of $\bar{X}_s$, and it follows that $\bar{X}_s\to \bar{X}$ is finite surjective and birational. This implies that $\mu$ is indeed the normalization morphism of $X_s$. Since $(\bar{X}_s^n,\Diff(\bar{\Delta}+\bar{D}))$ is lc, we obtain by definition that $(X_s,\Delta_s)$ is slc.
\end{proof}

\begin{theorem}\label{theorem:abundance_in_mixed_characteristic}
Let $S$ be an excellent regular one-dimensional scheme that is separated and of finite type over $\Spec(\bZ)$. Assume that no component of $S$ is of equicharacteristic zero. Let $f\colon (X,\Delta)\to S$ be a surjective flat projective morphism of relative dimension two with $S_2$ fibers. Let $\Delta$ be a $\bQ$-divisor on $X$ such that $(X,\Delta+X_s)$ is slc for every closed point $s\in S$.

If $K_X+\Delta$ is $f\!$-nef, then it is $f\!$-semi-ample.
\end{theorem}

\begin{proof}
For any point $s\in S$ (not necessarily closed), it follows from \Cref{lemma:slc_adjunction} that the fiber $(X_s,\Delta_s)$ is a projective slc surface over $k(s)$ with $K_{X_s}+\Delta_s$ nef. Using either \Cref{theorem:abundance_absolute_case} or~\cite[Theorem 8.5]{Flips_and_abundance_for_3folds}, we obtain that $K_{X_s}+\Delta_s$ is semi-ample. Therefore, the restriction of $K_X+\Delta$ to each fiber is semi-ample. We need to prove that it is semi-ample over $S$.

Since $S$ is the disjoint union of its irreducible components, we reduce to the case where $S$ is irreducible of equicharacteristic $p>0$ or irreducible of mixed characteristic. In the first case, the result follows from~\cite[Theorem 1.1]{Cascini_Tanaka_Relative_semi_ampleness_in_pos_char}. In the second case, we see by \Cref{observation:mixed_char_base} that the generic point of $S$ is its unique point with residue characteristic zero, and therefore the result follows from~\cite[Theorem 1.2]{Witaszek_Relative_semiampleness_in_mixed_char}.
\end{proof}

\section{Applications to threefolds}

\subsection{Reduced boundary of dlt pairs}


\begin{lemma}\label{lemma:boundary_of_3fold_dlt}
Let $(X,\Delta)$ be a $\bQ$-factorial dlt threefold defined over an arbitrary field $k$ of characteristic $p>5$. If $E\subset \Delta^{=1}$ is an irreducible component, then $E$ is normal.
\end{lemma}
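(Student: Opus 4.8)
The plan is to verify Serre's criterion for $E$: since $X$ is normal and $E\subset\Delta^{=1}$ is a prime divisor, $E$ is normal as soon as it is $S_2$ and regular in codimension one. By the definition of dlt, the generic point of $E$ lies in the open locus where $(X,\Delta)$ is snc, so $E$ is regular there; the genuine content is therefore to establish $R_1$ at the codimension-one points of $E$ and the $S_2$ property at the closed points. I would treat these two conditions separately, since only one of them survives a reduction to surfaces.

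For the $R_1$ condition, a codimension-one point $\xi$ of $E$ is a codimension-two point of $X$, so $\sO_{X,\xi}$ is a two-dimensional excellent normal local ring. Dlt-ness is preserved under localization, because the relevant discrepancies are computed by divisors whose centers pass through $\xi$ and the snc locus only grows when restricting to $\Spec\sO_{X,\xi}$; thus $(\Spec\sO_{X,\xi},\Delta_\xi)$ is a dlt excellent surface in the sense of \cite{Tanaka_MMP_for_excellent_surfaces}, and $E$ restricts to an irreducible component of its reduced boundary. The excellent-surface form of \autoref{lemma:reduced_boundary_dlt_surface_pair} then shows that this component is normal, i.e.\ that $\sO_{E,\xi}$ is a DVR. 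Letting $\xi$ range over all codimension-one points of $E$ yields $R_1$; concretely, it is the classification of two-dimensional dlt singularities (available because $p>5$) that rules out the transverse self-intersections and pinch points which would otherwise destroy normality in codimension one.

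For the $S_2$ condition there is no surface reduction, and this is where the three-dimensional and positive-characteristic nature of the problem is unavoidable. I would invoke the fact that $\bQ$-factorial dlt threefolds over a field of characteristic $p>5$ are Cohen--Macaulay (equivalently $S_3$, as $\dim X=3$), and combine it with the ideal-sheaf sequence $0\to\sI_E\to\sO_X\to\sO_E\to 0$ together with a depth estimate on $\sI_E$ to conclude $\depth_{\sO_{E,x}}\sO_{E,x}\geq\min\{2,\dim\sO_{E,x}\}$ at every $x\in E$. This is precisely the step that forces the hypothesis $p>5$, since the Cohen--Macaulayness (and the vanishing theorems underlying it) is only guaranteed in that range; equivalently, one may phrase it as the positive-characteristic threefold analogue of Koll\'{a}r's theorem that the reduced boundary of a dlt pair is $S_2$ \cite{Kollar_Singularities_of_the_minimal_model_program}.

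Combining $R_1$ and $S_2$ gives normality of $E$ by Serre's criterion, equivalently that the finite birational morphism $E^n\to E$ is an isomorphism. I expect the $S_2$ step to be the main obstacle: the $R_1$ reduction to \autoref{lemma:reduced_boundary_dlt_surface_pair} is essentially formal once one checks that localization preserves dlt, whereas $S_2$ rests on the characteristic-sensitive Cohen--Macaulayness of dlt threefolds and is the only place where the argument cannot be localized to the surface case.
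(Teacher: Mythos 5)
There is a genuine gap, and it sits exactly where you predicted: the $S_2$ step. Your plan rests on two claims, neither of which is available. First, the assertion that $\bQ$-factorial \emph{dlt} threefolds over a field of characteristic $p>5$ are Cohen--Macaulay is not a fact one can simply invoke: the results in this direction (Hacon--Witaszek, Arvidsson--Bernasconi--Lacini) concern \emph{klt} threefold singularities, and the dlt case is precisely where the boundary divisor interferes; there is no reference for it, and the present paper certainly does not assume it. Second, even granting that $X$ is CM, the ideal-sheaf sequence $0\to\sO_X(-E)\to\sO_X\to\sO_E\to 0$ does not formally yield $S_2$ for $E$: at a closed point $x\in E$ the depth inequality gives $\depth\sO_{E,x}\geq\min\{\depth_x\sO_X(-E)-1,\,3\}$, so you need $\sO_X(-E)$ to be \emph{maximal} Cohen--Macaulay at $x$, i.e.\ of depth $3$, whereas reflexivity only guarantees depth $2$. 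In characteristic zero the CM-ness of $\sO_X(-D)$ for $D$ inside the reduced boundary of a dlt pair is a theorem of Koll\'ar proved via vanishing theorems that fail in characteristic $p$; so your ``depth estimate on $\sI_E$'' is not a routine step but the entire difficulty, repackaged. (The $R_1$ half of your argument is essentially sound, modulo the observation that \autoref{lemma:reduced_boundary_dlt_surface_pair} is stated for surfaces over $k$ and must be applied to the excellent local surface $\Spec\sO_{X,\xi}$, which Tanaka's framework does permit; but $R_1$ was never the hard part.)

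The paper's route is different and avoids CM-ness altogether. One first observes that $(X,E)$ is plt in a neighbourhood of $E$ (the lc centers of a dlt pair are strata of the reduced boundary), and then invokes \cite[6.3]{Das_Waldron_MMP_for_3folds_over_imperfect_fields}, which proves normality of such plt centers for threefolds in characteristic $p>5$ by $F$-singularity techniques. Since that theorem requires the base field to be $F$-finite, the remaining work in the paper is a descent argument: one spreads $(X,E)$ out over a finitely generated (hence $F$-finite) subfield $l\subset k$, checks via \autoref{lemma:plt_descends} that the pair stays plt and via \cite[6.18]{Das_Waldron_MMP_for_3folds_over_imperfect_fields} that $\bQ$-factoriality descends, concludes normality over $l$, and transports it back to $k$. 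If you want to salvage your Serre-criterion strategy you would have to supply, in positive characteristic, both the CM-ness of $X$ and of $\sO_X(-E)$ near $E$ --- each of which is at least as hard as the normality statement itself.
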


\begin{proof}
The first paragraph of the proof of~\cite[Theorem~4.16]{Kollar_Singularities_of_the_minimal_model_program} works over any field and shows that the lc centers of a dlt pair are among the strata of its reduced boundary. Therefore, $(X,E)$ is plt in a neighborhood of $E$. Then we apply~\cite[Corollary~7.17]{Bhatt&Co_MMP_for_3folds_in_mixed_char} to obtain that $E$ is normal.
\end{proof}



\begin{lemma}\label{lemma:reduced_boundary_is_almost_demi_normal}
Let $(X,\Delta)$ be a $\bQ$-factorial dlt threefold defined over an arbitrary field $k$ of characteristic $p>5$. Let $\pi\colon Z\to \Delta^{=1}$ be the $S_2$-fication. Then $\pi$ is a finite universal homeomorphism and $Z$ is demi-normal.
\end{lemma}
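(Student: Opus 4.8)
The plan is to study $\Delta^{=1}$ separately along its codimension-one points and at the finitely many points where it fails to be $S_2$, using throughout that its irreducible components $E_1,\dots,E_r$ are normal by \autoref{lemma:boundary_of_3fold_dlt}.

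First I would establish that $\Delta^{=1}$ is nodal in codimension one. Let $\eta$ be the generic point of a prime divisor of $\Delta^{=1}$; it is a codimension-two point of $X$. If a single component $E_a$ passes through $\eta$, then $E_a$ is normal, hence regular in codimension one, so $\Delta^{=1}$ is regular at $\eta$. If several components $E_{a_1},\dots,E_{a_s}$ meet there, then since $(X,\Delta)$ is dlt it is, in codimension two, regular with transversally meeting boundary: $\mathcal{O}_{X,\eta}$ is regular and each $E_{a_i}=V(f_i)$ has $f_i\in\mathfrak{m}_\eta\setminus\mathfrak{m}_\eta^2$. A discrepancy computation on the blow-up of the curve forces $s\le 2$, since three or more reduced boundary components, or two tangent ones, would produce an exceptional valuation of log discrepancy $<0$, violating log canonicity. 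Hence $\mathcal{O}_{\Delta^{=1},\eta}\cong \mathcal{O}_{X,\eta}/(f_1f_2)$ is a node, and it is separable because $\Char k(\eta)=p>5\neq 2$.

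Next I would bring in the $S_2$-fication. By \autoref{proposition:S_2_fication}, $\pi\colon Z\to\Delta^{=1}$ is finite, is an isomorphism over the $S_2$-locus $U$ with $\Delta^{=1}\setminus U$ of codimension $\geq 2$ (so a finite set of closed points), and the normalization $\bigsqcup_a E_a\to\Delta^{=1}$ factors through $\pi$. Since $\pi$ is an isomorphism in codimension $\leq 1$, the codimension-one local rings of $Z$ coincide with those of $\Delta^{=1}$ and are thus regular or nodal; as $Z$ is $S_2$ by construction, $Z$ is demi-normal. It then remains to prove that $\pi$ is a universal homeomorphism. Being finite and birational, $\pi$ is surjective and an isomorphism over $U$, so only the fibres over the finitely many bad points $z$ need attention. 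Radicialness is automatic: each $E_a\ni z$ is normal and $E_a\hookrightarrow\Delta^{=1}$ is a closed immersion, so the point of $E_a$ above $z$ has residue field $k(z)$; as the normalization factors through $Z$, every point of $\pi^{-1}(z)$ has residue field trapped between two copies of $k(z)$, hence equal to $k(z)$.

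The remaining and hardest step is to show that $\pi^{-1}(z)$ is a single point; this is where the threefold geometry is essential. I would check it on the completion $\widehat{\mathcal{O}_{\Delta^{=1},z}}$ --- legitimate because being a universal homeomorphism is fpqc-local on the target and the $S_2$-fication commutes with completion for excellent rings --- and reduce to proving that the punctured spectrum of the completion is connected, for then its $S_2$-fication is local and $z$ has a unique preimage. Each $E_a$ is normal and excellent, hence analytically irreducible, so the formal branches of $\Delta^{=1}$ at $z$ are exactly the completions $\widehat{\mathcal{O}_{E_a,z}}$, each two-dimensional with connected punctured spectrum; and since any two components $E_a,E_b$ through $z$ are prime divisors on the threefold $X$, the intersection $E_a\cap E_b$ has dimension $\geq 1$ at $z$, so the corresponding formal branches overlap in positive dimension and still overlap after deleting the closed point. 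The punctured spectrum is thus a union of connected pieces with pairwise non-empty overlaps, hence connected, which yields injectivity and completes the proof that $\pi$ is a universal homeomorphism. The crux, and the main obstacle, is precisely this non-separation: it relies on the fact that two prime divisors on a threefold always meet in codimension one, and that normal excellent local rings are analytically irreducible, so that the $S_2$-fication introduces no new formal branches and cannot disconnect the existing ones over the non-$S_2$ points.
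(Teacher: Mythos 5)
Your argument is correct and follows essentially the same route as the paper, which simply delegates the two key steps to citations: the codimension-one nodal structure of $\Delta^{=1}$ to \cite[2.31]{Kollar_Singularities_of_the_minimal_model_program}, and the injectivity of $\pi$ on geometric points to the argument of \cite[5.1]{Waldron_MMP_for_3_folds_in_char_>5} with \autoref{lemma:boundary_of_3fold_dlt} supplying normality of the components. One caveat: your closing claim that two prime divisors on a threefold always meet in codimension one is false in general (e.g.\ two planes through the vertex of the cone over a quadric surface meet only at the vertex) --- it is the $\bQ$-factoriality hypothesis that makes some multiple of each $E_a$ Cartier and hence, via Krull's height theorem, forces $E_a\cap E_b$ to have dimension $\geq 1$ at $z$, so that dependence should be made explicit.
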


\begin{proof}
The $S_2$-fication is characterized in \Cref{proposition:S_2_fication}. In particular, $\pi$ is finite and an isomorphism above the codimension one points. Thus by~\cite[Theorem~2.31]{Kollar_Singularities_of_the_minimal_model_program} and the fact that $Z$ is $S_2$, we obtain that $Z$ is demi-normal.

It remains to show that $\pi$ is a universal homeomorphism. It is surjective since it factors the normalization, and universally closed since it is finite. Thus it suffices to show that $\pi$ is injective on geometric points. This is proved as in~\cite[Proposition~5.1]{Waldron_MMP_for_3_folds_in_char_>5}, using \Cref{lemma:boundary_of_3fold_dlt} instead of~\cite[Proposition~2.11]{Waldron_MMP_for_3_folds_in_char_>5}.
\end{proof}

\begin{theorem}\label{theorem:restriction_to_boundary_is_semi_ample}
Let $(X,\Delta)$ be a projective $\bQ$-factorial dlt threefold over an arbitrary field $k$ of characteristic $p>5$. Assume that $K_X+\Delta$ is nef. Then $(K_X+\Delta)|_{\Delta^{=1}}$ is semi-ample.
\end{theorem}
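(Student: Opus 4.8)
The plan is to restrict the whole reduced boundary at once, repair its possible failure of being $S_2$, and recognize the result as a projective slc surface to which \autoref{theorem:abundance_absolute_case} applies. Write $S:=\Delta^{=1}$, a projective surface over $k$ (it is a divisor in the threefold $X$, hence pure two-dimensional) whose irreducible components are normal by \autoref{lemma:boundary_of_3fold_dlt}. Since $S$ need not be $S_2$, I would pass to its $S_2$-fication $\pi\colon Z\to S$ from \autoref{proposition:S_2_fication}: by \autoref{lemma:reduced_boundary_is_almost_demi_normal} the scheme $Z$ is demi-normal and $\pi$ is a finite universal homeomorphism. Set $L:=(K_X+\Delta)|_{S}$; it is $\bQ$-Cartier because $K_X+\Delta$ is, and it is nef because $K_X+\Delta$ is nef and restriction does not change intersection numbers with curves contained in $S$. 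The goal then splits into showing that $\pi^*L$ is semi-ample on $Z$ and descending semi-ampleness along $\pi$.

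The heart of the argument is to equip $Z$ with an slc boundary $\Delta_Z$ such that $\pi^*L=K_Z+\Delta_Z$. Because the normalization factors through the $S_2$-fication (\autoref{proposition:S_2_fication}), the normalization $n\colon Z^n\to Z$ coincides with the normalization of $S$, which by \autoref{lemma:boundary_of_3fold_dlt} is simply the disjoint union $\bigsqcup_i S_i$ of the (normal) components of $\Delta^{=1}$. Divisorial adjunction for the dlt pair $(X,\Delta)$ along each $S_i$ (see \cite[\S 4.1]{Kollar_Singularities_of_the_minimal_model_program}) gives
		$$(K_X+\Delta)|_{S_i}=K_{S_i}+\Diff_{S_i}(\Delta-S_i),$$
and dlt-ness makes each such pair lc. The coefficient-one part of these differents, coming from the components $\Delta^{=1}-S_i$ that meet $S_i$, is exactly the conductor $\bar D$ of $n$, while the remaining part $\bar B$ has coefficients in $[0,1)$; thus $(Z^n,\bar D+\bar B)$ is lc. Since $p>2$, the demi-normal $Z$ has only separable nodes, so \autoref{lemma:normalization_gives_involution} attaches an involution $\tau$ of the conductor of $Z^n$. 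I would then invoke the gluing theory of \cite[Theorem 1]{Posva_Gluing_for_surfaces_and_threefolds}: as $\tau$ is a log involution of $(\bar D^n,\Diff_{\bar D^n}\bar B)$, the pair $(Z^n,\bar D+\bar B)$ de-normalizes to an slc pair $(Z,\Delta_Z)$. Finally, $\pi^*L$ is $\bQ$-Cartier and pulls back under $n$ to $K_{Z^n}+\bar D+\bar B$, so \autoref{lemma: basic properties of normalization of demi-normal scheme}(4) identifies $\pi^*L=K_Z+\Delta_Z$.

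With this in place the conclusion is immediate. Applying \autoref{theorem:abundance_absolute_case} to each connected component of $Z$, the projective slc surface pair $(Z,\Delta_Z)$ has $K_Z+\Delta_Z=\pi^*L$ nef, hence semi-ample. I would then descend along the finite universal homeomorphism $\pi\colon Z\to S$ exactly as in the reduction to separable nodes inside the proof of \autoref{theorem:abundance_absolute_case}: a power of the Frobenius of $S$ factors as $S\to Z\xrightarrow{\pi} S$, so base-point-freeness of a multiple of $\pi^*L$ forces base-point-freeness of a multiple of $L$. Therefore $L=(K_X+\Delta)|_{\Delta^{=1}}$ is semi-ample.

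The step I expect to be the main obstacle is the middle one, namely producing the slc pair $(Z,\Delta_Z)$ together with the identity $\pi^*L=K_Z+\Delta_Z$. Concretely, one must verify that the conductor of the $S_2$-fication matches the coefficient-one part of the different, and — crucially — that the different $\Diff_{\bar D^n}\bar B$ is invariant under the conductor involution $\tau$, so that the gluing theorem genuinely produces an slc pair rather than merely a demi-normal scheme carrying a $\bQ$-divisor; this invariance is precisely the adjunction-theoretic compatibility between the different and the gluing data of the ambient dlt pair. The remaining inputs — nefness, the absolute abundance theorem, and universal-homeomorphism descent — are then essentially formal.
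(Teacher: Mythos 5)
Your proposal follows the paper's proof essentially step for step: pass to the $S_2$-fication $Z$ of $\Delta^{=1}$, use \autoref{lemma:reduced_boundary_is_almost_demi_normal} to get demi-normality and the universal homeomorphism property, recognize $(Z,\Diff_Z\Delta^{<1})$ as a projective slc surface with nef log canonical divisor by adjunction, apply \autoref{theorem:abundance_absolute_case}, and descend semi-ampleness along $\pi$ via the Frobenius factorization. The only difference is that you re-derive the slc structure on $Z$ through the normalization and the gluing theorem of \cite{Posva_Gluing_for_surfaces_and_threefolds}, where the paper simply invokes adjunction for the already-existing demi-normal $Z$ (the slc condition only requires the normalized pair to be lc, so the $\tau$-invariance you flag as the main obstacle is not actually needed here); this is harmless but more than is required.
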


\begin{proof}
Let $\pi\colon Z\to \Delta^{=1}$ be the $S_2$-fication. Then $Z$ is demi-normal by \Cref{lemma:reduced_boundary_is_almost_demi_normal}; by adjunction, the pair $(Z,K_Z+\Diff_Z\Delta^{<1})$ is a projective slc surface and $K_Z+\Diff_Z\Delta^{<1}=(K_X+\Delta)|_Z$ is nef. Then by \Cref{theorem:abundance_absolute_case}, $(K_X+\Delta)|_Z$ is semi-ample. By \Cref{lemma:reduced_boundary_is_almost_demi_normal}, the morphism $\pi$ is a universal homeomorphism; hence it factors a $k$-Frobenius of $Z$, and we deduce that $(K_X+\Delta)|_{\Delta^{=1}}$ is also semi-ample; see~\cite[Lemma~2.11(iii)]{Cascini_Tanaka_Relative_semi_ampleness_in_pos_char}.
\end{proof}

\subsection{Good minimal models}
\Cref{theorem:restriction_to_boundary_is_semi_ample} can be used to establish abundance for log minimal threefold pairs of maximal Kodaira dimension. We follow the arguments of~\cite{Waldron_MMP_for_3_folds_in_char_>5}, with  special attention to the hypothesis on the base-field.

\begin{theorem}\label{thm:abundance_in_dim_3}
Let $(X,\Delta)$ be a three-dimensional lc projective pair over a field $k$ of characteristic $p>5$ which is $F$-finite $($i.e.,~$[k:k^p]<\infty)$. Let $f\colon X\to S$ be a projective morphism onto a projective normal $k$-scheme satisfying $f_*\sO_X=\sO_S$. If\, $K_X+\Delta$ is $f\!$-nef and $f\!$-big, then it is $f\!$-semi-ample.
\end{theorem}

\begin{proof}
The proof of~\cite[Theorem 1.1]{Waldron_MMP_for_3_folds_in_char_>5} carries over to our situation with little changes. We only explain the necessary modifications.

The case where $S$ is the spectrum of a field is proved in~\cite[Section~6.1, Steps 1-4]{Waldron_MMP_for_3_folds_in_char_>5}. The necessary ingredients of the proof are: existence of $\bQ$-factorial crepant dlt models, some results from the MMP for three-dimensional klt pairs (cone theorem, base-point-freeness, existence of log minimal models), abundance for slc surface pairs, Keel's theorem and~\cite[Proposition~2.14(1) and Lemma~2.16]{Waldron_MMP_for_3_folds_in_char_>5}. They hold in our setup:
	\begin{enumerate}
		\item $\bQ$-factorial crepant dlt models exist in characteristic $>5$ by~\cite[Corollary~9.19 and Remark~9.20]{Bhatt&Co_MMP_for_3folds_in_mixed_char}.
		\item For three-dimensional klt pairs over $F$-finite fields of characteristic $p>5$, the cone theorem, the base-point-freeness theorem and the existence of log minimal models are all established in~\cite{Das_Waldron_MMP_for_3folds_over_imperfect_fields}.
		\item Abundance for slc surface pairs over arbitrary fields is \Cref{theorem:abundance_absolute_case}.
		\item Keel's theorem holds over any field of positive characteristic (see~\cite[Proposition~2.2]{Waldron_MMP_for_3_folds_in_char_>5}).
		\item Lemma~2.16 of \cite{Waldron_MMP_for_3_folds_in_char_>5} relies on~\cite[Theorem~1.7]{Waldron_MMP_for_3_folds_in_char_>5} and~\cite[Proposition~2.14(1)]{Waldron_MMP_for_3_folds_in_char_>5}. The last two results have analogs over fields of characteristic $p>5$ by~\cite[Theorem H]{Bhatt&Co_MMP_for_3folds_in_mixed_char} and~\cite[Theorem G]{Bhatt&Co_MMP_for_3folds_in_mixed_char}, respectively. (See~\cite[Theorem~1.1]{Das_Waldron_MMP_for_3folds_over_imperfect_fields} and~\cite[Proposition~4.10]{Das_Waldron_MMP_for_3folds_over_imperfect_fields} for the $F$-finite case.)
	\end{enumerate}
This suffices for the case where $S$ is a point. 

When $S$ is not a point, we argue as in~\cite[End of Section~6.1]{Waldron_MMP_for_3_folds_in_char_>5}. Here we use a result from~\cite{Tanaka_Semiample_perturbations_of_lc_pairs}, where it is assumed that the base-field contains an infinite perfect field. Let us explain how we can reduce to this situation, therefore concluding the proof. (See~\cite[Proof of Theorem~4.12]{Tanaka_Abundance_for_lc_surfaces_over_imperfect_fields} for a similar argument.) Let $K$ be the composite field of $k$ and $\bar{\bF}_p$. Since relative semi-ampleness descends faithfully flat morphisms, see \cite[Lemma~2.12(ii)]{Cascini_Tanaka_Relative_semi_ampleness_in_pos_char}, it is sufficient to prove that $K_X+\Delta$ is $f\!$-semi-ample after extending the scalars to $K$. Since $k\subset K$ is a separable extension, the hypotheses of the theorem survive the base-change, except possibly the $F$-finiteness of the base-field. Let $k'$ be an intermediate extension of $\bar{\bF}_p\subset K$ such that 
	\begin{enumerate}
		\item $k'$ is finitely generated over $\bar{\bF}_p$, and
		\item $(X_K,\Delta_K)$ and $f_K$ are defined over $k'$.
	\end{enumerate}
Since relative semi-ampleness is preserved by base-change, it is sufficient to prove the theorem over $k'$. The point is that $k'$ is finitely generated over $\bar{\bF}_p$, so it is $F$-finite, and therefore we can argue as in \cite[End of Section~6.1]{Waldron_MMP_for_3_folds_in_char_>5}.
\end{proof}

\begin{remark}
In \Cref{thm:abundance_in_dim_3}, the only place where the $F$-finiteness of the base-field is used is to apply the fundamental theorems of the MMP for klt threefolds proved in~\cite{Das_Waldron_MMP_for_3folds_over_imperfect_fields}. As the authors note in~\cite[Remarks 1.9 and 6.19]{Das_Waldron_MMP_for_3folds_over_imperfect_fields}, this is only assumed to ensure the existence of projective log resolutions in dimension three, which is not known yet over arbitrary fields (see~\cite[Section~2.2.1]{Das_Waldron_MMP_for_3folds_over_imperfect_fields} and~\cite[Section~2.3]{Bhatt&Co_MMP_for_3folds_in_mixed_char}). In particular, \Cref{thm:abundance_in_dim_3} holds over a field of characteristic $p>5$ if we assume the existence of projective log resolutions over that field.
\end{remark}

\newcommand{\etalchar}[1]{$^{#1}$}
\providecommand{\bysame}{\leavevmode\hbox to3em{\hrulefill}\thinspace}
\providecommand{\MR}{\relax\ifhmode\unskip\space\fi MR }
\providecommand{\MRhref}[2]{%
  \href{http://www.ams.org/mathscinet-getitem?mr=#1}{#2}
}
\providecommand{\href}[2]{#2}

\end{document}